\theoremstyle{plain}
\newtheorem{lemma}{Lemma}[section]
\newtheorem{theorem}[lemma]{Theorem}
\theoremstyle{definition}
\newtheorem{remark}[lemma]{Remark}
\newtheorem{definition}[lemma]{Definition}
\theoremstyle{remark}
\newcommand{\R}{{\mathbb R}}
\renewcommand{\Re}{\mbox{\rm Re}\,}
\renewcommand{\Im}{\mbox{\rm Im}\,}
\date{\today}
\title{Orbital stability of standing waves for the nonlinear Schr{\"o}dinger equation with attractive delta potential and double power repulsive nonlinearity}
\author{Jaime Angulo Pava\thanks{Department of Mathematics,
IME-USP, Rua do Mat\~ao 1010, Cidade Universit\'aria, CEP 05508-090,
 S\~ao Paulo, SP (Brazil). E-mail: \texttt{angulo@ime.usp.br}}\and C\'{e}sar A. Hern\'{a}ndez Melo \thanks{Department of Mathematics, DMA-UEM, Av. Colombo, 5790 Jd. Universit\'ario, CEP 87020-900, Maring\'a, PR (Brazil). E-mail: \texttt{cahmelo@uem.br}}  
\and Ram\'on G. Plaza\thanks{Instituto de Investigaciones en Matem\'aticas Aplicadas y en Sistemas, Universidad Nacional Aut\'{o}noma de M\'{e}xico, Circuito Escolar s/n, Ciudad Universitaria, C.P. 04510 Cd. de M\'{e}xico (M\'exico). E-mail: \texttt{plaza@mym.iimas.unam.mx}} }
\begin{document}

\maketitle


\begin{abstract}
In this paper, a nonlinear Schr\"odinger equation with an attractive (focusing) delta potential and a repulsive (defocusing) double power nonlinearity in one spatial dimension is considered. It is shown, via explicit construction, that both standing wave and equilibrium solutions do exist for certain parameter regimes. In addition, it is proved that both types of wave solutions are orbitally stable under the flow of the equation by minimizing the charge/energy functional.
\end{abstract}


\section{Introduction}
\label{int}

This work addresses the orbital stability of peak-standing waves associated to the following double power nonlinear Schr\"odinger (NLS) equation with a point interaction determined by Dirac $\delta$ distribution centered at the origin (henceforth NLSDP),
\begin{equation}\label{deltasch1}
iu_{t}+u_{xx}+Z\delta(x)u+\lambda_1u|u|^{p-1}+\lambda_2 u|u|^{2p-2}=0,\quad\text{for}\;\;t, x\in \mathbb{R},
\end{equation}
here $u=u(x,t)\in \mathbb C$, $1 < p < \infty$, $\lambda_1\leq 0,$ $\lambda_2< 0$ and  $Z\in \mathbb{R}$ represents the so-called strength parameter.

\todo[fancyline, color=yellow!40, size=\small]{\textbf{(B)}Write more about the physical model. Trace back references with care...}
Equation \eqref{deltasch1} belongs to a family of models featuring the competition between  repulsive ($\lambda_1\leq 0$)/attractive ($\lambda_1\geq 0$)  and repulsive ($\lambda_2\leq 0$)/attractive ($\lambda_2\geq 0$)/terms, that have drawn considerable attention for $p=3$ in both the physical and the mathematical communities in recent years (an abridged list of references include \cite{AdNo13}, \cite{AdNoV13}, \cite{Agraw5ed}, \cite{Ang12}, \cite{AnAr18}, \cite{AnGo17}, \cite{AnGo18},  \cite{AnPo13}, \cite{BKo04}, \cite{CaMiR05}, \cite{DMADDKK95}, \cite{FuJe08}, \cite{FOO08}, \cite{GeMW16},  \cite{GiDriMa04},  \cite{GHW04},  \cite{LFFKS08}, \cite{Men93}, \cite{MoNe04}, \cite{PKH11}, \cite{SaTa04} and \cite{SeCaHo05}). This combination of nonlinearities in \eqref{deltasch1} for $p>1$ is well-known in optical media (cf. \cite{BCLTSS03}, \cite{FABLS13}, \cite{FAR07}). In particular and in the context of nonlinear optics, we recall that for a effective linear
potential term, $V(x)$, the general NLS model
\[
iu_t+ u_{xx} + V(x)u +F(u)=0,
\]
represents a trapping (wave-guiding) structure for light beams induced by
an inhomogeneity of the local refractive index. In particular, the delta-function term in \eqref{deltasch1} adequately represents a narrow trap which is able to capture broad solitonic beams (see \cite{CaMiR05,GHW04}). In the description of Bose-Einstein condensates \cite{DMADDKK95,PiSt03}, the same equation (also known as the Gross-Pitaevskii equation) with a delta potential models the dynamics of a condensate in the presence of an impurity of a small length scale (cf. \cite{Kon05,SeCaHo05}). Notably, in both physical theories the most common forms of the nonlinearity $F(u)$ are either a single cubic (usually attractive) term, or the focusing cubic plus a defocusing quintic nonlinearity, modeling the interplay of multi-body boson interactions in the case of Bose-Einstein condensates (cf. \cite{Kon05}), or a combination of channel waveguides and saturable nonlinearities (the simplest among which is the cubic-quintic) in the case of nonlinear optics (see, e.g., \cite{GiDriMa04}). 

The existence of special solutions of equation \eqref{deltasch1} called ``standing wave" solutions of the form
\begin{equation}\label{stand}
u(x,t)=e^{-i\omega t} \phi_\omega(x),\;\;\;\;\;\;\;\;\omega\in I\subset \mathbb{R},
\end{equation}
where the profile $\phi_\omega:\mathbb R\to \mathbb R$ of the wave satisfies in a distributional sense the elliptic equation
\begin{equation}\label{peak}
\Big(-\frac{d^2}{dx^2}-Z\delta(x)\Big)\phi_\omega-\omega \phi_\omega-\lambda_1 \phi_\omega|\phi_\omega|^{p-1}-\lambda_2 \phi_\omega|\phi_\omega|^{2p-2}=0,
\end{equation}
 with $\phi_\omega$ belonging to the domain of formal $\delta$-interaction quantum operator $A_Z=-\partial_{xx}-Z\delta(x)$, defined as 
 \begin{equation}\label{A_Z}
\left\{
\begin{aligned}
A_Zf(x)&=-f''(x)\qquad x\neq 0,\\
D(A_Z)&=\{f\in H^1(\mathbb R)\cap H^2(\mathbb R \backslash \{0\}): f'(0+)-f'(0-)=-Zf(0)\},
\end{aligned}
\right.
\end{equation}
has been widely considered in analytic, numerical and experimental works for specific values of the parameters $Z, \omega, \lambda_1,  \lambda_2$. Indeed,  for $Z=0$ (that is, in the case of no point defects), the rigorous existence and stability analysis of standing waves for NLSDP model with general double-power nonlinearities have been studied in the works by Maeda \cite{Mae08} and Ohta \cite{Oht95}. 
 
 The existence and stability of standing waves for the model with $Z \neq 0$, $\lambda_1\neq 0$, $\lambda_2=0$, $\omega<0$, $p>1$, have been extensively discussed earlier by Fukuizumi and Jeanjean \cite{FuJe08} in the case of a repulsive delta potential; by Fukuizumi \emph{et al.} \cite{FOO08} in the attractive case; and by Le Coz \emph{et al.} \cite{LFFKS08} in the case of a repulsive defect and subcritical nonlinearity.  For this specific choice of parameters, but in a periodic framework, the reader is referred to the works of Angulo \cite{Ang12} and Angulo and Ponce \cite{AnPo13}. The case $Z\neq 0$, $p=3$ (i.e., the cubic/quintic model), $\lambda_1>0$ (attractive cubic interaction) and $\lambda_2\in\mathbb{R}\setminus\{0\}$ has been recently analyzed by the first two authors in \cite{AnHe19}. The case of an attractive delta potential with attractive cubic with repulsive quintic interactions has been recently studied by Genoud \emph{et al.} \cite{GeMW16}.

Now, it is well-known (see Lemmata \ref {nule} and \ref{cinco} below) that for arbitrary values of parameters $\lambda_1$ and  $\lambda_2$,  NSLDP model may not have standing wave solutions vanishing at infinity (still in the case $Z\neq 0$). Moreover, it may happen that exact solutions are not  available in general. Recently, Kaminaga and Ohta \cite{KaOh09} studied the stability of a family of explicit standing wave solutions for the NLSDP model with a simple power repulsive ($\lambda_1=-1, \lambda_2=0$) nonlinearity, with a focusing $\delta$-interaction ($Z>0$) and with a wave phase velocity $-\omega$ satisfying $0<-\omega<\frac{Z^2}{4}$.
 
Up to our knowledge, the existence and stability of standing waves for the NLSDP model with a double power repulsive nonlinearity has not been considered in the current literature. It is to be noticed that multi-body interactions of the same sign appear in the study of Bose-Einstein condensates (see, for example, Belobo \emph{et al.} \cite{BeBeKo14}, Brazhnyi and Konotop \cite{BKo04}, Kamchatnov and Salerno \cite{KamSa09} and Kamchatnov and Korneev \cite{KaKo10}). Therefore, the main focus of this work is to study the existence and  stability of  standing wave solutions for NLSDP model when $p>1$, with a double power repulsive or defocusing ($\lambda_1\leq 0, \lambda_2<0$) nonlinearity, with a focusing (attractive) $\delta$-interaction ($Z>0$) and with a wave phase velocity $-\omega$ satisfying 
\[
-\frac{p\lambda_1^2}{(p+1)^2\lambda_2}<-\omega<\frac{Z^2}{4}.
\]
In fact, for 
\[
\alpha=\frac{\lambda_1}{p+1},\hspace{0.6cm}\beta=\frac{\lambda_2}{p},\hspace{0.5cm}\text{and}\hspace{0.5cm}l=\frac{1}{(p-1)\sqrt{-\omega}}\sinh^{-1}\left(\frac{\alpha}{\sqrt{\omega\beta-\alpha^2}}\right),
\] 
it will be shown that the functions 
\begin{equation}\label{numeratorwith}
\phi_{\omega}(x)=\left[\frac{\alpha}{-\omega}+\frac{\sqrt{\omega\beta -\alpha^2}}{-\omega}\sinh\left((p-1)\sqrt{-\omega}\left(|x|+R_1^{-1}\left(\frac{Z}{2\sqrt{-\omega}}\right)\right)\right)\right]^{-\frac{1}{p-1}},
\end{equation}
where $R_1:(-l,\infty)\rightarrow (1,\infty)$ is the diffeomorphism  given by
\begin{equation}
\label{Er}
R_1(d)=\frac{\sqrt{\omega\beta-\alpha^2}\cosh((p-1)\sqrt{-\omega}d)}{\sqrt{\omega\beta-\alpha^2}\sinh((p-1)\sqrt{-\omega}d)+\alpha},
\end{equation}
constitute a family of standing wave profiles (solutions of the equation \eqref{peak}) for the parameter values given above. 

Moreover, we show that the standing wave solutions determined by the profile $\phi_{\omega}$ in \eqref{numeratorwith} for $\lambda_1\leqq 0, \lambda_2 <0$, are orbitally stable in $H^1(\mathbb{R)}$ (see Theorem \ref{main} below). We note that since the classical translation symmetry associated to the equation \eqref{deltasch1} when $Z=0$ does not hold in the case $Z\neq 0$, then our concept of orbital stability pertains to the phase-invariance symmetry associated to equation \eqref{deltasch1}. More precisely we have the following

\begin{definition}\label{dsta} The standing wave $e^{-i\omega t}\phi_{\omega}$ is \emph{orbitally stable} by the
flow of the NLSDP model \eqref{deltasch1} in $H^1(\mathbb{R})$, if for any $\epsilon>0$ there exists $\delta>0$ such that if $\| u_0-\phi_{\omega}\|_{H^1} <\delta$ then
\[
\inf_{\theta\in\mathbb{R}} \|u(t)-e^{i\theta}\phi_{\omega}\|_{H^1}<\epsilon, \qquad \text{for all } \, t \in \mathbb R,
\]
where $u(t)$ denotes the solution to equation  \eqref{deltasch1} with initial data $u(0)=u_0\in H^{1}(\mathbb{R})$. Otherwise, $e^{-i\omega t}\phi_{\omega}$ is said to be \emph{orbitally unstable} in $H^{1}(\mathbb{R})$.
\end{definition}

Thus, our main stability result for the peak standing waves profiles in \eqref{numeratorwith} is the following.

\begin{theorem}\label{main} 
Let $1 < p < \infty$, $\lambda_1 \leqq 0$, $\lambda_2 <0$  and  $Z>0$ in equation \eqref{deltasch1} be such that $-\frac{p\lambda_1^2}{(p+1)^2\lambda_2} < \frac{Z^2}{4}$. Then for all values of $\omega<0$ satisfying 
\[
-\frac{p\lambda_1^2}{(p+1)^2\lambda_2}<-\omega<\frac{Z^2}{4}
\]
and for the profile $\phi_{\omega}$ defined in \eqref{numeratorwith}, the standing wave solution  $e^{-i\omega t}\phi_{\omega}$ is orbitally stable by the flow of the NLSDP model \eqref{deltasch1} in $H^1(\mathbb{R})$.
\end{theorem}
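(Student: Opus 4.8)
The plan is to exploit the purely defocusing character of the nonlinearity, $\lambda_1\le 0$ and $\lambda_2<0$, which forces the Hessian of the action at $\phi_\omega$ to be nonnegative and reduces the whole proof to a coercivity-plus-conservation (Lyapunov) argument, with no need for a Vakhitov--Kolokolov slope condition. First I would fix the variational framework: the NLSDP flow \eqref{deltasch1} is globally well posed in the energy space $H^1(\mathbb{R})$ (the form domain of $A_Z$) and conserves the charge $Q(u)=\tfrac12\|u\|_{L^2}^2$ together with the energy
\[
E(u)=\tfrac12\|u_x\|_{L^2}^2-\tfrac{Z}{2}|u(0)|^2-\tfrac{\lambda_1}{p+1}\|u\|_{L^{p+1}}^{p+1}-\tfrac{\lambda_2}{2p}\|u\|_{L^{2p}}^{2p}.
\]
The profile $\phi_\omega$ is a critical point of the action $S_\omega=E-\omega Q$, since $S_\omega'(\phi_\omega)=0$ is exactly the stationary equation \eqref{peak}. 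Because the point interaction destroys translation invariance, the only surviving continuous symmetry is the gauge rotation, whose generator at $\phi_\omega$ is $i\phi_\omega$; thus the goal becomes to show that $S_\omega''(\phi_\omega)$ is coercive transversally to $\mathrm{span}\{i\phi_\omega\}$.

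The heart of the argument is the spectral analysis. Splitting a perturbation into real and imaginary parts produces the two self-adjoint operators on $L^2(\mathbb{R})$ with domain $D(A_Z)$,
\[
L_+ = A_Z-\omega-\lambda_1 p\,\phi_\omega^{p-1}-\lambda_2(2p-1)\phi_\omega^{2p-2},\qquad L_- = A_Z-\omega-\lambda_1 \phi_\omega^{p-1}-\lambda_2\phi_\omega^{2p-2},
\]
and \eqref{peak} gives $L_-\phi_\omega=0$. Since $\phi_\omega>0$ and the attractive point-interaction Schr\"odinger semigroup is positivity improving, $\phi_\omega$ is the simple ground state of $L_-$ (a positive eigenfunction cannot be orthogonal to the nodeless ground state), whence $L_-\ge 0$ with $\ker L_-=\mathrm{span}\{\phi_\omega\}$. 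The key algebraic observation is that $L_+=L_-+W$ with $W=-\lambda_1(p-1)\phi_\omega^{p-1}-\lambda_2(2p-2)\phi_\omega^{2p-2}>0$, which is precisely where $\lambda_1\le 0,\ \lambda_2<0$ enter; hence $L_+\ge L_-\ge 0$, and $L_+\phi_\omega=W\phi_\omega\ne 0$ forces $\ker L_+=\{0\}$. To upgrade nonnegativity to coercivity I would use that the multiplicative potentials decay exponentially (as $\phi_\omega$ does), hence are relatively compact with respect to $A_Z$, so that $\sigma_{\mathrm{ess}}(L_\pm)=[-\omega,\infty)\subset(0,\infty)$ because $-\omega>0$. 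Therefore $0$ sits in a spectral gap: $L_+\ge c_1>0$ on all of $H^1(\mathbb{R})$, while $L_-\ge c_2>0$ on $\{\phi_\omega\}^\perp$. This yields the transversal estimate $\langle S_\omega''(\phi_\omega)v,v\rangle\ge c\|v\|_{H^1}^2$ for every $v\perp i\phi_\omega$; note that the charge-changing direction $\phi_\omega$ is itself controlled here, since $\langle L_+\phi_\omega,\phi_\omega\rangle=\int W\phi_\omega^2>0$, so no charge constraint or slope computation is required.

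Finally I would close with the standard modulation/Lyapunov argument. For $u$ in a small tube around the orbit $\{e^{i\theta}\phi_\omega\}$, choose $\theta=\theta(u)$ minimizing $\|u-e^{i\theta}\phi_\omega\|_{H^1}$; the minimization provides orthogonality to the symmetry direction, and a Taylor expansion of the conserved functional $S_\omega$ about $e^{i\theta}\phi_\omega$, using $S_\omega'(e^{i\theta}\phi_\omega)=0$ and the coercivity above, gives
\[
S_\omega(u)-S_\omega(\phi_\omega)\ge c\,\rho(u)^2+o(\rho(u)^2),\qquad \rho(u):=\inf_{\theta}\|u-e^{i\theta}\phi_\omega\|_{H^1}.
\]
Since $S_\omega$ is conserved along the flow and $S_\omega(u_0)-S_\omega(\phi_\omega)\to 0$ as $\rho(u_0)\to 0$ by continuity of $E$ and $Q$, a bootstrap/continuity argument keeps the solution inside the tube and shows that $\rho(u(t))$ stays as small as desired for all $t$, which is exactly the orbital stability of Definition \ref{dsta}.

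I expect the main obstacle to be the spectral step carried out with the modified domain: rigorously justifying that $\phi_\omega$ is the simple, positive ground state of $L_-$, that the multiplicative perturbations are relatively $A_Z$-compact, and that $\sigma_{\mathrm{ess}}(L_\pm)$ is located in $(0,\infty)$, all of which must be established through the transmission condition $f'(0+)-f'(0-)=-Zf(0)$ rather than via classical $H^2$ Sturm--Liouville theory. Once these facts are in place, the defocusing signs make $L_+\ge L_-\ge 0$ essentially automatic and the remainder of the proof is routine.
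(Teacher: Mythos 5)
Your proposal is sound in outline, but it follows a genuinely different route from the paper. The paper proves Theorem \ref{main} by the Cazenave--Lions variational method: it minimizes the conserved action $G_\omega$ of \eqref{ge} over \emph{all} of $H^1(\mathbb{R})$ --- an unconstrained minimization that is possible precisely because the defocusing signs make $G_\omega$ bounded below (Lemmata \ref{lemaux} and \ref{siete}) --- shows that every minimizing sequence converges strongly, via weak convergence, the Br\'ezis--Lieb lemma and the locally compact embedding giving $h_{n_j}(0)\to h(0)$ (Lemma \ref{ocho}; the $\delta$-term pins minimizing sequences at the origin, so no concentration-compactness dichotomy is ever invoked), identifies the minimizer set with the orbit $\{e^{i\theta}\phi_\omega\}$ through the ODE uniqueness result Lemma \ref{seis}, and concludes by the standard contradiction argument. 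You instead localize the defocusing structure at the level of the Hessian: $L_+=L_-+W$ with $W=-(p-1)\lambda_1\phi_\omega^{p-1}-2(p-1)\lambda_2\phi_\omega^{2p-2}>0$, so that $L_+>0$ and neither a Vakhitov--Kolokolov slope condition nor a charge constraint is needed --- the same structural fact ($\lambda_1\le 0$, $\lambda_2<0$) that makes the paper's unconstrained minimization coercive. What each buys: the paper's route needs no spectral theory, no modulation analysis, and only $C^1$ regularity of the functional, but it requires the global ingredients (boundedness below, strong convergence of minimizing sequences, and the delicate uniqueness of positive solutions through the jump condition); your route is local and quantitative (it yields an explicit coercivity constant and, notably, does not require uniqueness of the profile at all), but you must carry out the operator-theoretic steps you honestly flag --- the positivity-improving property of $e^{-tA_Z}$, relative $A_Z$-compactness of the exponentially decaying potentials, and the location $\sigma_{\mathrm{ess}}(L_\pm)=[-\omega,\infty)\subset(0,\infty)$ --- through the transmission condition $f'(0+)-f'(0-)=-Zf(0)$; all of these are indeed available for $Z>0$ since the resolvent and heat kernels of $A_Z$ are explicit.

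Two small repairs are needed. First, your inference ``$L_+\phi_\omega=W\phi_\omega\neq 0$ forces $\ker L_+=\{0\}$'' is incomplete as written: $L_+\phi_\omega\neq 0$ only shows $\phi_\omega\notin\ker L_+$. The correct one-line argument: if $L_+v=0$, then $0=\langle L_-v,v\rangle+\int W|v|^2\,dx$ with both terms nonnegative, so $v\in\ker L_-=\mathrm{span}\{\phi_\omega\}$ and $\int W|v|^2\,dx=0$, which together force $v=0$. Second, minimizing $\theta\mapsto\|u-e^{i\theta}\phi_\omega\|_{H^1}$ yields orthogonality to $ie^{i\theta}\phi_\omega$ in the $H^1$ inner product, whereas the coercivity of $L_-$ you invoke holds on the $L^2$-orthogonal complement of $\phi_\omega$; you should either modulate via the implicit function theorem to enforce exact $L^2$-orthogonality of the imaginary part, or adapt the coercivity estimate to the $H^1$-orthogonality condition --- a routine but necessary adjustment before the Taylor expansion closes the argument.
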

\begin{remark}
(a) Theorem \ref{main} generalizes the stability result established by Kaminaga and Ohta (see Theorem 1 in  \cite{KaOh09}). Indeed, if we take $p=\frac{r+1}{2}>1$, $\lambda_1=0$, $\lambda_2=-1$ in the equation \eqref{deltasch1} and formula \eqref{numeratorwith} then we deduce that
the Schr\" odinger equation
\[
iu_t+u_{xx}+Z\delta(x)u-|u|^{r-1}u=0,
\]   
has stable standing wave solutions given by 
\begin{equation}\label{solKamiMas}
u(x,t)=e^{-i\omega t}\left[\frac{-\omega(r+1)}{2}\right]^{\frac{1}{r-1}}\left[\sinh\left(\frac{(r-1)\sqrt{-\omega}}{2}|x|+\tanh^{-1}\left(\frac{2\sqrt{-\omega}}{Z}\right)\right)\right]^{-\frac{2}{r-1}},
\end{equation}
providing  the parameters $\omega, Z$ satisfy $Z>0$ and $0<-\omega<Z^2/4$.\\

\noindent (b) Lemma \ref{nule} below shows that in the case of parameter values $Z>0$ and $ \omega+ Z^2/4\leq 0$ there are no non-trivial solutions to \eqref{peak}. If $Z\in \mathbb R\setminus \{0\}$ and $ \omega> 0$ then there are no non-trivial solutions to \eqref{peak} either (see Lemma \ref{cinco} below). For $Z<0$, from Lemma \ref{Z<0} below, we are no non-trivial solutions to \eqref{peak} in the cases of  $\lambda_1, \lambda_2<0$.
\end{remark}

Our second focus of attention is the existence and stability of equilibrium solutions for NLSDP of the form
\[
u(x,t) = \phi_0(x),
\]
which are particular case of standing waves \eqref{stand} with phase velocity  $\omega = 0$. In fact, we establish that, for $1 < p < 5$, $Z > 0$ and double power repulsive nonlinearities with $\lambda_1 < 0$ and $\lambda_2 < 0$, the $L^2(\mathbb R)$-rational profile 
\begin{equation}
\label{phi0}
\phi_0(x)=\left[\frac{-2p(p+1)\lambda_1}{p(p-1)^2\lambda_1^2\left(|x|+R_2^{-1}\left(\frac{Z}{4}\right)\right)^2+(p+1)^2\lambda_2} \right]^{\frac{1}{p-1}},
\end{equation}
is an equilibrium solution to equation \eqref{deltasch1} with $\omega = 0$ and for the parameter values under consideration.  Here $R_2 : (-l_0, \infty) \to (0, \infty)$ is the diffeomorphism defined by
\[
R_2(d)=\frac{d}{(p-1)(d^2 - l_0^2)},
\]
with
\[
l_0 :=\frac{\sqrt{|\lambda_2|}(p+1)}{\sqrt{p}(p-1)\lambda_1} < 0.
\]

Our stability result associated to the profiles in \eqref{phi0}  is the following

\begin{theorem}
\label{main2}
Let $1 < p < 5$, $\lambda_1 < 0$, $\lambda_2 < 0$ and $Z > 0$. Then the equilibrium solution $u(x,t) = \phi_0(x)$ to the NLSDP model \eqref{deltasch1}, with $\omega = 0$ in \eqref{peak} and  $\phi_0$ defined in \eqref{phi0}, is orbitally stable in $H^1(\R)$.
\end{theorem}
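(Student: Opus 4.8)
The plan is to exhibit $\phi_0$ as a constrained minimizer of the energy at a distinguished (critical) charge and to deduce orbital stability from the precompactness of minimizing sequences; the single genuine difficulty is the absence of a spectral gap at $\omega=0$. Equation \eqref{deltasch1} admits the conserved charge $Q(u)=\tfrac12\|u\|_{L^2}^2$ and energy
\[
E(u)=\tfrac12\langle A_Z u,u\rangle-\frac{\lambda_1}{p+1}\int_{\R}|u|^{p+1}\,dx-\frac{\lambda_2}{2p}\int_{\R}|u|^{2p}\,dx,\qquad \langle A_Z u,u\rangle=\int_{\R}|u_x|^2\,dx-Z|u(0)|^2,
\]
and since $\lambda_1,\lambda_2<0$ the two nonlinear integrals enter with a favorable positive sign. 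Setting $\omega=0$ in \eqref{peak} shows $E'(\phi_0)=0$, so $\phi_0$ solves the Euler--Lagrange equation of $\mathcal I_c:=\inf\{E(u):Q(u)=c\}$ with vanishing Lagrange multiplier at the charge $c_0:=Q(\phi_0)$; equivalently, $c_0$ is the critical charge at which $\mathcal I_c$ is stationary. After recording the $H^1$ well-posedness of \eqref{deltasch1} and conservation of $E,Q$, I would set up this constrained problem.

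Next I would determine the geometry of the second variation, which explains why $\phi_0$ should minimize. Writing $u=\phi_0+v+iw$ with $v,w$ real gives $E''(\phi_0)=\mathrm{diag}(L_+,L_-)$ with
\[
L_-=A_Z-\lambda_1\phi_0^{p-1}-\lambda_2\phi_0^{2p-2},\qquad L_+=A_Z-p\lambda_1\phi_0^{p-1}-(2p-1)\lambda_2\phi_0^{2p-2}.
\]
The profile equation yields $L_-\phi_0=0$, and since $\phi_0>0$ is nodeless it is the ground state, so $L_-\ge0$ with $\ker L_-=\mathrm{span}\{\phi_0\}$ (the phase direction). For $L_+$ I would use the pointwise identity $L_+-L_-=(1-p)\bigl[\lambda_1\phi_0^{p-1}+2\lambda_2\phi_0^{2p-2}\bigr]$, which is strictly positive because $p>1$ and $\lambda_1,\lambda_2<0$; combined with $L_-\ge0$ this gives $\langle L_+v,v\rangle\ge\int(L_+-L_-)|v|^2>0$ for every $v\neq0$, so $L_+$ is a positive-definite form with trivial kernel. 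Thus the Hessian of $E$ at $\phi_0$ is positive semidefinite, degenerate only along the phase orbit $\{e^{i\theta}\phi_0\}$, and in particular there is \emph{no} negative direction to be compensated by the charge, so no convexity hypothesis on $c\mapsto\mathcal I_c$ is needed.

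The main obstacle is that, because $\omega=0$, the potentials $\phi_0^{p-1},\phi_0^{2p-2}$ decay only polynomially (recall $\phi_0(x)\sim|x|^{-2/(p-1)}$, which lies in $L^2$ precisely when $p<5$), so $0=\inf\sigma_{\mathrm{ess}}(L_\pm)$: the strict positivity $\langle L_+v,v\rangle>0$ does \emph{not} upgrade to a coercive bound $\langle L_+v,v\rangle\ge\delta\|v\|_{H^1}^2$, and the textbook Lyapunov argument fails. I would repair this by a direct concentration--compactness analysis of $\mathcal I_{c_0}$. Boundedness below on the charge sphere follows from the spectral bound $\langle A_Zu,u\rangle\ge-\tfrac{Z^2}{4}\|u\|_{L^2}^2$ (the single negative eigenvalue of $A_Z$) together with the positive defocusing terms and the subcritical Gagliardo--Nirenberg inequality available for $p<5$. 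Vanishing is excluded because $\mathcal I_{c_0}<0$, the binding energy supplied by the attractive $\delta$-interaction. Dichotomy is excluded by strict subadditivity: mass escaping the origin no longer feels the $\delta$ and, under a purely defocusing nonlinearity, contributes nonnegative energy with free infimum $0$, so any split $c_0=c_1+c_2$ obeys $\mathcal I_{c_0}<\mathcal I_{c_1}+\mathcal I^{\mathrm{free}}_{c_2}=\mathcal I_{c_1}$, reflecting that detaching charge costs at least its binding energy. Since translation invariance is broken by the $\delta$ potential, minimizing sequences concentrate at the origin with no surviving translation parameter, and I would conclude precompactness in $H^1$ modulo phase.

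Finally I would identify the minimizer and close the stability argument. Any minimizer $\psi$ satisfies $E'(\psi)=\omega\psi$; the choice $c=c_0$ forces $\omega=0$, and uniqueness of the positive $L^2$-solution of \eqref{peak} with $\omega=0$ (ODE uniqueness on $\R\setminus\{0\}$ matched through the jump condition $f'(0+)-f'(0-)=-Zf(0)$ defining $D(A_Z)$) identifies $\psi=e^{i\theta}\phi_0$. Orbital stability then follows by the standard contradiction: were it to fail, there would be data $u_{0,n}\to\phi_0$ in $H^1$ whose solutions first exit an $\epsilon$-neighborhood of $\{e^{i\theta}\phi_0\}$ at times $t_n$; by conservation of $E$ and $Q$ the functions $u_n(t_n)$ form a minimizing sequence for $\mathcal I_{c_0}$, which by the compactness just established converges in $H^1$, along a subsequence, to some $e^{i\theta}\phi_0$, contradicting the exit. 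The nonexistence Lemmata \ref{nule}, \ref{cinco} and \ref{Z<0} ensure that in the regime $Z>0$, $\lambda_1,\lambda_2<0$, $\omega=0$ no competing profiles interfere with this variational characterization.
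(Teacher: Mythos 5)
Your overall architecture (variational characterization, compactness of minimizing sequences, uniqueness up to phase, contradiction) is sound, and several ingredients are correct — the $p<5$ restriction as an $L^2$ condition on $\phi_0$, the identity $L_+-L_-=(1-p)\bigl[\lambda_1\phi_0^{p-1}+2\lambda_2\phi_0^{2p-2}\bigr]>0$, the observation that no spectral gap is available at $\omega=0$. But your decisive compactness step has a genuine gap. You minimize $E$ on the charge sphere $Q(u)=c_0$ and exclude dichotomy by asserting strict subadditivity, $\mathcal{I}_{c_0}<\mathcal{I}_{c_1}$ for every splitting $c_0=c_1+c_2$ with $c_2>0$, justified only by the heuristic that detached mass forfeits its binding energy. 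In the defocusing regime this is exactly the step that standard arguments do not deliver: the usual scaling proof of strict monotonicity fails, since for $\theta>1$
\[
E(\theta u)=\theta^2E(u)+\bigl(\theta^{p+1}-\theta^2\bigr)\frac{|\lambda_1|}{p+1}\|u\|_{L^{p+1}}^{p+1}+\bigl(\theta^{2p}-\theta^2\bigr)\frac{|\lambda_2|}{2p}\|u\|_{L^{2p}}^{2p}>\theta^2E(u),
\]
the extra terms carrying the unfavorable sign. Worse, $c_0=Q(\phi_0)$ is precisely the critical (maximal bound) mass: since $\phi_0$ is in fact a \emph{global} minimizer of $E$, one gets $\mathcal{I}_c=E(\phi_0)$ for all $c\ge c_0$ (append distant, slowly spreading mass at vanishing energy cost), and the infimum is \emph{not} attained for $c>c_0$, because an attaining $\psi$ would be a global minimizer and hence $e^{i\theta}\phi_0$ by uniqueness, of charge $c_0\neq c$. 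So dichotomy genuinely occurs immediately above your constraint value; your subadditivity inequality at $c_0$ is true but borderline, and the only proofs of it I can see already use the unconstrained compactness — as written, the step is missing or circular. A second concrete hole: you claim the Lagrange multiplier must vanish at $c=c_0$, invoking Lemmata \ref{nule}, \ref{cinco} and \ref{Z<0}; those exclude only $\omega\le-Z^2/4$, $\omega>0$ and $Z<0$, leaving the whole interval $(-Z^2/4,0)$ unexcluded — part of which, namely $\omega\in(-Z^2/4,\,\omega_*)$ with $\omega_*=p\lambda_1^2/((p+1)^2\lambda_2)<0$, carries the explicit bound states $\phi_\omega$ of Theorem \ref{solution}, which could a priori have charge $c_0$. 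Ruling them out requires an energy or mass comparison you never make. (Also, proving $\mathcal{I}_{c_0}<0$ is not the small-amplitude test available in the unconstrained problem, since the constraint fixes the amplitude; one needs $E(\phi_0)<0$ itself, obtainable from the Pohozaev-type identities \eqref{1a}--\eqref{1b} with $\omega=0$.)

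For contrast, the paper sidesteps all of this by minimizing $E$ \emph{without constraint} over the reflexive space $X=\{v\in L^{p+1}(\R)\cap L^{2p}(\R):v_x\in L^2(\R)\}$ — the enlarged space is needed precisely because at $\omega=0$ the functional no longer controls $\|v\|_{L^2}$. Unconstrained, the weak limit of a minimizing sequence is automatically admissible, so no splitting analysis is required: weak lower semicontinuity together with $h_{n_j}(0)\to h(0)$ and the Br\'ezis--Lieb lemma give strong convergence in $X$ (Lemma \ref{lemeight}); uniqueness (Lemma \ref{lemB}) identifies the set of minimizers with $\{e^{i\theta}\phi_0\}$; and charge conservation enters only once, at the end, to upgrade $X$-convergence to $H^1$-convergence via weak lower semicontinuity plus convergence of $L^2$ norms (Lemma \ref{lemnine}). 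If you wish to keep the constrained Cazenave--Lions route, you must prove the strict inequality at $c_0$ directly, and additionally note that your terminal sequence $u_n(t_n)$ has charge only \emph{converging} to $c_0$, so it must be rescaled onto the sphere before invoking compactness. Your Hessian analysis of $L_\pm$, while correct, plays no role in either argument once the variational machinery is in place.
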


Next, we describe our strategy for proving Theorems \ref{main} and \ref{main2}. Our approach is of variational type and based on the general concentration-compactness method introduced by Cazenave and Lions \cite{CaLi82} (see also \cite{KaOh09}). In the case where $\omega \leq 0$, the principal functional is the following charge/energy conserved action 
\begin{equation}\label{ge}
G_{\omega}(v)=\frac{1}{2}\|v_x \|^2_{L^2}-\frac{Z}{2}|v(0)|^{2}-\frac{\omega}{2}\|v\|^2_{L^2}-\frac{\lambda_1}{p+1}\|v\|^{p+1}_{L^{p+1}}-\frac{\lambda_2}{2p}\|v\|^{2p}_{L^{2p}}.
\end{equation}
Our analysis of existence and stability of standing wave is divided into the following steps:
\begin{enumerate}
\item[(i)]\label{cp} {\it The Cauchy problem}: The initial value problem associated to the NLSDP equation \eqref{deltasch1} is globally well-posed in $H^1(\mathbb R)$ for $1<p<+\infty$, $\lambda_1\leq 0$, $\lambda_2<0$ and $Z>0$.

\item[(ii)]\label{sp} {\it The stationary problem}: The set ${\cal A}_{\omega}$ of non-trivial solutions of equation \eqref{peak} in $H^1(\mathbb{R})$ will be  characterized, via uniqueness, by
\begin{equation}\label{Stp}
{\cal A}_{\omega}=\{v:G'_{\omega}(v)=0, v\neq0\}=\{e^{i\theta}\phi_{\omega}: \theta\in\mathbb{R}\}.
\end{equation}    
Here, the parameters $p, \lambda_1, \lambda_2, Z$ and  $\omega$ appearing in equation \eqref{peak} satisfy the assumptions of Theorem \ref{main}. The function $\phi_{\omega}$ denotes the standing wave profile given in \eqref{numeratorwith}.
\item[(iii)]\label{mp} {\it The minimization problem}: For $p, \lambda_1, \lambda_2, Z$ and  $\omega$ satisfying  the assumptions of Theorem \ref{main}, the  quantity 
\[
m(\omega)=\inf \{G_{\omega}(v):v\in H^1(\mathbb{R})\},
\] 
satisfies the following properties:
\begin{enumerate}
\item $-\infty<m(\omega)<0$; and,
\item any sequence $h_n\in H^1(\mathbb{R})$ such that $\lim_{n\to\infty}G_{\omega}(h_n)=m(\omega)$ admits a subsequence converging to some $h\in H^{1}(\mathbb{R})$ with $G_{\omega}(h)=m(\omega)$.
\end{enumerate}
\end{enumerate}
Property (ii) implies that the solution to equation \eqref{peak} is unique modulo phase-invariance symmetries and the sign of the profile (see Lemma \ref{seis} below). If properties (a) and (b) in (iii) are satisfied  then is possible to prove that the set of minimizers, $M(\omega)=\{u\in H^1(\mathbb{R}):G_{\omega}(u)=m(\omega)\}$, satisfies
\[
M(\omega)={\cal A }_{\omega},
\]
that is, $M(\omega)$ coincides with the set of non-trivial critical points of the functional $G_{\omega}$ in \eqref{ge}, as well as with the orbit generated by the standing wave profile $\phi_{\omega}$. This allows us, in turn, to prove the stability  Theorem \ref{main} (see section \ref{secstab} below). The proof of Theorem \ref{main2}, case $\omega=0$, follows the same guidelines.
 
\subsection*{Plan of the paper} This paper is organized as follows. Section \ref{secexist} is devoted to establish local and global well-posedness of the Cauchy problem for the NLSDP model \eqref{deltasch1}. Section \ref{secstanding} contains the general construction of the profile $\phi_{\omega}$ in \eqref{numeratorwith} for $\lambda_2<0$ and $\lambda_1 \leq 0$, as well as the construction of the explicit equilibrium solution $\phi_0$ in \eqref{phi0} for $\lambda_1 <0$, $\lambda_2 < 0$. Section \ref{secstab} describes the set of non-trivial critical points of the charge/energy functional in a general setting and contains the proof of uniqueness of solutions for \eqref{peak} (modulo rotations). It also contains the proof of orbital stability of the standing waves \eqref{numeratorwith} with $\omega \neq 0$ (Theorem \ref{main}), as well as of that for rational-equilibrium solutions \eqref{phi0} (Theorem \ref{main2}).
 
 \subsection*{Notation} By $\|\cdot\|_{L^p}$ we denote the norm in $L^p(\R)$, except where it is explicitly stated otherwise. The inner product in  $L^2(\mathbb R)$  is defined by   $\langle f,g\rangle= \Re \int_{-\infty}^{+\infty} f(x)\overline{g(x)}dx$. According to custom, standard Sobolev spaces are denoted by $H^k(\R),\,\,k\in \mathbb N$.
 
\section[Local and global well posedness]{Local and global well-posedness of the NLSDP model}
\label{secexist}

In this section we establish the local and global well-posedness of the Cauchy problem associated to the NLSDP equation in $H^1(\mathbb{R})$, namely
\begin{equation}\label{cachy12}
\left\{
\begin{array}{lll}
\displaystyle iu_{t}-A_Zu+u(\lambda_1 |u|^{p-1}+\lambda_2|u|^{2p-2})=0,\\
u(0)=u_0\in H^1(\mathbb{R}),
\end{array}
\right.
\end{equation}
where $A_Z$ represents the formal $\delta$-point interaction operator, $A_Z:=-\frac{d^2}{dx^2}-Z\delta(x)$, which is defined in \eqref{A_Z}. We recall that this formal expression represents  all the self-adjoint extensions associated to  the following closed, symmetric, densely defined linear operator (see \cite{AGHH2ed}): 
\begin{equation*}
\left\{
\begin{aligned}
A_0&=-\frac{d^2}{dx^2}\\
D(A_0)&=\{g\in H^2(\mathbb{R}): g(0)=0 \}.
\end{aligned}
\right.
\end{equation*}

Upon application of the  First Representation Form Theorem (cf. Kato \cite{Kato80}, chapter 6), it is possible to show that the associated form to $A_Z$ is given by
\begin{equation}\label{quadra}
 F_Z[u,v] = \Re\int_{-\infty}^{+\infty} u'(x)\overline{v'(x)}dx - Z \, \Re (u(0)\overline{v(0)}),
\end{equation}
where $(u,v)\in D( F_Z)=H^1(\mathbb R)\times H^1(\mathbb R)$. The bilinear  form defined above is closed and bounded below. In addition, operator $A_Z=-\frac{d^2}{dx^2}-Z\delta(x)$ can be extended as a linear bounded operator $u\to A_Z u$ from $H^1(\mathbb R)$ to $H^{-1}(\mathbb R)$. Indeed, this action is defined by
\begin{equation}\label{quadra2}
\langle A_Z u, v\rangle= F_Z[u,v],\qquad\text{for}\;\;u, v\in H^1(\mathbb R).
\end{equation}

Since our approach is based on the abstract results by Cazenave (see \cite{Caz03}, chapter 3), we first establish the following spectral properties of $A_Z $. Indeed, for $Z\in \mathbb R$ we have that the essential spectrum of $A_Z $,  $\Sigma_{\mathrm{ess}}(A_Z)$, is the nonnegative real axis, $\Sigma_{\mathrm{ess}}(A_Z)=[0,+\infty)$. For $Z>0$, $A_Z $ has exactly one negative, simple eigenvalue, i.e., its discrete spectrum, $\Sigma_{\mathrm{dis}}(A_Z )$, is $\Sigma_{\mathrm{dis}}(A_Z )=\{{-Z^2/4}\}$, with a strictly (normalized) eigenfunction $
 \Psi_Z(x)=\sqrt{\frac{Z}{2}}e^{-\frac{Z}{2}|x|}$. Thus,
\[
 \inf\Big\{\|v_x\|_{L^2}^2-Z|v(0)|^2: \|v \|_{L^2}=1, Z>0\Big \}=-\frac{Z^2}{4}.
\]
 
For $Z\leq 0$,  $A_Z $ has not discrete spectrum, $\Sigma_{\mathrm{dis}}(A_Z )=\varnothing$. Therefore the
operators $A_Z$ are bounded from below, more precisely,
\begin{equation}\label{boundbelo}
\begin{cases}
A_Z \geq -Z^2/4, & Z>0,\\A_Z \geq 0, & Z< 0
\end{cases}
\end{equation}
(see \cite{AGHH2ed}, chapter I.3, for further information).

\begin{theorem}[local well-posedness]
\label{cazi} 
For any $u_0\in H^1(\mathbb{R})$ and $Z\in \mathbb R$, there exists $T>0$ and a unique solution  $u\in C([-T,T]; H^1(\mathbb{R}))\cap C^1([-T,T]; H^{-1} (\mathbb{R}))$ to the Cauchy problem \eqref{cachy12} with  $u(0)=u_0$ such that 
\begin{equation}\label{blow}
\lim_{t\to T^{-}}\|u(t)\|_{H^1}=+ \infty,\hspace{0.5cm}\text{if } T<\infty.  
\end{equation}
For each $T_0\in (0,T)$ the mapping $u_0\in H^1(\mathbb{R}) \to u\in C([-T_0,T_0]; H^1(\mathbb{R}))$ 
is continuous. Moreover, the solution $u(t)$ satisfies conservation of charge and energy:
\begin{equation}\label{fi}
\|u(t)\|_{L^2}=\|u_0\|_{L^2}, \qquad E(u(t))=E(u_0),
\end{equation}
for all $t\in[-T,T]$, where the energy functional $E$ is defined as
\begin{equation}\label{Energy}
E(v) := \frac{1}{2}\|v_x \|^2_{L^2}-\frac{Z}{2}|v(0)|^{2}-\frac{\lambda_1}{p+1}\|v\|^{p+1}_{L^{p+1}}
-\frac{\lambda_2}{2p}\|v\|^{2p}_{L^{2p}}, \qquad v\in H^1(\mathbb{R}).
\end{equation}
\end{theorem}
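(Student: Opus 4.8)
The plan is to recast the Cauchy problem \eqref{cachy12} in the abstract semilinear form $iu_t = A_Z u + g(u)$, with $g(u) = -\lambda_1|u|^{p-1}u - \lambda_2|u|^{2p-2}u$, and then to invoke the abstract local well-posedness theory developed by Cazenave \cite[Chapter 3]{Caz03}. The hypotheses of that theory require a self-adjoint operator that is bounded from below and whose form domain is a Hilbert space $X_A$ continuously embedded in $L^2(\mathbb{R})$, together with a nonlinearity $g$ that is the $L^2$-gradient of a $C^1$ functional $G$ and that is Lipschitz continuous on bounded sets as a map $X_A \to X_A^*$. The strategy is to check each of these structural conditions and to read off the conclusions of the abstract theorem.

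First I would verify the linear hypotheses. By the First Representation Theorem, $A_Z$ is the self-adjoint operator associated with the closed, bounded-below quadratic form $F_Z$ in \eqref{quadra}, whose form domain is exactly $H^1(\mathbb{R})$; hence $X_A = H^1(\mathbb{R})$ and $X_A^* = H^{-1}(\mathbb{R})$. The lower bound \eqref{boundbelo} shows that $A_Z + M$ is positive for $M$ large enough, so by Stone's theorem $-iA_Z$ generates a unitary group $\{e^{-itA_Z}\}$ on $L^2(\mathbb{R})$ that extends to a strongly continuous group on $H^1(\mathbb{R})$ and $H^{-1}(\mathbb{R})$.

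Next I would verify the nonlinear hypotheses. Setting $G(u) = -\tfrac{\lambda_1}{p+1}\|u\|_{L^{p+1}}^{p+1} - \tfrac{\lambda_2}{2p}\|u\|_{L^{2p}}^{2p}$, one checks that $g = G'$ is the $L^2$-gradient of a $C^1$ functional. The crucial simplification in one spatial dimension is the Sobolev embedding $H^1(\mathbb{R}) \hookrightarrow L^q(\mathbb{R})$ for every $q \in [2,\infty]$; this controls both power nonlinearities for all $p > 1$ without restriction, and it yields that $g$ sends bounded sets of $H^1(\mathbb{R})$ into bounded sets of $H^{-1}(\mathbb{R})$ and is Lipschitz on bounded sets as a map $H^1(\mathbb{R}) \to H^{-1}(\mathbb{R})$, which are precisely the conditions required by the abstract framework.

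With both sets of hypotheses in place, the abstract theorem furnishes a unique maximal solution $u \in C([-T,T];H^1)\cap C^1([-T,T];H^{-1})$, the blow-up alternative \eqref{blow}, and continuous dependence on the initial datum. The conservation laws \eqref{fi} then follow from the gauge invariance of $g$, which gives conservation of the $L^2$ charge, and from the Hamiltonian structure $iu_t = E'(u)$ together with the identity $\Re\langle w, iw\rangle = 0$, which gives conservation of the energy $E$; both are made rigorous through the regularization argument built into the abstract theory. The main obstacle I anticipate is purely at the linear level: one must confirm that the singular $\delta$-interaction still produces a form domain equal to the full $H^1(\mathbb{R})$ and a bounded-below operator, so that Cazenave's machinery applies verbatim despite the nonstandard transmission condition $f'(0+)-f'(0-)=-Zf(0)$ in \eqref{A_Z}. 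Once the spectral facts recorded above are used, the remaining nonlinear estimates are routine thanks to the favorable one-dimensional embeddings.
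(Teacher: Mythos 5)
Your proposal is correct and follows essentially the same route as the paper: both recast \eqref{cachy12} in the abstract semilinear framework of Cazenave \cite[Chapter 3]{Caz03} (the paper cites Theorem 3.7.1 specifically), using the lower bound \eqref{boundbelo} to shift $A_Z$ into a nonpositive self-adjoint operator whose form domain is $H^1(\mathbb{R})$ with an equivalent norm, and checking the structural hypotheses on the nonlinearity via the one-dimensional Sobolev embeddings, with existence, uniqueness, the blow-up alternative \eqref{blow}, continuous dependence, and the conservation laws \eqref{fi} all read off from the abstract theorem. The only cosmetic difference is that you spell out the gauge-invariance and Hamiltonian-structure mechanisms behind \eqref{fi}, whereas the paper simply notes that Cazenave's conditions (3.7.1)--(3.7.6) hold with $r=\rho=2$.
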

\begin{proof}
The proof of this theorem is a direct application of Theorem 3.7.1 in \cite{Caz03}. In fact, from \eqref{boundbelo}, we have the self-adjoint operator ${\cal A}\equiv -A_{Z}-\beta$ on the space $X=L^2(\mathbb{R})$, with $\beta =Z^2/4$ for $Z>0$ and  $\beta =0$ for $Z\leq 0$, and domain $D({\cal A})=D(A_{Z})$, satisfies ${\cal A}\leq 0$. Furthermore, in the present case let us consider the space $X_{{\cal A}}=H^1(\mathbb{R})$ with norm 
\[
\|u\|^2_{X_{{\cal A}}}=\|u_x\|_{L^2}^2+(\beta +1)\|u\|_{L^2}^2-Z|u(0)|^2,
\] 
which is equivalent to the usual norm in $H^1(\mathbb{R})$. Hence, it is possible to verify uniqueness of the solution and that the conditions (3.7.1), (3.7.3) - (3.7.6) in \cite{Caz03} are satisfied with $r=\rho=2$. Finally, condition (3.7.2) in \cite{Caz03} also holds since ${\cal A}$ is a self-adjoint operator in $L^2(\mathbb{R})$.    
\end{proof}

\begin{remark}
\label{remGagliardoNir}
It is to be observed that, for any $p > 1$, $H^1(\R) \subset L^2(\R) \cap L^{p+1}(\R) \cap L^{2p}(\R)$, inasmuch as the Gagliardo-Nirenberg interpolation inequality (see, e.g., Theorem 12.82 in \cite{Leoni2ed}) yields
\begin{equation}
\label{gagliNir}
\begin{aligned}
\|u\|_{L^{p+1}} &\leq C_1 \|u \|_{L^2}^{\theta_1} \|u_x \|_{L^2}^{1-\theta_1},\\
\|u\|_{L^{2p}} &\leq C_2 \|u \|_{L^2}^{\theta_2} \|u_x \|_{L^2}^{1-\theta_2},
\end{aligned}
\end{equation}
with uniform constants $C_j > 0$ and $\theta_1 = (p+2)/(2p+2) \in (0,1)$, $\theta_2 = (p+1)/2p \in (0,1)$.
\end{remark}

Before showing global well-posedness we need an auxiliary result. Let us define the following $C^1$ functional in $H^1(\R)$,
\begin{equation}
\label{deffuncR}
R(v) := \frac{1}{2}\|v_x \|^2_{L^2}-\frac{Z}{2}|v(0)|^{2} -\frac{\lambda_2}{2p}\|v\|^{2p}_{L^{2p}} = E(v) + \frac{\lambda_1}{p+1}\|v\|^{p+1}_{L^{p+1}}, \qquad v\in H^1(\mathbb{R}).
\end{equation}
\begin{lemma}
\label{lemaux}
Let $1 < p < \infty$, $\lambda_1 \leq 0$, $\lambda_2 < 0$ and $Z > 0$. Then there exists a uniform constant $C = C(p,Z) > 0$ such that
\begin{equation}
\label{starR}
\frac{Z}{2}|v(0)|^2 \leq R(v) + C, \qquad \text{for all} \;\;v \in H^1(\R).
\end{equation} 
\end{lemma}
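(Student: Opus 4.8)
The plan is to control the only sign-indefinite contribution hidden in $R(v)$, namely the boundary term $-\tfrac{Z}{2}|v(0)|^2$, by interpolating the pointwise value $|v(0)|^2$ between the gradient energy $\|v_x\|_{L^2}^2$ and the defocusing term $\|v\|_{L^{2p}}^{2p}$. After moving $-\tfrac{Z}{2}|v(0)|^2$ to the left, the inequality \eqref{starR} is equivalent to
\[
Z|v(0)|^2 \le \tfrac12\|v_x\|_{L^2}^2 - \tfrac{\lambda_2}{2p}\|v\|_{L^{2p}}^{2p} + C,
\]
and since $\lambda_2<0$ the coefficient $-\tfrac{\lambda_2}{2p}$ is positive, so both terms on the right are nonnegative and at my disposal. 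I stress that one cannot route the estimate through $\|v\|_{L^2}^2$: a low, wide bump makes $\|v\|_{L^2}^2$ arbitrarily large while keeping $\|v_x\|_{L^2}$ and $\|v\|_{L^{2p}}$ small, so $\|v\|_{L^2}^2$ is not controlled by the right-hand side. The whole point is therefore to interpolate $|v(0)|^2$ against $\|v\|_{L^{2p}}$ rather than $\|v\|_{L^2}$.

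First I would invoke the embedding $H^1(\R)\hookrightarrow L^\infty(\R)$ to write $|v(0)|^2 \le \|v\|_{L^\infty}^2$, and then use the Gagliardo--Nirenberg inequality in the scale-invariant form
\[
\|v\|_{L^\infty}^2 \le K\,\|v_x\|_{L^2}^{2/(p+1)}\,\|v\|_{L^{2p}}^{2p/(p+1)}, \qquad K=K(p)>0,
\]
whose exponents are forced by the one-dimensional scaling $v\mapsto v(\lambda\,\cdot)$ (this is the analogue for $L^\infty$ of the interpolation inequalities already recorded in Remark \ref{remGagliardoNir}).

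Next I would absorb the two factors by Young's inequality. Viewing the right-hand side as a product of $\|v_x\|_{L^2}^{2/(p+1)}$ and $\|v\|_{L^{2p}}^{2p/(p+1)}$ and applying Young with conjugate exponents $p+1$ and $(p+1)/p$, I obtain, for any $\mu>0$,
\[
Z|v(0)|^2 \le \mu\,\|v_x\|_{L^2}^2 + C_\mu\,\|v\|_{L^{2p}}^{2},
\]
with $C_\mu=C_\mu(p,Z)$; choosing $\mu\le\tfrac12$ disposes of the gradient term. Finally, since $p>1$ gives $2<2p$, a second application of Young's inequality (with exponents $p$ and $p/(p-1)$) yields $C_\mu\|v\|_{L^{2p}}^{2}\le -\tfrac{\lambda_2}{2p}\|v\|_{L^{2p}}^{2p}+C$ for a constant $C$ depending on $p$, $Z$ and the fixed $\lambda_2$; equivalently, $\sup_{s\ge0}\big(C_\mu s-\tfrac{|\lambda_2|}{2p}s^{p}\big)<\infty$. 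Combining the two steps gives exactly the claimed bound, and tracking the constants shows $C$ is uniform in $v$.

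The computations here are entirely routine; the only genuine point — what I would call the main obstacle — is the choice of interpolation in the first step. One must interpolate $|v(0)|^2$ between $\|v_x\|_{L^2}^2$ and $\|v\|_{L^{2p}}^{2p}$ (and \emph{not} $\|v\|_{L^2}^2$), so that after Young's inequality the leftover power of $\|v\|_{L^{2p}}$ equals $2<2p$ and can be dominated, up to an additive constant, by the available defocusing term. This is precisely where the defocusing sign $\lambda_2<0$ and the range $p>1$ enter, and it is the estimate that will later feed the a priori $H^1$ bound needed to upgrade local to global well-posedness.
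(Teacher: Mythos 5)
Your proof is correct, but it takes a genuinely different route from the paper's. The paper controls the boundary term via the local Sobolev bound $|v(0)| \le \sqrt{2}\,\|v\|_{L^2(-1,1)}^{1/2}\|v_x\|_{L^2(-1,1)}^{1/2}$, absorbs half the gradient, and then --- this is its replacement for your interpolation step --- applies H\"older on the \emph{bounded} interval $(-1,1)$, where $L^{2p}$ does control $L^2$, followed by Young's inequality with exponents $p$ and $p/(p-1)$ to dominate $\|v\|_{L^2(-1,1)}^2$ by $\delta \|v\|_{L^{2p}}^{2p} + 2C_\delta$ with the choice $\delta = -\lambda_2/(2pC_1)$. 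You instead interpolate globally via the scale-invariant Gagliardo--Nirenberg bound $\|v\|_{L^\infty}^{p+1} \le (p+1)\,\|v\|_{L^{2p}}^{p}\,\|v_x\|_{L^2}$ (provable directly from $|v(x)|^{p+1} \le (p+1)\int |v|^{p}|v'|\,dx$ and Cauchy--Schwarz), then use two applications of Young; your exponents check out, and your final absorption $C_\mu s \le \tfrac{|\lambda_2|}{2p}s^{p} + C$ for $s = \|v\|_{L^{2p}}^2$ uses $p>1$ exactly as the paper's Young step does. One caveat about your side remark: it is true that the \emph{global} $L^2$ norm cannot appear on the right-hand side, but the paper does route the estimate through the \emph{local} norm $\|v\|_{L^2(-1,1)}^2$ --- localization to a bounded interval is precisely its device for circumventing the obstruction you identify, playing the same role as your scaling-exact choice of interpolation exponents. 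As to what each approach buys: yours is localization-free and makes the scaling structure transparent; the paper's is more elementary, needing only the trace-type Sobolev inequality on an interval plus H\"older. Finally, as you correctly acknowledge, your constant (like the paper's $\delta$) depends on $\lambda_2$ in addition to $(p,Z)$; the lemma's statement $C = C(p,Z)$ should be read with $\lambda_1, \lambda_2$ fixed, and both proofs share this dependence.
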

\begin{proof}
By Sobolev inequality (cf. \cite{Brez11}),
\[
|v(0)| \leq \sqrt{2} \|v\|_{L^2(-1,1)}^{1/2} \|v_x\|_{L^2(-1,1)}^{1/2},
\]
for any $v \in H^1(\R)$. Thus, for any $Z > 0$ there exists $C_1 = C_1(Z) > 0$ such that
\[
Z |v(0)|^2 \leq \frac{1}{2} \|v_x\|_{L^2}^2 + C_1 \|v\|_{L^2(-1,1)}^2.
\]
Apply H\"older's and Young's inequalities to estimate
\[
\|v\|_{L^2(-1,1)}^2 = \int_{-1}^1 |v|^2 \, dx \leq 2^{(p-1)/p} \left( \int_{-1}^1 |v|^{2p} \, dx\right)^{1/p} \leq \delta \|v \|_{L^{2p}(-1,1)}^{2p} + 2C_\delta,
\]
for any $\delta > 0$. Since $\lambda_2 < 0$, choose $\delta = - \lambda_2/(2pC_1) > 0$ to obtain
\[
Z |v(0)|^2 \leq \frac{1}{2} \|v_x\|_{L^2}^2 - \frac{\lambda_2}{2p} \|v \|_{L^{2p}}^{2p} + 2C_1 C_\delta,
\]
yielding \eqref{starR}, as claimed.
\end{proof}

\begin{theorem}[global well-posedness]
\label{gwpdel}
For every $p>1$, $Z\in \mathbb R$, $\lambda_1 \leq 0$ and $\lambda_2 < 0$ the Cauchy problem \eqref{cachy12} is globally well-posed  in $H^1(\mathbb{R})$.
\end{theorem}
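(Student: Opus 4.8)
The plan is to combine the local theory of Theorem \ref{cazi} with the conserved quantities \eqref{fi} so as to produce an a priori bound on the $H^1$-norm that rules out the blow-up alternative \eqref{blow}. A local solution already exists and satisfies $\|u(t)\|_{L^2}=\|u_0\|_{L^2}$ and $E(u(t))=E(u_0)$ throughout its maximal interval of existence, so by charge conservation it suffices to bound $\|u_x(t)\|_{L^2}$ uniformly; the criterion \eqref{blow} then forces $T=\infty$. First I would record that the two nonlinear terms in $E$ carry favorable signs: since $\lambda_1\leq 0$ and $\lambda_2<0$, both $-\frac{\lambda_1}{p+1}\|v\|^{p+1}_{L^{p+1}}$ and $-\frac{\lambda_2}{2p}\|v\|^{2p}_{L^{2p}}$ are nonnegative. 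In view of \eqref{deffuncR} this gives $E(v)\geq R(v)$, so these defocusing contributions may simply be discarded when seeking a lower bound for the energy.

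The only term that can work against us is the attractive point-interaction contribution $-\frac{Z}{2}|v(0)|^2$ in the focusing case $Z>0$. Here I would invoke the mechanism already isolated in Lemma \ref{lemaux}: the Sobolev trace estimate $|v(0)|\leq \sqrt{2}\,\|v\|_{L^2(-1,1)}^{1/2}\|v_x\|_{L^2(-1,1)}^{1/2}$ together with Young's inequality lets $\tfrac{Z}{2}|v(0)|^2$ be absorbed into a fixed fraction of $\|v_x\|^2_{L^2}$ at the cost of a lower-order $\|v\|^2_{L^2}$ term, yielding a constant $C_1=C_1(Z)>0$ with $\tfrac{Z}{2}|v(0)|^2\leq \tfrac14\|v_x\|^2_{L^2}+\tfrac{C_1}{2}\|v\|^2_{L^2}$. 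Combining this with $E(v)\geq R(v)$ and the nonnegativity of the $L^{2p}$ term in $R$ gives
\[
\tfrac14\|v_x\|^2_{L^2}\leq R(v)+\tfrac{C_1}{2}\|v\|^2_{L^2}\leq E(v)+\tfrac{C_1}{2}\|v\|^2_{L^2}.
\]
Evaluating at $v=u(t)$ and using both conservation laws, the right-hand side collapses to the time-independent constant $E(u_0)+\tfrac{C_1}{2}\|u_0\|^2_{L^2}$, which is the desired uniform bound on $\|u_x(t)\|_{L^2}$. In the remaining case $Z\leq 0$ the term $-\tfrac{Z}{2}|v(0)|^2$ is itself nonnegative, so discarding it along with the nonlinear terms gives immediately $\tfrac12\|u_x(t)\|^2_{L^2}\leq E(u(t))=E(u_0)$, and the argument is even simpler.

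I expect the main—and essentially the only—obstacle to be the control of the focusing delta term $-\frac{Z}{2}|v(0)|^2$ when $Z>0$, since a priori it might absorb an arbitrarily large share of the kinetic energy and destroy coercivity of $E$. The decisive point is that the trace functional $v\mapsto |v(0)|^2$ is \emph{subcritical} with respect to $\|v_x\|^2_{L^2}$ (the trace estimate contributes the gradient only to the power $1$), so that no more than a fixed fraction of $\|v_x\|^2_{L^2}$ is ever consumed; this is precisely what Lemma \ref{lemaux} packages. The repulsive nonlinearities play no essential role in the bound beyond supplying nonnegative terms that can be freely dropped, which is why the argument works uniformly for all $p>1$ and all $Z\in\mathbb{R}$.
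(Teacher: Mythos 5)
Your proposal is correct and follows the same skeleton as the paper's proof: local theory from Theorem \ref{cazi} plus conservation of charge and energy, discarding the defocusing nonlinear contributions (sign conditions $\lambda_1\leq 0$, $\lambda_2<0$), controlling the focusing trace term when $Z>0$, and invoking the blow-up alternative \eqref{blow}; the case $Z\leq 0$ is handled identically in both arguments. The one genuine deviation is the absorption step. The paper applies Lemma \ref{lemaux} as stated: the local $L^2$ remainder produced by the trace/Young estimate is converted, via H\"older and Young with $\delta=-\lambda_2/(2pC_1)$, into the defocusing term $-\frac{\lambda_2}{2p}\|v\|_{L^{2p}}^{2p}$, yielding $\frac{Z}{2}|v(0)|^2\leq R(v)+C$ with $C=C(p,Z)$ \emph{independent of $v$}. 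You instead stop at the intermediate inequality $\frac{Z}{2}|v(0)|^2\leq \frac14\|v_x\|_{L^2}^2+\frac{C_1}{2}\|v\|_{L^2}^2$ and dispose of the $L^2$ term by charge conservation. For Theorem \ref{gwpdel} alone your route is slightly more elementary and a bit more general (the trace control never uses $\lambda_2<0$, so your bound would survive even with $\lambda_2=0$), and it delivers the same uniform $H^1$ bound in terms of $E(u_0)$ and $\|u_0\|_{L^2}$. What the paper's $v$-independent form of Lemma \ref{lemaux} buys is reusability in Section \ref{secstab}: in Lemmata \ref{siete} and \ref{ocho} the same estimate is applied to arbitrary functions and minimizing sequences of $G_\omega$, where no $L^2$ bound is available and your variant would in fact fail --- since the admissible frequencies satisfy $-\omega<Z^2/4$, the quadratic part $\frac12\|v_x\|_{L^2}^2-\frac{Z}{2}|v(0)|^2-\frac{\omega}{2}\|v\|_{L^2}^2$ is indefinite, and only the absorption into the $L^{2p}$ term rescues boundedness from below of $G_\omega$. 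So your proof is a valid, marginally leaner variant for global well-posedness, while the paper's formulation is chosen with the later variational arguments in mind. (One cosmetic caveat, inherited from the paper itself: the trace inequality $|v(0)|\leq\sqrt{2}\|v\|_{L^2(-1,1)}^{1/2}\|v_x\|_{L^2(-1,1)}^{1/2}$ needs an additional $\|v\|_{L^2(-1,1)}^2$ term to be literally correct on a bounded interval; this does not affect either argument, since the extra term is absorbed into the same lower-order constant.)
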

\begin{proof} Let $u \in C([-T,T]; H^1(\mathbb{R}))\cap C^1([-T,T]; H^{-1} (\mathbb{R}))$ be the local solution to the Cauchy problem \eqref{cachy12} from Theorem \ref{cazi}. From \eqref{Energy}, we can write the following equality
\begin{equation}\label{Timyr}
\frac{1}{2}\|u_x(t)\|^2_{L^2}=E(u(t))+\frac{Z}{2}|u(t)|^{2}+\frac{\lambda_1}{p+1}\|u(t)\|^{p+1}_{L^{p+1}}+\frac{\lambda_2}{2p}\|u(t)\|^{2p}_{L^{2p}},
\end{equation}
for $t \in (-T,T)$. Then, for $Z > 0$  we get immediately that
\begin{equation}\label{Timyrw}
\frac{1}{2}\|u_x(t)\|^2_{L^2}\leq E(u(t))+\frac{Z}{2}|u(t)|^{2}.
\end{equation}
By Lemma \ref{lemaux} there exist a uniform constant $C > 0$ such that
\begin{equation}\label{clave1}
\frac{Z}{2}|u(t)|^2\leq R(u(t))+C.
\end{equation}
Thus, from \eqref{Timyrw},  \eqref{clave1} and \eqref{deffuncR} we arrive at
\[
\frac{1}{2}\|u_x(t)\|^2_{L^2}\leq E(u(t))+R(u(t))+C\leq2E(u(t))+C.
\]
In view that $u$ conserves charge and energy we finally conclude that
\[
\|u(t)\|^2_{H^1}\leq 4E(u(0))+\|u(0)\|^2_{L^2}+2C,
\]
which implies, together with \eqref{blow}, that the time of existence of the solution $u$ is $T=+\infty$. 

In the case where $Z \leq 0$, we immediately get from \eqref{Timyr} that $\|u_x(t)\|^2_{L^2}\leq 2E(u(0))$. This concludes the proof.
\end{proof}

\section{Standing waves and equilibrium solutions}
\label{secstanding} 

This section is devoted to the construction of explicit solutions to the NLSDP model \eqref{deltasch1} of the form  \eqref{numeratorwith} with $\omega \neq 0$ (standing waves) and of the form \eqref{phi0} with $\omega = 0$ (equilibrium solutions). For that purpose let us consider the following general problem 
\begin{equation}\label{genod}
\left\{
\begin{aligned}
&-A_Z\phi+ \omega \phi+f(|\phi|^2)\phi=0,\\
&\phi\in H^1(\mathbb R)\backslash \{0\},
\end{aligned}
\right.
\end{equation}
where $f = f(\cdot)$ is an arbitrary function satisfying
\begin{subequations}
\begin{align}
f \in C^1((0,+\infty); \R) \quad \text{with} \; \; f(0) = 0, \label{ci}\\
f'(x) < 0  \quad \text{for all} \; \, x > 0. \label{cii}
\end{align}
\end{subequations}
%
%
For example,  if $1 < p < \infty$, $\lambda_1\leq 0$ and $\lambda_2<0$, with $|\lambda_1| + |\lambda_2| \neq 0$, then the function
\begin{equation}
\label{genf12}
f(x) = \lambda_1 x^{(p-1)/2} + \lambda_2 x^{p-1},
\end{equation}
satisfies conditions \eqref{ci} and \eqref{cii}.

We recall that $\phi\in H^1(\mathbb R)\backslash \{0\}$ is a solution in the distributional sense  for \eqref{genod} if for every $\chi\in H^1(\mathbb R)$ we have (see \eqref{quadra}-\eqref{quadra2})
\begin{equation}\label{distri}
\begin{aligned}
0=&\langle A_Z\phi- \omega \phi-f(|\phi|^2)\phi, \chi \rangle= F_Z[\phi,\chi]-\langle \omega \phi+f(|\phi|^2)\phi, \chi \rangle\\
=& \Re\Big [\int_{-\infty}^{+\infty} \phi'(x)\overline{\chi'(x)}dx-Z \phi(0)\overline{\chi(0)} -\omega \int_{-\infty}^{+\infty}\phi(x)\overline{\chi(x)}dx -\int_{-\infty}^{+\infty}f(|\phi|^2(x))\phi(x)\overline{\chi(x)}dx \Big].
\end{aligned}
\end{equation}

Next lemma establishes the principal properties of the solutions to equation \eqref{genod} when $\phi\in H^1(\mathbb{R})$. This result will be useful throughout the variational analysis in section 4.1.

\begin{lemma}\label{regul} Let $\omega \in\mathbb{R}$, $Z\in\mathbb{R}\setminus\{0\}$ and let $f$ satisfy \eqref{ci} and \eqref{cii}. Suppose $\phi\in H^1(\mathbb{R})$ is a distributional solution to \eqref{genod}. Then $\phi$ satisfies the following properties:
\begin{subequations}
\begin{align}
&\phi\in C^j(\mathbb{R} \setminus \{0\})\cap C(\mathbb{R}),\qquad j=1,2,\label{reg}\\
&\phi''(x)+\omega\phi(x)+f(|\phi(x)|^2)\phi(x)=0,\qquad \text{for all } \;\; x\neq 0. \label{eqdifx}\\
&\phi'(0+)-\phi'(0-)=-Z\phi(0),\label{condsal}\\
&\phi'(x),\phi(x)\rightarrow 0, \qquad \qquad \;\; \text{if} \;\; |x|\rightarrow\infty,\label{complimitado}\\
&|\phi'(x)|^2+\omega|\phi(x)|^2+g(|\phi(x)|^2)=0, \qquad \text{for all} \;\; x\neq 0, \label{eqdifxre}
\end{align}
\end{subequations}
where,   
\[
g(x)=\int_0^xf(s)ds.
\]
\end{lemma}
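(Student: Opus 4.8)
The plan is to extract successively more regularity and structure from the weak identity \eqref{distri} by testing it against increasingly general $\chi$. First I would establish the interior regularity \eqref{reg} and the pointwise ODE \eqref{eqdifx}. Restricting \eqref{distri} to $\chi \in C_c^\infty(\mathbb{R}\setminus\{0\})$ annihilates the boundary term $Z\phi(0)\overline{\chi(0)}$ and shows that $\phi$ solves $-\phi'' = \omega\phi + f(|\phi|^2)\phi$ distributionally on $\mathbb{R}\setminus\{0\}$. Since $\phi \in H^1(\mathbb{R}) \hookrightarrow C(\mathbb{R})$, the continuity assertion in \eqref{reg} is immediate, and by \eqref{ci} the map $x \mapsto f(|\phi(x)|^2)\phi(x)$ is continuous (even at zeros of $\phi$, using only $f(0)=0$ and continuity of $f$ up to $0$). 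Hence the right-hand side is continuous off the origin, so $\phi''$ coincides there with a continuous function; a standard ODE bootstrap then gives $\phi \in C^2(\mathbb{R}\setminus\{0\})$, which yields \eqref{reg} for $j=1,2$ and \eqref{eqdifx}.

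Next I would derive the jump condition \eqref{condsal}. Testing \eqref{distri} against an arbitrary compactly supported $\chi \in H^1(\mathbb{R})$ (so that no boundary terms arise at $\pm\infty$) and integrating $\int \phi'\overline{\chi'}$ by parts separately on $(-\infty,0)$ and $(0,\infty)$---legitimate since $\phi \in C^2$ on each half-line---produces the boundary contribution $(\phi'(0-)-\phi'(0+))\overline{\chi(0)}$, while the remaining integrals combine through \eqref{eqdifx} into $-\int(\phi''+\omega\phi+f(|\phi|^2)\phi)\overline{\chi} = 0$. Thus \eqref{distri} collapses to $\mathrm{Re}\big[(\phi'(0-)-\phi'(0+)-Z\phi(0))\overline{\chi(0)}\big] = 0$ for all such $\chi$; taking $\chi$ with $\chi(0)=1$ and then $\chi(0)=i$ forces $\phi'(0+)-\phi'(0-) = -Z\phi(0)$, i.e.\ \eqref{condsal}.

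Finally I would treat \eqref{complimitado} and \eqref{eqdifxre} jointly. The decay $\phi(x)\to 0$ as $|x|\to\infty$ is the usual consequence of $\phi\in H^1(\mathbb{R})$. For the conserved quantity, I multiply \eqref{eqdifx} by $\overline{\phi'}$ and take real parts; using $\mathrm{Re}(\phi\overline{\phi'}) = \tfrac12(|\phi|^2)'$ and $\tfrac{d}{dx}g(|\phi|^2) = f(|\phi|^2)(|\phi|^2)'$, this shows that $\tfrac12|\phi'|^2 + \tfrac{\omega}{2}|\phi|^2 + \tfrac12 g(|\phi|^2)$ is constant on each of $(-\infty,0)$ and $(0,\infty)$. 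Letting $|x|\to\infty$ and invoking $\phi\to 0$ (whence $|\phi|^2\to 0$ and $g(|\phi|^2)\to g(0)=0$) shows $|\phi'(x)|^2$ tends to a constant $2C$; since $\phi'\in L^2(\mathbb{R})$, a nonzero limit would contradict integrability, so $C=0$. This simultaneously gives $\phi'(x)\to 0$, completing \eqref{complimitado}, and the identity \eqref{eqdifxre}.

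The routine ingredients are the Sobolev embedding and the integrations by parts; the one genuinely delicate point is fixing the integration constant of the conserved quantity to zero. The key observation is that $\phi'\in L^2(\mathbb{R})$ is incompatible with $|\phi'(x)|^2$ converging to a positive limit, which is precisely what forces that constant---and hence the first integral \eqref{eqdifxre}---to vanish. The only other subtlety, the behavior at zeros of $\phi$ where $f'$ is unavailable, is harmless because the regularity step uses only the continuity of $f$ from \eqref{ci}, not the monotonicity \eqref{cii}.
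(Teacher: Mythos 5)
Your proposal is correct, and its overall skeleton matches the paper's proof: you localize the weak identity \eqref{distri} away from the origin to obtain the pointwise ODE \eqref{eqdifx}, recover the jump condition \eqref{condsal} by half-line integration by parts (your choice of $\chi(0)=1$ and then $\chi(0)=i$ to upgrade the real-part identity to the full complex equation is a detail the paper leaves implicit), and obtain the first integral by differentiating the energy density. The one genuinely different step is how you prove $\phi'(x)\to 0$ and fix the integration constant. The paper establishes \eqref{complimitado} \emph{first}, via an $L^2$-bootstrap: testing against cutoffs $\xi$ supported away from the origin yields $(\xi\phi)''+\omega(\xi\phi)=\xi''\phi+2\xi'\phi'-\xi f(|\phi|^2)\phi\in L^2(\R)$, hence $\xi\phi\in H^2(\R)$, so $\phi'\in H^1$ away from the origin and both $\phi$ and $\phi'$ decay at infinity; the constant in \eqref{eqdifxre} is then killed by letting $|x|\to\infty$. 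You invert the order: you first get constancy of $\tfrac12|\phi'|^2+\tfrac{\omega}{2}|\phi|^2+\tfrac12 g(|\phi|^2)$ on each half-line, deduce $|\phi'(x)|^2\to 2C_{\pm}$ using only $\phi\to 0$ (Sobolev embedding) and $g(0)=0$, and rule out $C_{\pm}>0$ by incompatibility with $\phi'\in L^2(\R)$ --- so the decay of $\phi'$ comes out as a byproduct rather than as an input. Your route is slightly more elementary (no $H^2$ information at infinity is needed, only $\phi\in H^1$), and it correctly handles the possibility of two distinct constants on the two half-lines; what the paper's bootstrap buys in exchange is the stronger conclusion $\phi\in H^2(\R\setminus\{0\})$, which is what places the solution in the operator domain $D(A_Z)$ in \eqref{A_Z}, whereas your continuity-based bootstrap for \eqref{reg} (the right-hand side of the ODE is continuous, hence $\phi\in C^2$ off the origin) delivers exactly the regularity stated in the lemma but not that Sobolev information. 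One tacit point common to both arguments, worth making explicit: the one-sided limits $\phi'(0\pm)$ used in the integration by parts exist because $\phi''$ remains bounded near the origin, $\phi$ being continuous there.
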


\begin{proof}
The proof of this lemma follows the ideas of the proof of Lemma 3.1 in \cite{FuJe08}. By convenience of the reader we give a sketch of it. Indeed, properties \eqref{reg} and \eqref{complimitado} are proved by a standard boostrap argument, namely, for all $\xi\in C_0^{\infty}(\mathbb{R}\setminus\{0\})$ the function $\xi\phi$ satisfies, for every $\chi\in H^1(\mathbb R)$,
\[
\langle (\xi\phi)''+\omega(\xi\phi),\chi\rangle = \xi''\phi+2\xi'\phi'-\xi f(|\phi|^2)\phi, \chi\rangle,
\]
because $\phi$ is a solution to \eqref{distri} with $\chi$ substituted by $\chi \xi$. Thus, we obtain the following equality in the distributional sense (in $H^{-1}(\mathbb R)$),
\begin{equation}\label{distri2}
 (\xi\phi)''+\omega(\xi\phi) = \xi''\phi+2\xi'\phi'-\xi f(|\phi|^2)\phi.
\end{equation}
Since the right hand side of the previous identity is in $L^2(\mathbb{R})$, then $\xi\phi\in H^2(\mathbb{R})$, that is, $\phi\in H^2(\mathbb{R}\setminus \{0\})\cap C^1(\mathbb{R}\setminus\{0\})$. Thus, using equality \eqref{distri2} once again we immediately obtain \eqref{reg} and \eqref{complimitado}. Moreover, since $\phi\in C^2(\mathbb{R}\setminus \{0\})$ and $\xi\in C_0^{\infty}(\mathbb{R}\setminus\{0\})$, dense in $L^2(\mathbb{R})$, we obtain that equality\eqref{distri2} is true pointwise for all $x\neq 0$ and it implies \eqref{eqdifx}.

Next, since $\phi$ satisfies \eqref{distri} for every $\chi\in H^1(\mathbb R)$ (continuous in $x=0$) we obtain, after integration by parts  in \eqref{distri} and after using \eqref{eqdifx}, that
\[
0=\Re \Big( \big(\phi'(0+)-\phi'(0-)+Z\phi(0) \big) \overline{\chi(0}) \Big).
\]
Thus, we obtain relation \eqref{condsal}. Now, from \eqref{eqdifx} we deduce that 
\begin{equation}\label{distri4}
\begin{aligned}
&\frac12 \frac{d}{dx}\Big(|\phi'(x)|^2+\omega|\phi(x)|^2+g(|\phi(x)|^2)\Big)=\\
&\Re(\phi''(x)\overline{\phi'(x)})+\omega\Re(\phi(x)\overline{\phi'(x)})+f(|\phi(x)|^2)\Re(\phi(x)\overline{\phi'(x)})=0.
\end{aligned}
\end{equation}
Hence, integrating the previous equation with respect to the variable $x$ and using \eqref{complimitado} we obtain \eqref{eqdifxre}.
\end{proof}

The following general lemmata will be useful later on.

\begin{lemma}
\label{positive}
Let $p>1$, $\omega,\lambda_1,\lambda_2\in\mathbb{R}$ and $Z\in\mathbb{R}\setminus\{0\}$. Let $\phi$ be a non-trivial solution to \eqref{reg} - \eqref{eqdifxre}. Then $\phi(x)\neq 0$ for all $x\in\mathbb{R}$ and $|\phi|>0$. Notice that  $-\phi$ is a solution to \eqref{eqdifx} as well.
\end{lemma}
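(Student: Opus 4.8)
The plan is to argue by contradiction, exploiting the first integral \eqref{eqdifxre} together with uniqueness for the initial value problem associated with the second-order equation \eqref{eqdifx}. The crucial observation is that the primitive $g(x)=\int_0^x f(s)\,ds$ satisfies $g(0)=0$, so that whenever $\phi$ vanishes at a point the conserved quantity in \eqref{eqdifxre} collapses to $|\phi'|^2$ alone. Throughout I work with the concrete nonlinearity $f(|\phi|^2)\phi=\lambda_1|\phi|^{p-1}\phi+\lambda_2|\phi|^{2p-2}\phi$.

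First I would suppose that $\phi(x_0)=0$ for some $x_0\in\mathbb{R}$ and deduce that $\phi'$ also vanishes there. If $x_0\neq 0$, then evaluating \eqref{eqdifxre} at $x_0$ gives $|\phi'(x_0)|^2=-\omega|\phi(x_0)|^2-g(|\phi(x_0)|^2)=0$, hence $\phi'(x_0)=0$. If instead $x_0=0$, I would pass to the one-sided limits $x\to 0^{\pm}$ in \eqref{eqdifxre}, which are legitimate since $\phi\in C^1(\mathbb{R}\setminus\{0\})$ with existing one-sided derivatives $\phi'(0\pm)$; using continuity of $\phi$ at the origin (so $\phi(0)=0$) this yields $|\phi'(0\pm)|^2=0$, i.e. $\phi'(0+)=\phi'(0-)=0$, consistently with the jump condition \eqref{condsal} whose right-hand side $-Z\phi(0)$ vanishes.

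With a zero of $\phi$ accompanied by a vanishing derivative, I would invoke uniqueness for \eqref{eqdifx}. On the component of $\mathbb{R}\setminus\{0\}$ containing $x_0$ (or, when $x_0=0$, on each half-line $(0,\infty)$ and $(-\infty,0)$ with data prescribed at $0^{\pm}$) the zero function solves the same initial value problem, so $\phi\equiv 0$ there; continuity across the origin and iteration then force $\phi\equiv 0$ on all of $\mathbb{R}$, contradicting non-triviality. Consequently $\phi(x)\neq 0$ for every $x$, and since $|\phi|$ is continuous and nowhere zero, $|\phi|>0$.

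The main obstacle is justifying the uniqueness step, which rests on local Lipschitz continuity of the vector field $\phi\mapsto-\omega\phi-\lambda_1|\phi|^{p-1}\phi-\lambda_2|\phi|^{2p-2}\phi$ near $\phi=0$, the only delicate region. For $1<p<2$ the maps $\phi\mapsto|\phi|^{p-1}\phi$ and $\phi\mapsto|\phi|^{2p-2}\phi$ are not smooth at the origin, but one checks that they are of class $C^1$ there with differential tending to $0$ as $\phi\to 0$ (their partial derivatives are dominated by $|\phi|^{p-1}$ and $|\phi|^{2p-2}$ respectively, both vanishing at the origin since $p>1$); hence the vector field is locally Lipschitz and the Picard–Lindel\"of theorem applies. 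Finally, the closing remark is immediate: since \eqref{eqdifx} is odd in $\phi$, replacing $\phi$ by $-\phi$ leaves $|\phi|^2$ unchanged, so $(-\phi)''+\omega(-\phi)+f(|{-\phi}|^2)(-\phi)=-(\phi''+\omega\phi+f(|\phi|^2)\phi)=0$, i.e. $-\phi$ solves \eqref{eqdifx} as well.
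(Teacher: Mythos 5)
Your proof is correct and follows essentially the same route as the paper's: argue by contradiction, use the first integral \eqref{eqdifxre} to show that any zero of $\phi$ (including at $x_0=0$, via one-sided limits) forces the corresponding (one-sided) derivative to vanish, and then invoke uniqueness for the Cauchy problem of \eqref{eqdifx} on each half-line, together with continuity and the jump condition \eqref{condsal}, to conclude $\phi\equiv 0$, contradicting non-triviality. Your explicit check that $\phi\mapsto\lambda_1|\phi|^{p-1}\phi+\lambda_2|\phi|^{2p-2}\phi$ is $C^1$ with vanishing differential at the origin (hence locally Lipschitz) for all $p>1$ merely fills in a detail the paper leaves implicit under the name ``Cauchy principle.''
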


\begin{remark} In Lemma \ref{seis} below we show the existence of a unique positive (negative) solution for \eqref{eqdifx}  satisfying \eqref{reg}, \eqref{condsal} and \eqref{complimitado}. By the analysis in section 3.1, that positive solution must be determined by the profile in \eqref{numeratorwith} in the case $Z>0$.

\end{remark}
\begin{proof}[Proof of Lemma \ref{positive}]
We argue by contradiction. If there exists $x_0>0$ such that $\phi(x_0)=0$, then from \eqref{eqdifxre} we obtain that $\phi'(x_0)=0$. Now, since $\phi$ satisfies \eqref{eqdifx} for $x>0$, and $\phi(x_0)=\phi'(x_0)=0$, then from the Cauchy principle we conclude that $\phi(x)=0$ for all $x>0$. Hence, from \eqref{reg}   
and \eqref{condsal}, we infer that $\phi(0)=0$ and $\phi'(0+)=\phi'(0-)=0$, respectively. Then $\phi\in C^2(\mathbb R)$ and satisfies \eqref{eqdifx} for all $x\in \mathbb R$ with $\phi(0)=\phi'(0)=0$, therefore $\phi\equiv 0$.
 Similar arguments work if $x_0< 0$. Now, if $\phi(0)=0$ then from the identity \eqref{eqdifxre} we get that $\phi'(0+)=\phi'(0-)=0$. Hence, applying the same arguments discussed above we conclude that $\phi(x)=0$ for all $x\in\mathbb{R}$. This ends the proof of the lemma.
\end{proof}

\begin{lemma}
\label{carac}  
Let $p>1$, $\omega,\lambda_1,\lambda_2\in\mathbb{R}$ and $Z\in\mathbb{R}\setminus\{0\}$. Let $\phi$ be a non-trivial solution to \eqref{reg} - \eqref{eqdifxre}. Then we have either one of the following:
\begin{itemize}
\item[(i)] $\Im(\phi(x))=0$ for all $x\in\mathbb{R}$; or,
\item[(ii)] there exists $c\in\mathbb R$ such that $\Re(\phi(x))=c \, \Im(\phi(x))$ for all $x\in\mathbb{R}$. 
\end{itemize}
\end{lemma}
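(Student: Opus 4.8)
The plan is to extract a conserved Wronskian-type quantity from the profile equation. Since Lemma \ref{positive} guarantees that $\phi$ never vanishes, I would set $W(x) := \Im\big(\overline{\phi(x)}\,\phi'(x)\big)$, which is well-defined and $C^1$ on each of the half-lines $(-\infty,0)$ and $(0,+\infty)$, where $\phi\in C^2$ by \eqref{reg}. First I would differentiate $W$ and use \eqref{eqdifx}: since $\phi''=-\omega\phi-f(|\phi|^2)\phi$, one gets $\overline{\phi}\,\phi''=-(\omega+f(|\phi|^2))|\phi|^2\in\R$ and $\overline{\phi'}\,\phi'=|\phi'|^2\in\R$, so that
\[
W'=\Im\big(|\phi'|^2+\overline{\phi}\,\phi''\big)=0 .
\]
Hence $W$ is constant on each half-line. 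The decay property \eqref{complimitado} then forces this constant to vanish: as $|x|\to\infty$ both $\phi$ and $\phi'$ tend to $0$, so $W\to 0$, and being constant we conclude $W\equiv 0$ on $(0,+\infty)$ and on $(-\infty,0)$; equivalently $\overline{\phi}\,\phi'$ is real for all $x\neq 0$.

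Next I would introduce the polar representation $\phi=\rho\,e^{i\theta}$ with $\rho=|\phi|>0$, which is legitimate because $\phi$ is continuous and nonvanishing (and smooth on each half-line, where $\theta$ can be lifted smoothly). A direct computation gives $\overline{\phi}\,\phi'=\rho\rho'+i\,\rho^2\theta'$, whose imaginary part is exactly $\rho^2\theta'=W=0$. Since $\rho>0$, this yields $\theta'\equiv 0$ on each half-line, so $\theta$ equals a constant $\theta_+$ on $(0,+\infty)$ and a constant $\theta_-$ on $(-\infty,0)$.

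The only delicate point, and the one I expect to be the main obstacle, is the gluing at $x=0$: the transmission condition \eqref{condsal} allows $\phi'$ to jump at the origin, so a priori $\theta_+$ and $\theta_-$ need not coincide. However, $\phi$ is continuous at $0$ by \eqref{reg} and $\phi(0)\neq 0$ by Lemma \ref{positive}, so passing to the limits $x\to 0^{\pm}$ gives $\rho(0)\,e^{i\theta_+}=\phi(0)=\rho(0)\,e^{i\theta_-}$ with $\rho(0)=|\phi(0)|>0$, whence $e^{i\theta_+}=e^{i\theta_-}$ and therefore $\theta_+\equiv\theta_-\pmod{2\pi}$. Thus $\phi(x)=|\phi(x)|\,e^{i\theta_0}$ for a single constant $\theta_0$, and the dichotomy follows: writing $\Re\phi=|\phi|\cos\theta_0$ and $\Im\phi=|\phi|\sin\theta_0$, if $\sin\theta_0=0$ then $\Im(\phi)\equiv 0$ (case (i)), while if $\sin\theta_0\neq 0$ then $\Re(\phi)=(\cot\theta_0)\,\Im(\phi)$ everywhere, giving case (ii) with $c=\cot\theta_0$.
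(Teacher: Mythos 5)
Your proof is correct, and its first half coincides with the paper's argument while the second half takes a genuinely different route. The conserved quantity you form, $W=\Im\big(\overline{\phi}\,\phi'\big)=u\,v'-u'v$ with $u=\Re\phi$, $v=\Im\phi$, is exactly the Wronskian the paper obtains from the observation that $u$ and $v$ solve the same linear equation on $x\neq 0$; in both arguments the decay \eqref{complimitado} forces the constant to vanish, which is the paper's identity \eqref{ideb}. From there the paths diverge: the paper stays with the real pair $(u,v)$ and argues by dichotomy on the zeros of $v$ — if $v(x_0)=0$ then $v'(x_0)=0$ and Cauchy uniqueness for the linear ODE gives $v\equiv 0$ (case (i)), while if $v$ never vanishes one integrates $(u/v)'=0$ (case (ii)) — so it never needs a polar lifting. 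You instead invoke Lemma \ref{positive} to write $\phi=\rho e^{i\theta}$ with $\rho>0$, read $W=\rho^2\theta'=0$, and conclude the phase is constant on each half-line, gluing across the origin by continuity and $\phi(0)\neq 0$; this use of Lemma \ref{positive} is legitimate and non-circular, since that lemma precedes the statement and is proved independently via \eqref{eqdifxre} and Cauchy uniqueness, and your treatment of the one delicate point — the possible phase mismatch permitted by the jump condition \eqref{condsal} — is exactly right. What your route buys is a strictly stronger conclusion, $\phi=e^{i\theta_0}|\phi|$ with a single global constant phase, and in fact it anticipates the polar-coordinate computation the paper defers to the proof of Lemma \ref{seis} (where $\rho^2\theta'=K$ and $K=0$ by decay); what the paper's route buys is a dichotomy extracted purely from linear ODE theory, without requiring the global non-vanishing of $\phi$ as an explicit input at that stage.
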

\begin{proof} If we define $\Re(\phi(x))=u(x)$ and $\Im(\phi(x))=v(x)$ then we obtain that $u$ and $v$ satisfy 
\begin{equation}\label{system}
\left\{
\begin{aligned}
&u''(x)+\omega u(x)+f(|\phi(x)|^2)u(x)=0,\\
&v''(x)+\omega v(x)+f(|\phi(x)|^2)v(x)=0,\\
\end{aligned}
\right.
\end{equation}
for all $x\neq 0$. Therefore, $(u'(x)v(x)-u(x)v'(x))'=0$ for all $x\neq 0$. Since $u,u',v,v'$ tend to zero at infinity, then we get
\begin{equation}\label{ideb}
u'(x)v(x)=u(x)v'(x),\hspace{0.4cm}\text{ for all }\hspace{0.5cm}x\in\mathbb{R}\setminus\{0\}. 
\end{equation}
If $v(x_0)=0$ for $x_0\neq 0$ then \eqref{ideb} and Lemma \ref{regul} yield $v'(x_0)=0$. Thus, from the second equation in \eqref{system} and Lemma \ref{regul} we conclude that $v(x)=0$ for all $x\in\mathbb{R}$. Similarly, if $v(0)=0$ then from \eqref{ideb} we get that $v'(0+)=0$ and hence $v(x)=0$ for all $x\in\mathbb{R}$, which implies that (i) holds. On the other hand, if $v(x)\neq 0$ for all $x\in\mathbb{R}$ then
\[
\frac{d}{dx}\left(\frac{u(x)}{v(x)}\right)=\frac{u'(x)v(x)-u(x)v'(x)}{v(x)^2}=0,\hspace{0.3cm}\text{for all}\hspace{0.3cm}x\in\mathbb{R}\setminus\{0\}.
\]
Since $u,v$ are continuous functions and $v(0)\neq 0$, the previous identity implies that there exists a constant $c\in\mathbb{R}$ such that $u(x)=cv(x)$ for all $x\in\mathbb{R}$. That is, (ii) holds. 
\end{proof}

According to Lemma \ref{regul}, in order to determine explicit solutions of the equation \eqref{genod} for all $x\in\mathbb{R}$, it is necessary to compute explicit regular solutions to equation \eqref{eqdifx} ($x\neq 0$) such that condition \eqref{complimitado} is satisfied. Hence, we use the latter to construct a new solution satisfying the jump condition \eqref{condsal}. This will be the general strategy to construct both standing waves and equilibrium solutions with point defects for $Z \neq 0$.

\subsection{Standing waves with $\omega \neq 0$}

In this subsection we construct explicit standing wave solutions to \eqref{deltasch1} with $\omega \neq 0$ and $Z\neq 0$. For that purpose, let us first compute positive solutions $\phi$ to \eqref{genod} with $f$ of the form \eqref{genf12} and with $Z = 0$.
Under these assumptions we obtain that $\phi\equiv \phi_\omega$ satisfies the nonlinear elliptic equation
\begin{equation}\label{ordendoisz0}
\phi''(x)+\omega\phi(x)+\lambda_1\phi^{p}(x)+\lambda_2\phi^{2p-1}(x)=0,\hspace{1,5cm}x\in\mathbb{R}.
\end{equation}
By a quadrature procedure and by considering the boundary condition for the profile $\phi(x)\to 0$ as $|x|\to \infty$, we obtain
\[
[\phi']^2+\omega\phi^2+2\alpha\phi^{p+1}+\beta\phi^{2p}=0.
\]
Thus, for $\lambda_1\leq 0$, $\lambda_2<0$ we define $\alpha := \lambda_1/(p+1)$ and $\beta := \lambda_2/p$. In order to obtain an explicit solution to equation \eqref{ordendoisz0}, we will assume $\alpha^2-\omega\beta<0$ and $\phi>0$. Upon substitution of $y=\phi^{p-1}$ into the previous formula we deduce that
\begin{equation}\label{inttrans}
\frac{1}{(p-1)\sqrt{-\beta s}}\bigintsss \frac{du}{(u-x_0)\sqrt{u^2+1}}=x,
\end{equation}
whereupon $s=-\beta^{-2}(\alpha^2-\omega\beta)>0$, $x_0=\alpha\beta^{-1}s^{-1/2}$ and $u=s^{-1/2}y+x_0$.
Next, for a positive constant $c > 0$ we obtain
\begin{equation*}\label{inttransit}
\sqrt{1+x_0^2}\bigintssss \frac{du}{(u-x_0)\sqrt{1+u^2}}=\log\left[\frac{c(u-x_0)\sqrt{1+x_0^2}}{2(1+x_0u)+2\sqrt{1+x_0^2}\sqrt{1+u^2}}\right].
\end{equation*}
Thus, if we define
\[
H(u) :=\frac{u-x_0}{1+x_0u+\sqrt{1+x_0^2}\sqrt{1+u^2}},
\]
then it is possible to verify that $H:[x_0,\infty)\rightarrow[0,\sqrt{1+x_0^2}-x_0)$ is a diffeomorphism with inverse given by
\[
H^{-1}(u)=\frac{x_0u^2-2u-x_0}{u^2+2x_0u-1}=x_0-\frac{2(1+x_0^2)}{u-u^{-1}+2x_0}.
\]
Henceforth, for $c=\frac{2}{\sqrt{1+x_0^2}}$ and since $\sqrt{-\beta s}\sqrt{1+x_0^2}=\sqrt{-\omega}$, we rewrite \eqref{inttrans} as 
\[
H(u)=e^{(p-1)\sqrt{-\omega}x}.
\]
Since $u=H^{-1}\left(e^{(p-1)\sqrt{-\omega}x}\right)$, $u=s^{-1/2}y+x_0$ and $y=\phi^{p-1}$, we obtain that the profile function
\begin{equation}\label{matria}
\phi(x)=\left[\frac{\omega}{\sqrt{\omega\beta-\alpha^2}\sinh((p-1)\sqrt{-\omega}x)-\alpha}\right]^{\frac{1}{p-1}},
\end{equation}
is a positive solution to equation \eqref{ordendoisz0} defined for  $x\in (-\infty,l_\omega)$, where 
\[
l_\omega :=\frac{1}{(p-1)\sqrt{-\omega}}\sinh^{-1}\left(\frac{\alpha}{\sqrt{\omega\beta-\alpha^2}}\right)<0.
\]
In addition, this solution satisfies
\[
\lim_{x\to -\infty}\phi(x)=0,\hspace{1.0cm}\text{and}\hspace{1.0cm}\lim_{x\to l_\omega}\phi(x)=+\infty,
\]
that is, $\phi$ decays to zero at $-\infty$ and blows up at $x=l_\omega$. We also observe that the solution $\phi$ will be well defined for $\lambda_1\leq 0$, $\lambda_2<0$, provided that
\[
-\frac{p\lambda_1^2}{(p+1)^2\lambda_2}<-\omega.
\]
In particular $\omega$ must be negative. If $\lambda_2\geq 0$ then the function in \eqref{matria} is not well defined.

Next, we use the non-continuous profile in \eqref{matria} to construct a continuous profile satisfying the relations in Lemma \ref{regul} when $f$ is of the form \eqref{genf12} and $Z\neq 0$. Thus, we notice that for values of $d$ (to be specified later) satisfying  $0<l_\omega+d$, we define for $x\leq 0$ the half-profile $\psi(x)\equiv \phi(x-d)$ and consider the even-profile
\begin{equation*}
\phi_1(x)=\left\{
\begin{aligned}
&\psi(x),\quad x\leq 0,\\
&\psi(-x),\quad x>0.\\
\end{aligned}
\right.
\end{equation*}
In other words, 
\begin{equation}\label{errsest}
\phi_1(x)=\phi(-|x|-d),\quad x\in \mathbb R,
\end{equation}
which satisfies $\phi_1\in H^1(\mathbb R)$ and all the properties of Lemma \ref{regul}, except for the jump condition \eqref{condsal}. In order to ensure this condition we need to restrict some parameters. Indeed,  since $\phi_1$ is an even function, condition \eqref{condsal} can be rewritten as
\[
\phi_1'(0-)=\frac{Z}{2}\phi_1(0),\hspace{0.5cm}\text{or equivalently,}\hspace{0.5cm}\phi'(-d)=\frac{Z}{2}\phi(-d).
\]
Hence, \eqref{matria} yields that the shift $d$ needs to satisfy the relation
\begin{equation}\label{translat}
R_1(d)=\frac{Z}{2\sqrt{-\omega}},
\end{equation}  
where $R_1:(-l_\omega,\infty)\rightarrow (1,\infty)$ is given by
\[
R_1(d)=\frac{\sqrt{\omega\beta-\alpha^2}\cosh((p-1)\sqrt{-\omega}d)}{\sqrt{\omega\beta-\alpha^2}\sinh((p-1)\sqrt{-\omega}d)+\alpha}. 
\]
It is not difficult to verify that $R_1$ is a decreasing diffeomorphism from the interval $(-l_\omega,\infty)$ to the interval $(1,\infty)$. In particular, we conclude that 
\[
Z>0, \qquad\text{and}\;\; -\omega<\frac{Z^2}{4},
\]
and furthermore, that
\begin{equation}\label{ese}
d=R_1^{-1}\left(\frac{Z}{2\sqrt{-\omega}}\right).
\end{equation}
Lastly, from \eqref{matria}, \eqref{errsest} and \eqref{ese} we conclude that the even-profile $\phi_{1}=\phi_{\omega}$ given by
\begin{equation}\label{numeratorwithZ1}
\phi_{\omega}(x)=\left[\frac{\alpha}{-\omega}+\frac{\sqrt{\omega\beta -\alpha^2}}{-\omega}\sinh\left((p-1)\sqrt{-\omega}\left(|x|+R_1^{-1}\left(\frac{Z}{2\sqrt{-\omega}}\right)\right)\right)\right]^{-\frac{1}{p-1}},
\end{equation}
is a solution to \eqref{peak}. Whence, we have proved the following existence result of even-peak standing wave solutions to equation \eqref{deltasch1}.

\begin{theorem}
\label{solution} 
Let $p>1$, $\lambda_1\leq 0$, $\lambda_2 <0$  and  $Z>0$ in equation \eqref{deltasch1}.  Then for all values of $\omega<0$ satisfying 
\[
-\frac{p\lambda_1^2}{(p+1)^2\lambda_2}<-\omega<\frac{Z^2}{4}
\]
the familiy of standing wave solutions, $u(x,t)=e^{-i\omega t}\phi_{\omega}$, with $\phi_{\omega}$ given by formula \eqref{numeratorwithZ1} are solutions to the double power nonlinear Schr\"odinger equation \eqref{deltasch1}.  
\end{theorem}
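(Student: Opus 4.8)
The plan is to verify that the function $\phi_\omega$ in \eqref{numeratorwithZ1} satisfies all the conditions of Lemma \ref{regul} for the nonlinearity $f$ in \eqref{genf12}; by the characterization \eqref{distri} this is exactly what it means for $e^{-i\omega t}\phi_\omega$ to solve \eqref{deltasch1}. Concretely, I must produce an even $H^1(\R)$ function that (i) solves the autonomous ODE \eqref{eqdifx} on each half-line $x \neq 0$, (ii) satisfies the jump relation \eqref{condsal} at the origin, and (iii) decays as in \eqref{complimitado}. Conversely, any even $H^1$ function satisfying (i)--(iii) is a distributional solution, as seen by reversing the integration by parts in \eqref{distri}; this is the direction I actually need. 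Rather than guessing \eqref{numeratorwithZ1}, I would derive it constructively in three stages: solve the $Z=0$ problem explicitly on a half-line, reflect to build an even candidate, and then tune a translation parameter to enforce the jump condition.

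First I would treat the defect-free equation \eqref{ordendoisz0}. Multiplying by $\phi'$ and integrating, the decay requirement $\phi,\phi'\to 0$ forces the integration constant to vanish, giving the first integral $[\phi']^2 = -\omega\phi^2 - 2\alpha\phi^{p+1} - \beta\phi^{2p}$ with $\alpha = \lambda_1/(p+1)$ and $\beta = \lambda_2/p$. Imposing $\phi > 0$, the substitution $y = \phi^{p-1}$ converts the separated quadrature into the elementary integral $\int du/((u-x_0)\sqrt{u^2+1})$ appearing in \eqref{inttrans}, whose antiderivative is a logarithm; the crucial point is that the sign hypothesis $\alpha^2 - \omega\beta < 0$ (equivalently $-p\lambda_1^2/((p+1)^2\lambda_2) < -\omega$) is precisely what forces the radicand to take the form $\sqrt{1+u^2}$. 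Introducing the map $H$ and inverting it yields the explicit half-profile \eqref{matria}, defined on $(-\infty, l_\omega)$, decaying to $0$ at $-\infty$ and blowing up at the finite endpoint $l_\omega < 0$.

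Next I would exploit the translation invariance of \eqref{ordendoisz0}: for any shift $d$ with $l_\omega + d > 0$ the translate $x \mapsto \phi(x-d)$ still solves the ODE on $(-\infty,0]$ and stays bounded there, so the even reflection $\phi_1(x) = \phi(-|x|-d)$ of \eqref{errsest} is an even $H^1(\R)$ function solving \eqref{eqdifx} for every $x\neq 0$ and decaying at $\pm\infty$. By construction it automatically satisfies all the conclusions of Lemma \ref{regul} except the jump condition \eqref{condsal}. Because $\phi_1$ is even, that condition collapses to the single scalar equation $\phi'(-d) = (Z/2)\phi(-d)$, which, after substituting \eqref{matria}, is exactly the transcendental relation $R_1(d) = Z/(2\sqrt{-\omega})$ recorded in \eqref{translat}.

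It then remains to solve for $d$ and read off the parameter constraints. I would show, by computing $R_1'$ together with the boundary limits $R_1(d)\to +\infty$ as $d\downarrow -l_\omega$ and $R_1(d)\downarrow 1$ as $d\to +\infty$, that $R_1$ is a decreasing diffeomorphism of $(-l_\omega,\infty)$ onto $(1,\infty)$. Since its range is $(1,\infty)$, equation \eqref{translat} is solvable if and only if $Z/(2\sqrt{-\omega}) > 1$, that is $Z > 0$ and $-\omega < Z^2/4$, and in that case $d = R_1^{-1}(Z/(2\sqrt{-\omega}))$ is uniquely determined. Substituting this $d$ back into \eqref{errsest} and simplifying with the identity $\sqrt{-\beta s}\sqrt{1+x_0^2} = \sqrt{-\omega}$ produces the closed form \eqref{numeratorwithZ1}, completing the verification. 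The main obstacle is the explicit quadrature in the first stage: choosing the substitutions $y = \phi^{p-1}$ and then $u = s^{-1/2}y + x_0$ so that the integral linearizes, while keeping careful track of how admissibility of the $\sqrt{1+u^2}$ form ties directly to the parameter window $-p\lambda_1^2/((p+1)^2\lambda_2) < -\omega < Z^2/4$ asserted in the theorem.
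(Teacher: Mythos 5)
Your proposal is correct and follows essentially the same route as the paper's Section 3.1: quadrature of the defect-free equation \eqref{ordendoisz0} with vanishing integration constant, the substitution $y=\phi^{p-1}$ and the map $H$ to obtain the half-profile \eqref{matria}, even reflection $\phi_1(x)=\phi(-|x|-d)$, and enforcement of the jump condition via the decreasing diffeomorphism $R_1$, whose range $(1,\infty)$ yields exactly the constraint $-\omega<Z^2/4$ and the unique shift $d=R_1^{-1}\bigl(Z/(2\sqrt{-\omega})\bigr)$. Your explicit remark that the converse direction of Lemma \ref{regul} (an even $H^1$ function solving \eqref{eqdifx} off the origin with \eqref{condsal} and \eqref{complimitado} is a distributional solution) is the direction actually needed is a point the paper leaves implicit, but it is the same argument.
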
 

Figure \ref{figprofiles}  below shows the profile of $\phi_{\omega}$ in \eqref{numeratorwithZ1} in  the case of repulsive intraspecies quintic/cubic nonlinearities ($p=3$ and $\lambda_1 = \lambda_2 =-1$), with a delta point interaction of strength $Z = 2$. The frequency of oscillation is taken as $\omega = -0.25$ (and hence satisfying the assumption of Theorem \ref{solution}). The peaked shape of the profile is due to the effect of the $\delta$-trapping potential at the origin. 

\begin{figure}[ht]
\begin{center}
\includegraphics[scale=0.65, clip=true]{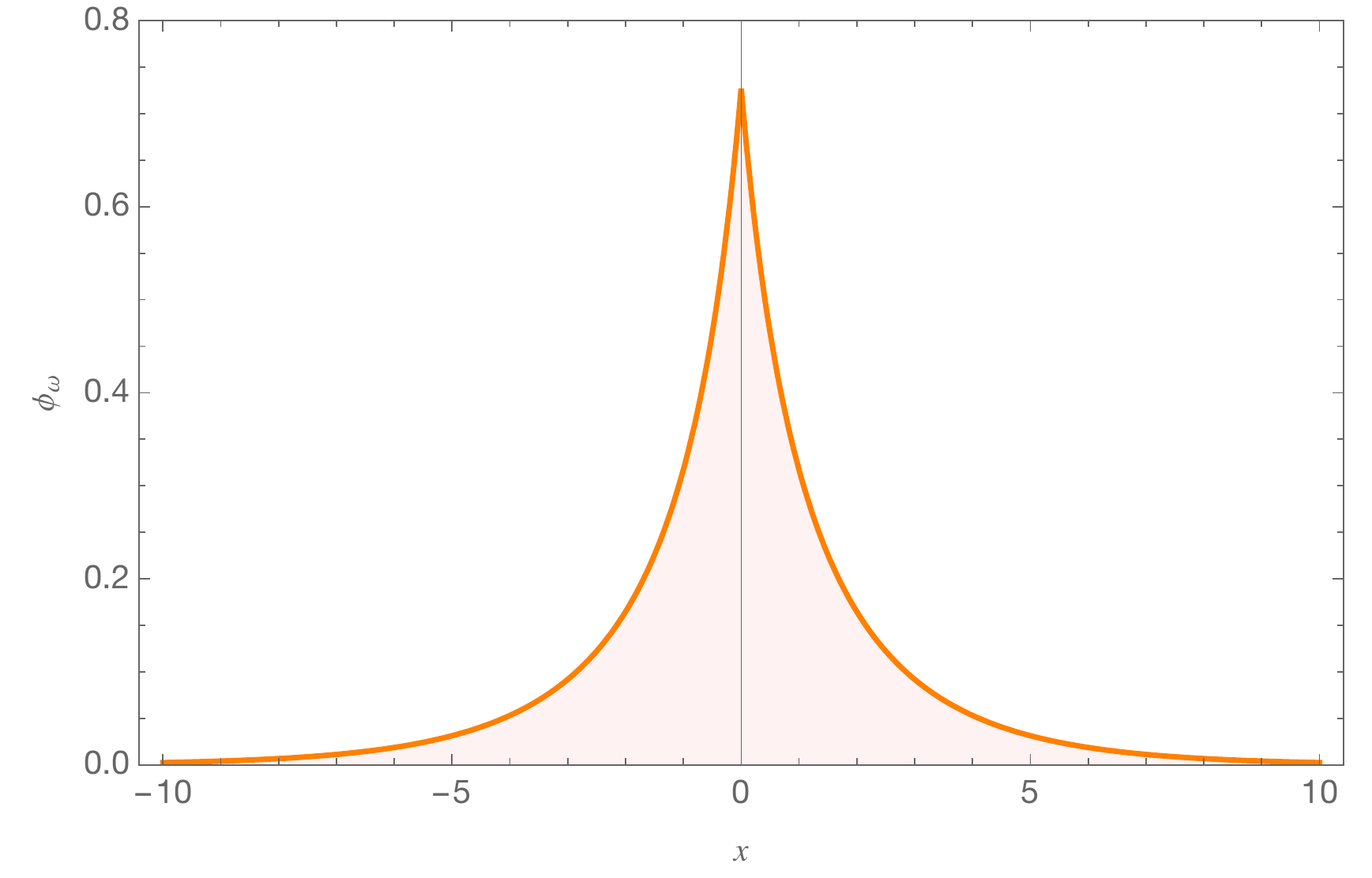}
\end{center}
\caption{Profile function $\phi_\omega = \phi_\omega(x)$ defined in \eqref{numeratorwithZ1} for parameter values $\omega = -0.25$, $Z = 2$, in the case of a cubic/quintic ($p=3$), doubly repulsive ($\lambda_1 = \lambda_2 = -1$) nonlinearity (color online).}\label{figprofiles}
\end{figure}

Figure \ref{figstanding} below shows the time evolution of the standing wave of the form $u(x,t) = e^{-i\omega t} \phi_\omega(x)$ as solution to the NLSDP model \eqref{deltasch1}
 for the same set of parameter values: $\lambda_1 = \lambda_2 = -1$ (repulsive interactions), $p=3$ (cubic/quintic nonlinearities) and $Z = 2$, with oscillation frequency $\omega = -0.25$.

\begin{figure}[h]
\begin{center}
\subfigure[$\Re (e^{-i \omega t} \phi_\omega(x))$]{\label{panela}\includegraphics[scale=.75, clip=true]{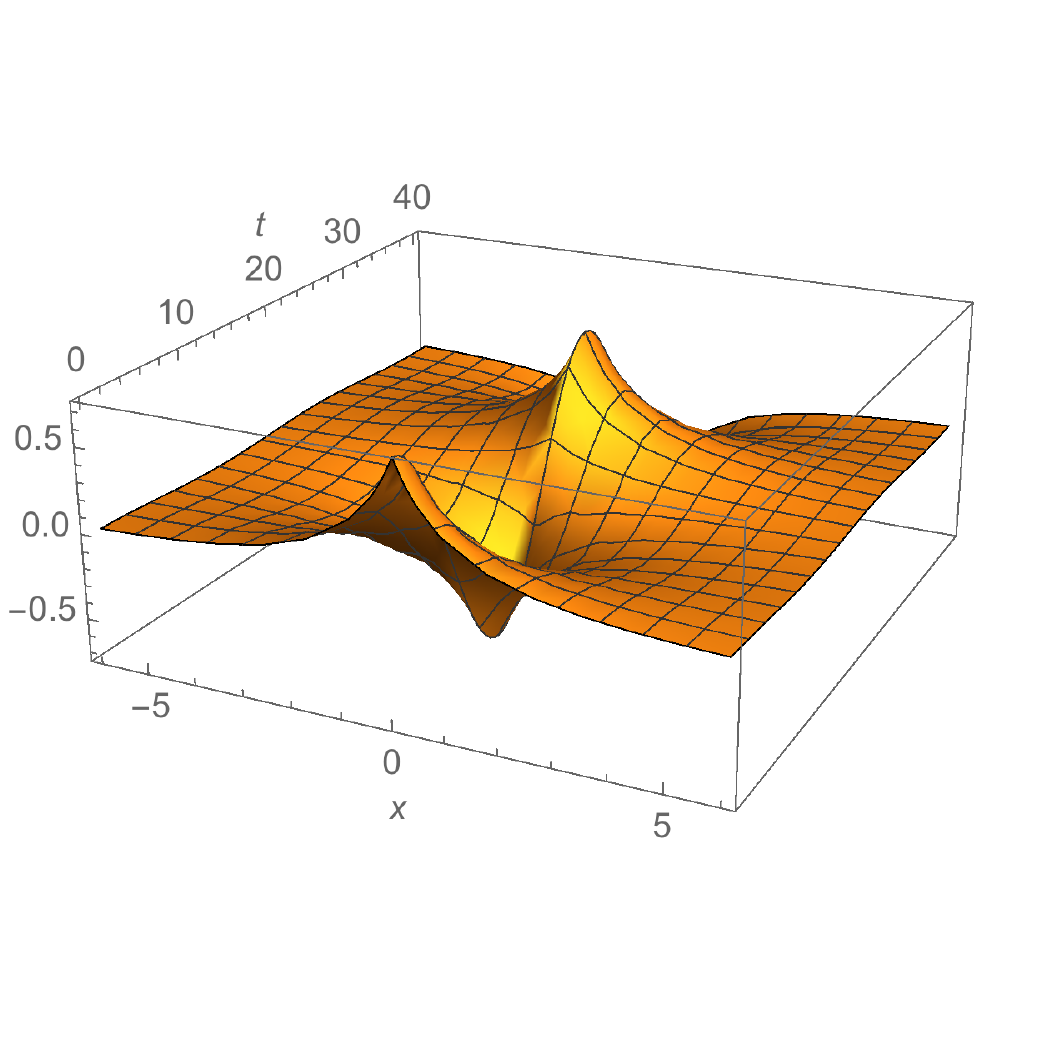}}
\subfigure[Contour plot]{\label{panelb}\includegraphics[scale=.65, clip=true]{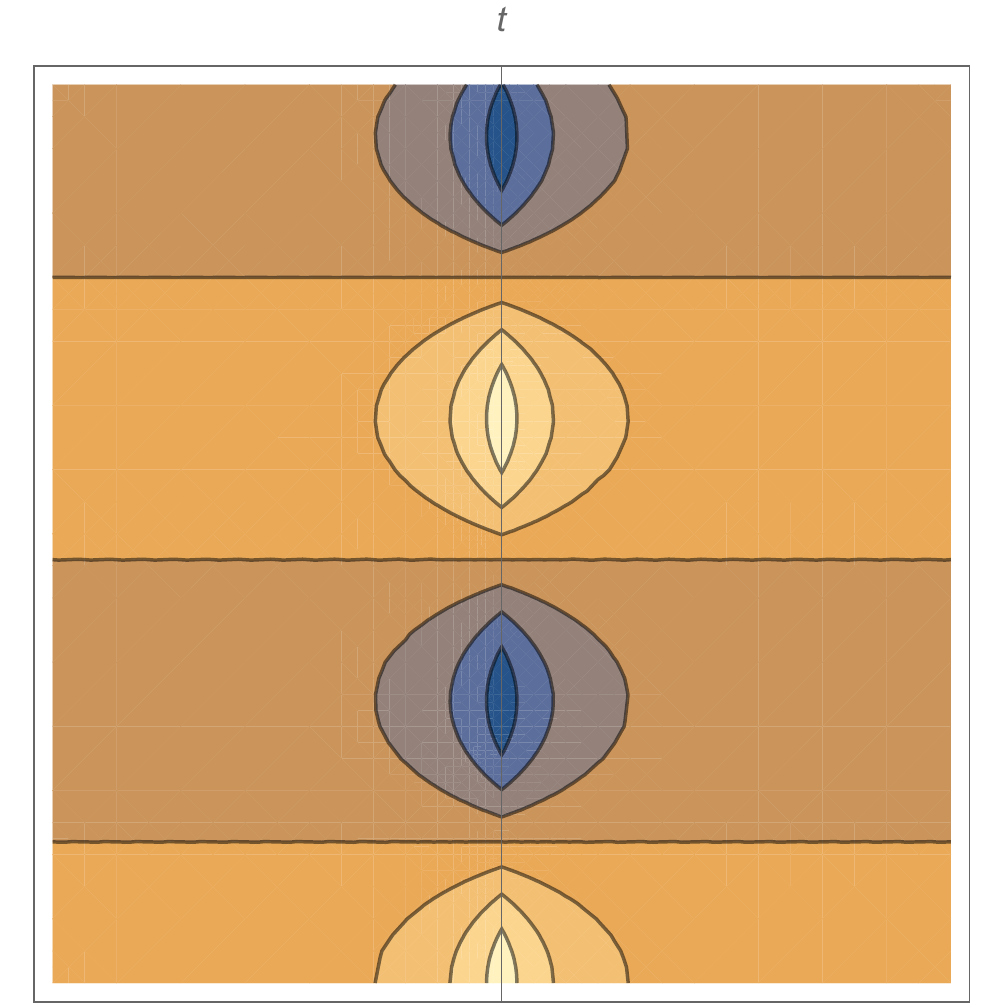}}
\end{center}
\caption{Time evolution of the standing wave solution $u(x,t) = e^{-i\omega t} \phi_\omega(x)$ where the profile function $\phi_\omega$ is defined in \eqref{numeratorwithZ1} with oscillation frequency $\omega = -0.25$, delta point interaction of strength $Z = 2$ and in the case of a quintic/cubic ($p=3$), doubly repulsive ($\lambda_1 = \lambda_2 = -1$) nonlinearity. The left panel \ref{panela} depicts $\Re u(x,t)$ for the parameter values under consideration. The right panel \ref{panelb} shows the contour plot of $\Re u(x,t)$ versus $x$ (horizontal) and $t$ (vertical) axes (color online).}\label{figstanding}
\end{figure}

\begin{remark}\label{phaseplane} 
Notice that equation \eqref{eqdifx} is a Hamiltonian system. Thus, the qualitative behavior of its solutions can be analyzed from the study of the level sets of its Hamiltonian function,
\[
\Phi_\omega(x,y)=y^2+\omega x^2+g(x^2), \qquad x,y\in\mathbb{R}.
\]

If $\omega < 0$ and $f$ is of the form \eqref{genf12}, $f(x) = \lambda_1 x^{(p-1)/2} + \lambda_2 x^{p-1}$, where $p$, $\lambda_1$, $\lambda_2$ and $Z$ satisfy the hypotheses of Theorem \ref{solution}, it is not difficult to verify that $(0,0)$ is the only equilibrium point of the function $\Phi_\omega$ and that it is a saddle. Hence, the ordinary differential equation in \eqref{eqdifx} has $(0,0)$ as the unique equilibrium point and it is hyperbolic (see Figure \ref{figHam}). Thus, any solution leaving $(0,0)$ (i.e., vanishing at $x = -\infty$) lies on the unstable manifold at the origin. Any solution that arrives at $(0,0)$ (i.e., vanishing at $x = +\infty$) lies on the stable manifold at the origin. Therefore, for a jump with given intensity $Z > 0$, the solution $\phi$ satisfying \eqref{reg} - \eqref{eqdifxre} leaves the origin along one branch of the unstable manifold and jumps to the stable manifold on the same half plane with same sign for $\phi$, returning to $(0,0)$ along it. That is, the only solution is either $\phi_\omega$ or $-\phi_\omega$, where $\phi_\omega = \phi_\omega(\cdot)$ is given by the profile function \eqref{numeratorwithZ1}. Since for $\omega < 0$ the origin is a hyperbolic point, the trajectory leaves and returns to the origin exponentially fast, just like the asymptotic behavior as $|x| \to \infty$ of the profile function $\phi_\omega = \phi_\omega(x)$ in \eqref{numeratorwithZ1}. 
 
 \begin{figure}[h]
\begin{center}
\includegraphics[scale=0.65, clip=true]{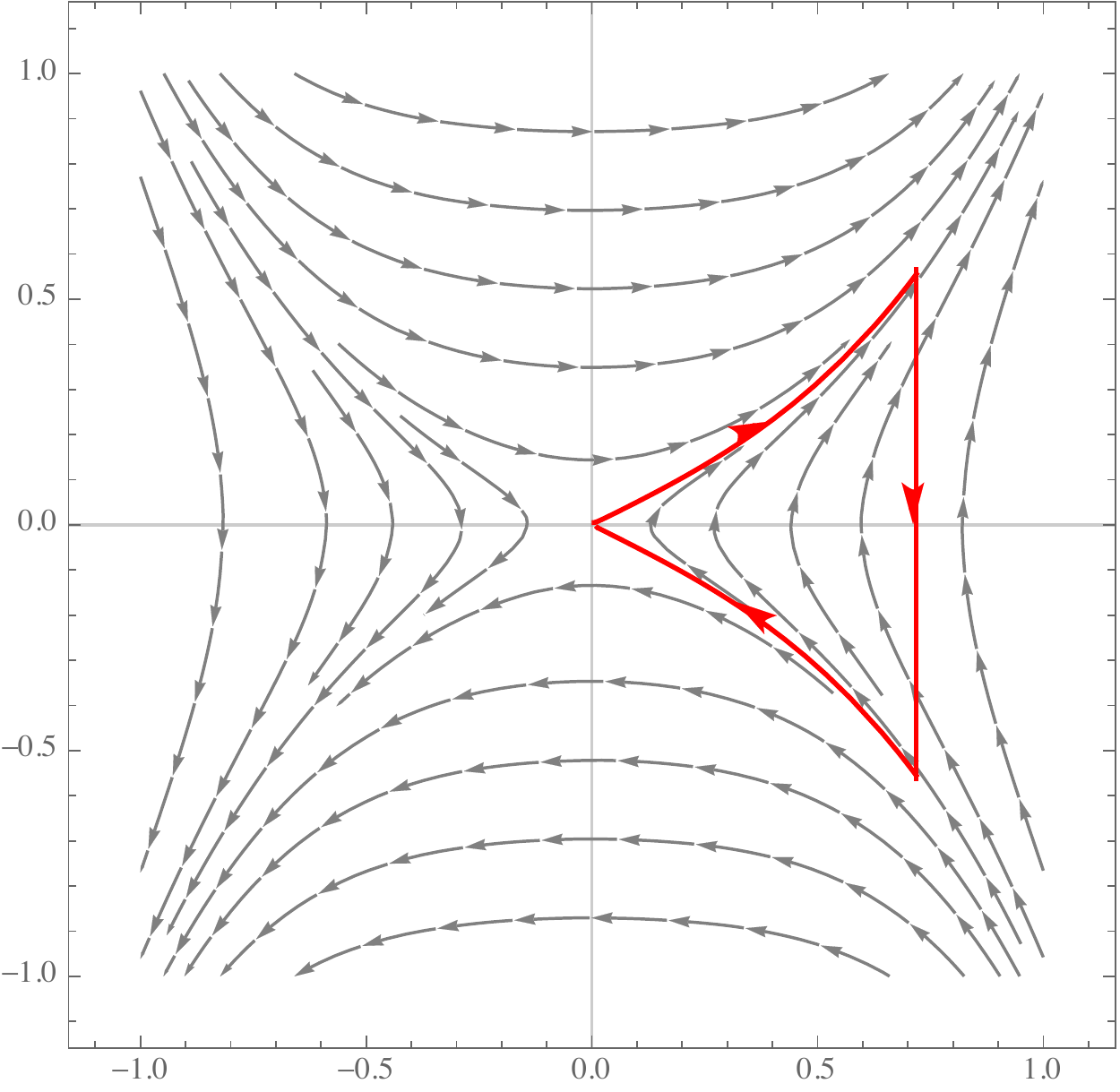}
\end{center}
\caption{Dynamics in the $(\phi, \phi')$-plane for equation \eqref{eqdifx} with $f(x) = - x (1+x^2)$, that is, for parameter values $\lambda_1 = \lambda_2 = -1$ and $\omega = -0.25$, in the case of a quintic/cubic nonlinearity with $p = 3$. The solutions to \eqref{reg} - \eqref{eqdifxre} leave the origin along the unstable manifold (trajectory in red; color online) for positive values of $\phi$, makes a jump in $\phi'$ of intensity $Z > 0$ to the stable manifold on the same side of the phase plane and returns to the origin along it.}\label{figHam}
\end{figure}
 
 \end{remark}

\subsection{Equilibrium solutions of rational profile}
\label{equilibriumsol} 

In this section we construct explicit equilibrium solutions ($\omega=0$) to the nonlinear Schr\"odinger equation \eqref{deltasch1} with $\lambda_1<0$ and $\lambda_2<0$. As before, we start by considering the case with $Z = 0$ and then we proceed to construct a solution to the equation with $Z \neq 0$ out of it. Upon substitution of $\omega=0$ and $Z=0$ into \eqref{peak} we obtain that $\phi\equiv \phi_0$ satisfies the nonlinear elliptic equation 
\begin{equation}\label{edoomega0}
\phi''(x)+\lambda_1\phi^{p}(x)+\lambda_2\phi^{2p-1}(x)=0,\hspace{1,5cm}x\in\mathbb{R}.
\end{equation}
Once again, one uses a quadrature procedure and applies the boundary condition for the profile $\phi(x)\to 0$ as $|x|\to \infty$ to arrive at
\begin{equation}
\label{ord1z0omega0}
[\phi']^2+2\alpha\phi^{p+1}+\beta\phi^{2p}=0,
\end{equation}
where $\alpha=\lambda_1/(p+1)<0$ and $\beta=\lambda_2/p<0$. In order to obtain an explicit solution to equation \eqref{edoomega0},  let us assume that $\phi > 0$. Upon substitution of $y=\phi^{p-1}$ into the previous formula, we deduce that
\begin{equation}
\label{inttransomega0}
\frac{1}{(p-1)\sqrt{-\beta }x_0}\bigintssss \frac{du}{u^{3/2}\sqrt{1+u}}=x,
\end{equation}
where $x_0=2\alpha/\beta$ and $u=y/x_0$. Since
\begin{equation*}\label{inttransitomega0}
\bigintssss \frac{du}{u^{3/2}\sqrt{1+u}}=-2\left(\frac{1}{u}+1\right)^{\frac{1}{2}} + c,
\end{equation*}
for any positive constant of integration $c > 0$, then it is not hard to verify that $\phi$ has the even-rational profile 
\begin{equation}\label{matriaomega0}
\phi(x)=\left[\frac{-2p(p+1)\lambda_1}{p(p-1)^2\lambda_1^2x^2+(p+1)^2\lambda_2}\right]^{\frac{1}{p-1}},
\end{equation} 
and it is a positive solution of  equation \eqref{ord1z0omega0} defined at least for $x\in (-\infty,l_0)$, where 
\begin{equation}\label{blowupomega0}
l_0=\frac{\sqrt{|\lambda_2|}(p+1)}{\sqrt{p}(p-1)\lambda_1}< 0,
\end{equation}
and satisfies
\[
\lim_{x\to -\infty}\phi(x)=0\qquad \text{and,} \qquad \lim_{x\to l_0^-}\phi(x)=+\infty,
\]
so that $\phi$ decays to zero at $-\infty$ and blows up at $x=l_0$. 

Next, we proceed to build  solutions to equation \eqref{peak} when $Z \neq 0$ and $\omega=0$. Thus, for $\lambda_1<0$, $\lambda_2< 0$ and $d$ satisfying  $0<l_0+d$, the function 
\begin{equation}\label{errsestomega0}
\phi_0(x) :=\phi(-|x|-d)
\end{equation}
satisfies $\phi_0\in H^1(\mathbb R)$ and  the relations of Lemma \ref{regul} except for the jump condition \eqref{condsal}. In order to ensure this condition we need to restrict some parameters. Indeed,  since $\phi_0$ is an even function, the condition \eqref{condsal} can be rewritten as
\[
\phi_0'(0+)=-\frac{Z}{2}\phi_0(0),\hspace{0.5cm}\text{or equivalently,}\hspace{0.5cm}\phi'(-d)=-\frac{Z}{2}\phi(-d).
\] 
Hence, from \eqref{matriaomega0} we obtain that $R_2(d)=Z/4$, where $R_2:(-l_0,\infty)\rightarrow (0,\infty)$ is given by
\begin{equation}\label{Erreomega0}
R_2(d)=\frac{d}{(p-1)(d^2-l_0^2)}. 
\end{equation}
It is possible to verify that $R_2$ is a decreasing diffeomorphism from the interval $(-l_0,\infty)$ to the interval $(0,\infty)$. In particular, from expression \eqref{Erreomega0} we conclude that $Z > 0$ and 
\begin{equation}\label{eseomega0}
d=R_2^{-1}\left(\frac{Z}{4}\right).
\end{equation}

Finally, from \eqref{matriaomega0}, \eqref{errsestomega0} and \eqref{eseomega0} we conclude that the function $\phi_0$ given by
\begin{equation}
\label{nutorwithZ1}
\phi_0(x)=\left[\frac{-2p(p+1)\lambda_1}{p(p-1)^2\lambda_1^2\left(|x|+R_2^{-1}\left(\frac{Z}{4}\right)\right)^2+(p+1)^2\lambda_2} \right]^{\frac{1}{p-1}}
\end{equation}
is a solution to \eqref{peak} provided that $\lambda_1<0$, $\lambda_2< 0$ and $Z > 0$. Therefore, we have established the following existence result of peak standing wave solutions to the NLSDP model \eqref{deltasch1}.

\begin{theorem}[equilibrium solutions]
\label{eqsolution} 
 Let $1 < p < 5$ and set $\omega=0$ in \eqref{peak}. For $\lambda_1<0$, $\lambda_2 < 0$ and any $Z>0$ the family of positive even-rational profiles defined by \eqref{nutorwithZ1} are equilibrium solutions to the nonlinear Schr\"odinger equation \eqref{deltasch1}. 
\end{theorem}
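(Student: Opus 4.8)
The plan is to show that the explicit profile \eqref{nutorwithZ1} fulfils the characterization of Lemma \ref{regul} with $\omega=0$ and that it belongs to $H^1(\R)$, so that it is a bona fide distributional solution of \eqref{peak}. The verification reduces to three tasks: first, that the one-sided rational function \eqref{matriaomega0} solves the autonomous equation \eqref{edoomega0} on a half-line and decays at $-\infty$; second, that its even reflection satisfies the transmission condition \eqref{condsal} for a suitable shift $d>-l_0$; and third, that the resulting $\phi_0$ is square integrable together with its derivative. Once these hold, reversing the integration by parts that yields \eqref{distri}---splitting the integral over $(-\infty,0)$ and $(0,\infty)$ and collecting the boundary contribution at the origin, which is exactly \eqref{condsal}---shows that $\phi_0$ solves \eqref{peak} weakly.

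For the first task I would integrate \eqref{edoomega0} by quadrature: multiplying by $\phi'$ and integrating, the decay requirement $\phi,\phi'\to 0$ as $|x|\to\infty$ forces the integration constant to vanish and produces the first integral \eqref{ord1z0omega0}. The signs $\lambda_1<0$, $\lambda_2<0$ (hence $\alpha,\beta<0$) guarantee a positive branch, and the substitution $y=\phi^{p-1}$ converts \eqref{ord1z0omega0} into the elementary integral \eqref{inttransomega0}, whose antiderivative is $-2(1/u+1)^{1/2}$. Inverting this relation gives the closed form \eqref{matriaomega0}; one then checks directly that it is positive and smooth on $(-\infty,l_0)$, tends to $0$ as $x\to-\infty$, and blows up at the finite point $l_0<0$ from \eqref{blowupomega0}. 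This computation is routine but sign-sensitive.

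For the second task I would set $\phi_0(x)=\phi(-|x|-d)$ as in \eqref{errsestomega0}. For any $d$ with $l_0+d>0$ this function is continuous, even, and $C^2$ away from $0$, so it automatically satisfies \eqref{reg}, \eqref{eqdifx} (with $\omega=0$), and, because $\phi$ vanishes at $-\infty$, also \eqref{complimitado} and \eqref{eqdifxre}. By evenness the jump condition \eqref{condsal} collapses to a single scalar identity relating $\phi'(-d)$ and $\phi(-d)$, which using \eqref{matriaomega0} is equivalent to $R_2(d)=Z/4$ with $R_2$ as in \eqref{Erreomega0}. I would then verify that $R_2\colon(-l_0,\infty)\to(0,\infty)$ is a strictly decreasing diffeomorphism; since its range is exactly $(0,\infty)$, this both forces $Z>0$ and determines a unique admissible shift $d=R_2^{-1}(Z/4)$, giving precisely \eqref{nutorwithZ1}.

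The step demanding the most care, and the source of the hypothesis $1<p<5$, is the third one. In contrast with the case $\omega<0$ of Theorem \ref{solution}, where the origin is a hyperbolic equilibrium and the profile decays exponentially, here $\omega=0$ degenerates the equilibrium and \eqref{nutorwithZ1} decays only algebraically: from its rational form one reads off $\phi_0(x)\sim C\,|x|^{-2/(p-1)}$ as $|x|\to\infty$. Hence $\phi_0\in L^2(\R)$ requires $4/(p-1)>1$, that is $p<5$, whereas $\phi_0'$ decays one power faster and is then automatically in $L^2(\R)$. Thus $1<p<5$ is precisely the condition needed for $H^1$-membership and is the only genuinely restrictive constraint; with it the three tasks close and $\phi_0$ is an equilibrium solution.
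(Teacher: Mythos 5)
Your proposal is correct and follows essentially the same route as the paper: quadrature of \eqref{edoomega0} to the first integral \eqref{ord1z0omega0}, the substitution $y=\phi^{p-1}$ yielding the rational profile \eqref{matriaomega0}, the even reflection with the jump condition reduced to $R_2(d)=Z/4$ via the decreasing diffeomorphism \eqref{Erreomega0}, and the algebraic-decay computation showing $\phi_0\in H^1(\R)$ precisely when $1<p<5$ (which the paper records in Remark \ref{rempl5}). The only difference is cosmetic: you spell out the reverse integration by parts establishing the weak formulation \eqref{distri}, which the paper leaves implicit via Lemma \ref{regul}.
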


\begin{remark}
\label{rempl5}
Even though the construction of the profile function \eqref{nutorwithZ1} works for a larger regime of parameter values, we specialize the statement of the existence theorem to the case $1 < p < 5$ in view that a direct inspection of the profile yields that $\phi_0 \notin L^2(\R)$ for $p \geq 5$ and $\phi_0 \in H^1(\R)$ for $1 < p < 5$, as the reader may directly verify.
\end{remark}

Figure \ref{figequilibria} above shows the profile function $\phi_0$ defined in \eqref{nutorwithZ1} in the doubly repulsive case ($\lambda_1=\lambda_2=-1$) for $Z = 1.25$ and in the case of cubic/quintic nonlinearities, $p=3$.

\begin{figure}[h]
\begin{center}
\includegraphics[scale=0.65, clip=true]{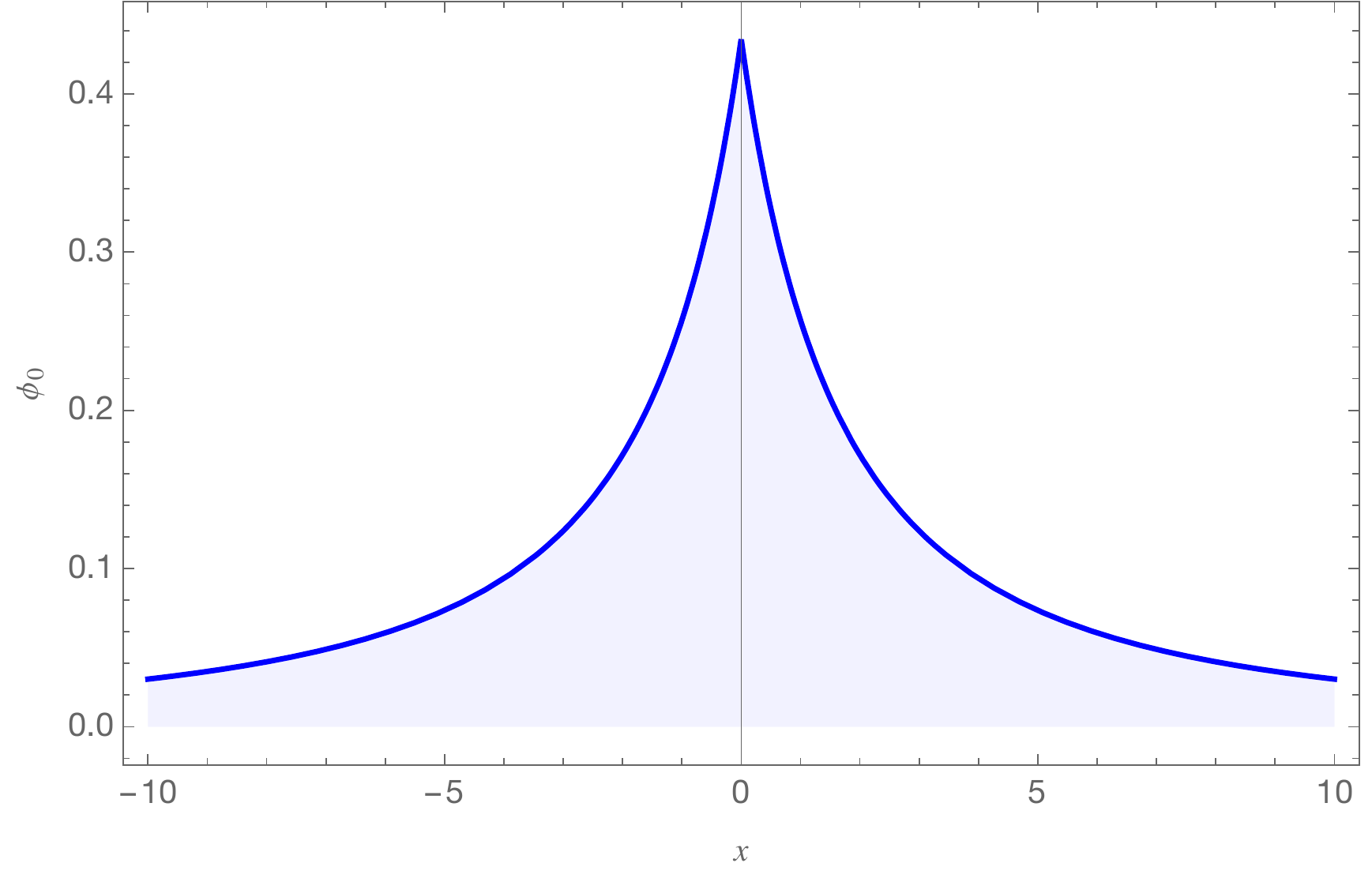}
\end{center}
\caption{Depiction of the profile function $\phi_0 = \phi_0(x)$ defined in \eqref{nutorwithZ1} for $Z = 1.25$, in the case of a cubic/quintic ($p=3$), doubly repulsive ($\lambda_1 = \lambda_2 = -1$) nonlinearity. (Color online.)}\label{figequilibria}
\end{figure}

\begin{remark}\label{phaseplaneom0} 
When $\omega = 0$, the dynamics of equation \eqref{eqdifx} in the phase plane depends on the Hamiltonian
\[
\Phi_0(x,y)=y^2+g(x^2), \qquad x,y\in\mathbb{R}.
\]

If $\omega = 0$ and $f$ is of the form \eqref{genf12}, $f(x) = \lambda_1 x^{(p-1)/2} + \lambda_2 x^{p-1}$, where $1 < p < \infty$, $\lambda_1 < 0$, $\lambda_2 \leq 0$ and any $Z > 0$, the equilibrium point $(0,0)$ in the $(\phi, \phi')$-plane is degenerate. Still, thanks to hypotheses \eqref{ci} -\eqref{cii}, it is the only equilibrium point of the system (see Figure \ref{figHamom0}). By degeneracy of the origin, the stable manifold is tangent to the center manifold $\phi' = 0$. Thus, any solution leaving $(0,0)$ (i.e., vanishing at $x = -\infty$) lies on the unstable manifold at the origin. Any solution that arrives at $(0,0)$ (i.e., vanishing at $x = +\infty$) lies on the stable manifold at the origin, also tangentially to $\phi' = 0$. Therefore, for a jump with given intensity $Z > 0$, the solution $\phi$ satisfying \eqref{reg} - \eqref{eqdifxre} leaves the origin along one branch of the unstable manifold and jumps to the stable manifold on the same half plane with same sign for $\phi$, returning to $(0,0)$ along it. That is, the only solution is either $\phi_0$ or $-\phi_0$, where $\phi_0 = \phi_0(\cdot)$ is given by the profile function \eqref{nutorwithZ1}. It is to be observed that thanks to the degeneracy of the equilibrium point in the case where $\omega = 0$, the trajectory leaves and returns to the origin algebraically fast, just like the asymptotic behavior of the profile function $\phi_0 = \phi_0(\cdot)$ in \eqref{nutorwithZ1} as $|x| \to \infty$ (this can be also appreciated from Figure \ref{figHamom0}). 
 \begin{figure}[h]
\begin{center}
\includegraphics[scale=0.65, clip=true]{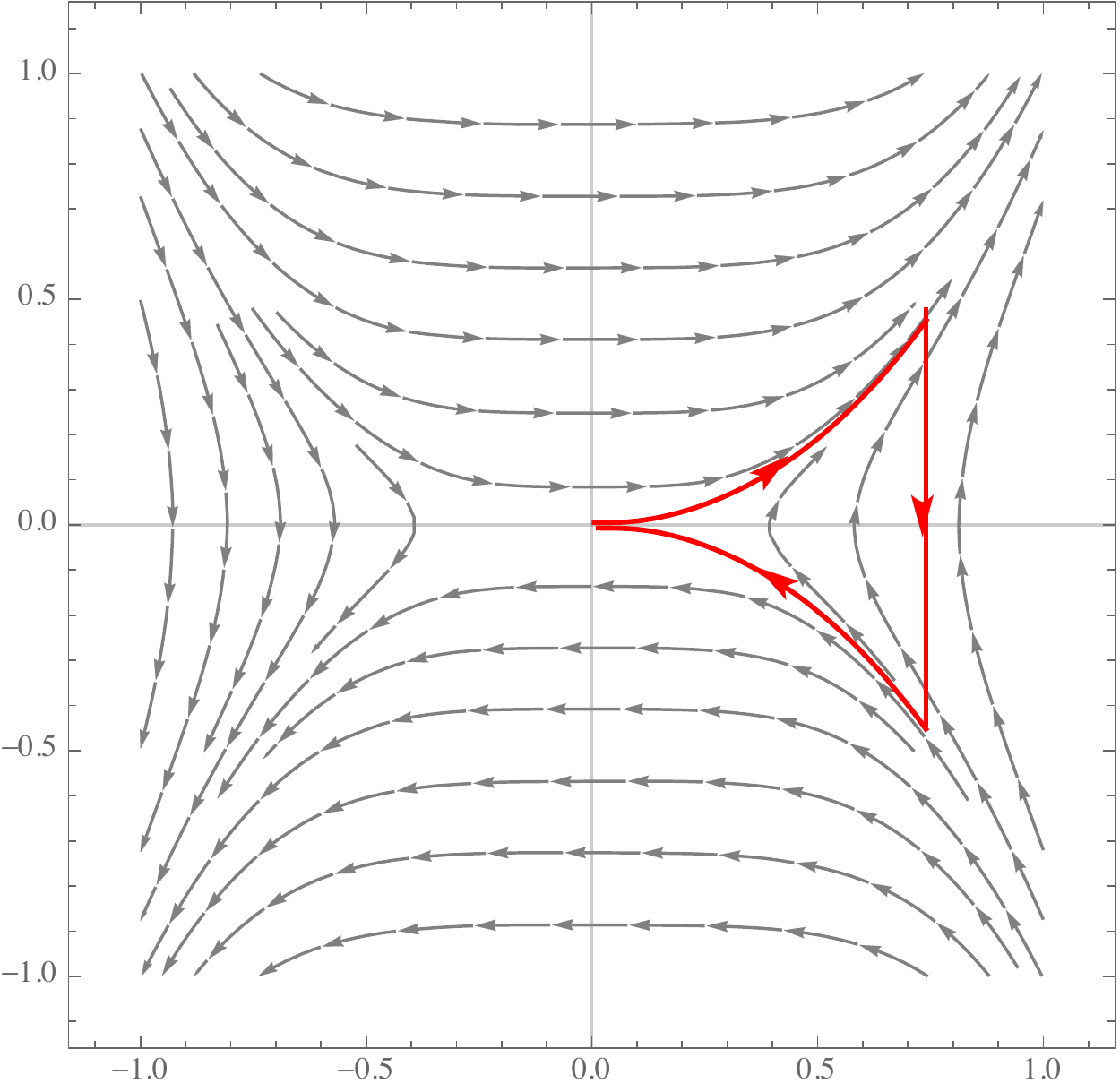}
\end{center}
\caption{Dynamics in the $(\phi, \phi')$-plane for equation \eqref{eqdifx} with $\omega = 0$ and $f(x) = - x (1+x^2)$, that is, for parameter values $\lambda_1 = \lambda_2 = -1$ in the case of a quintic/cubic nonlinearity with $p = 3$. The solutions to \eqref{reg} - \eqref{eqdifxre} leave the origin along the unstable manifold (trajectory in red; color online) for positive values of $\phi_0$, makes a jump in $\phi'_0$ of intensity $Z > 0$ to the stable manifold on the same side of the phase plane and returns to the origin along it. Both stable and unstable manifolds at the origin are tangent to the $\phi$-axis, yielding the algebraic decay of the profile \eqref{nutorwithZ1} as $|x| \to \infty$.}\label{figHamom0}
\end{figure}
 \end{remark}

\section{Stability theory}
\label{secstab}

In this section we prove Theorems \ref{main} and \ref{main2}, based on the minimization of the charge/energy functional \eqref{ge} and on the uniqueness (modulo rotations) of the even-positive profiles in \eqref{numeratorwithZ1} and \eqref{nutorwithZ1}.

\subsection{Description of the set of critical points}
Let us consider the functional $G_{\omega}:H^1(\mathbb R)\to \mathbb R$ for values $\omega \leq 0$, defined as
\begin{equation}
\label{genfunctional}
G_{\omega}(v)=\frac{1}{2}\|v_x \|^2_{L^2}-\frac{Z}{2}|v(0)|^{2}-\frac{\omega}{2}\|v\|^2_{L^2}-\frac{1}{2}\int_{-\infty}^{\infty} g(|v(x)|^2)dx,
\end{equation}
and the set of critical point associated to $G_{\omega}$ as
\[
{\cal A}_{\omega}=\{v\in H^1(\mathbb R) :  G'_{\omega}(v)=0, v\neq 0\}.
\]
Here $g = g(\cdot)$ is the antiderivative of any function $f = f(\cdot)$ satisfying assumptions \eqref{ci} - \eqref{cii}.
We notice that for  $\phi\in {\cal A}_{\omega}$ we have the relation
\[
G'_{\omega}(\phi)=A_Z\phi-\omega\phi-f(|\phi|^2)\phi
\]
in the sense that for every $\chi\in H^1(\mathbb R)$
\[
\langle G'_{\omega}(\phi), \chi\rangle =F_Z[\phi, \chi]-\langle \omega\phi+f(|\phi|^2)\phi, \chi\rangle.
\]
Observe that the operator $u\to G'_{\omega}(u)$ is continuous from $H^1(\mathbb R)$ to $H^{-1}(\mathbb R)$. So, we have that $\phi$ is a solution to equation \eqref{genod} if and only if $\phi\in H^1(\mathbb R)\setminus \{0\}$ and $G'_{\omega}(\phi)=0$ ($\phi\in {\cal A}_{\omega}$). Thus we have that \eqref{genod} is the Euler-Lagrange equation associated to the functional $G_\omega$. Moreover, for 
$\phi\in {\cal A}_{\omega}$ it follows, by Lemma \ref{regul}, that $\phi$ satisfies \eqref{reg} - \eqref{eqdifxre}.

The following Lemmata show the non-existence of non-trivial solutions and the uniqueness of positive solutions for \eqref{genod}.

\begin{lemma}
\label{nule}
Let $1 < p < \infty$, $Z>0$ and let $\omega \in \R$ be such that $\omega+\frac{Z^2}{4}\leq 0$. Then  the set ${\cal A}_{\omega}$ is empty. 
\end{lemma}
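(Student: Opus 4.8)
The plan is to argue by contradiction using the variational (Pohozaev-type) identity obtained by pairing equation \eqref{genod} with the solution itself, together with the sharp spectral lower bound for the form $F_Z$ recorded in \eqref{boundbelo}.

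Suppose, for contradiction, that $\phi \in {\cal A}_{\omega}$, so that $\phi \in H^1(\mathbb{R}) \setminus \{0\}$ solves \eqref{genod} in the distributional sense \eqref{distri}. First I would take $\chi = \phi$ in \eqref{distri} (equivalently, use $\langle A_Z\phi,\phi\rangle = F_Z[\phi,\phi]$), which yields the identity
\[
\|\phi_x\|_{L^2}^2 - Z|\phi(0)|^2 - \omega\|\phi\|_{L^2}^2 - \int_{-\infty}^{+\infty} f(|\phi(x)|^2)|\phi(x)|^2\,dx = 0.
\]
Next I would invoke the spectral lower bound for $Z > 0$, namely $F_Z[\phi,\phi] = \|\phi_x\|_{L^2}^2 - Z|\phi(0)|^2 \geq -\tfrac{Z^2}{4}\|\phi\|_{L^2}^2$, which is sharp because $-Z^2/4$ is the bottom of the spectrum of $A_Z$. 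Substituting into the identity and rearranging gives
\[
\int_{-\infty}^{+\infty} f(|\phi(x)|^2)|\phi(x)|^2\,dx \geq -\Big(\tfrac{Z^2}{4} + \omega\Big)\|\phi\|_{L^2}^2.
\]

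The contradiction then follows from two sign observations. On the one hand, the hypothesis $\omega + \tfrac{Z^2}{4} \leq 0$ makes the right-hand side nonnegative. On the other hand, conditions \eqref{ci}--\eqref{cii} force $f(s) < 0$ for every $s > 0$ (since $f(0)=0$ and $f' < 0$ on $(0,\infty)$), while Lemma \ref{positive} guarantees $|\phi(x)| > 0$ for all $x$; hence the integrand $f(|\phi|^2)|\phi|^2$ is strictly negative, so the left-hand side is strictly negative (and finite, e.g.\ by the Gagliardo--Nirenberg embedding of Remark \ref{remGagliardoNir} when $f$ has the form \eqref{genf12}). These two facts are incompatible, contradicting $\phi \in {\cal A}_{\omega}$. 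Therefore ${\cal A}_{\omega}$ is empty.

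I do not anticipate a serious obstacle here: once the pieces are assembled, this is essentially a one-line energy estimate. The only point requiring genuine care is the use of the \emph{sharp} constant $-Z^2/4$ in the spectral bound, for the threshold $\omega + Z^2/4 \le 0$ is precisely the value below which the attractive $\delta$-potential contribution can no longer be compensated; any looser bound would fail to close the argument. One should also confirm that the nonlinear integral converges so that the pairing $\langle f(|\phi|^2)\phi,\phi\rangle$ is well defined, which is immediate from $\phi \in H^1(\mathbb{R})$ and the embeddings in Remark \ref{remGagliardoNir}.
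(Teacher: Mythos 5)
Your proposal is correct and follows essentially the same route as the paper: testing \eqref{distri} with $\chi=\phi$, invoking the sharp lower bound $\langle A_Z\phi,\phi\rangle\geq -\tfrac{Z^2}{4}\|\phi\|_{L^2}^2$ for $Z>0$, and deriving a sign contradiction from $f<0$ on $(0,\infty)$. The only cosmetic difference is that you invoke Lemma \ref{positive} to make the integrand strictly negative everywhere, while the paper needs only that $\phi$ is non-trivial and continuous, so that $f(|\phi|^2)|\phi|^2<0$ on a set of positive measure.
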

\begin{proof}
We argue by contradiction. If there exists $h\in H^1(\mathbb R)\setminus \{0\}$ satisfying the stationary problem $G'_{\omega}(h)=0$, then 
\[
0=\frac{d}{ds}G(sh)\Big |_{s=1}=\langle G'(h),h\rangle=\langle A_Z h,h\rangle
-\omega\|h\|^2_{L^2}-\int_{-\infty}^{\infty} f(|h(x)|^2)|h(x)|^2dx.
\]
Now, since for $Z>0$ 
\[
\langle A_Z h,h\rangle\geq -\frac{Z^2}{4}\|h\|^2_{L^2}
\]
for all $h\in H^1(\mathbb R)$, we then obtain
\begin{equation}\label{inew}
\begin{aligned}
0&=\|h_x \|^2_{L^2}-Z|h(0)|^{2}-\omega\|h\|^2_{L^2}-\int_{-\infty}^{\infty} f(|h(x)|^2)|h(x)|^2dx\\
&\geq -(Z^2/4+\omega)\|h\|^2_{L^2}-\int_{-\infty}^{\infty} f(|h(x)|^2)|h(x)|^2dx\\
&\geq-\int_{-\infty}^{\infty} f(|h(x)|^2)|h(x)|^2dx>0,
\end{aligned}
\end{equation}
where we have used the fact that $-f \geq 0$ (an increasing continuous function) on the interval $(0,\infty)$ with $f(0)=0$ and that $h$ is a non-trivial continuous function. The inequalities in \eqref{inew} provide a contradiction. Hence ${\cal A}_\omega=\varnothing$, as claimed.
\end{proof}

\begin{lemma}\label{cinco} 
Let $1 < p < \infty$ and $Z\in\mathbb{R} \setminus \{0\}$. If $\omega>0$ then ${\cal A}_{\omega}=\varnothing$.
\end{lemma}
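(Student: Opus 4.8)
The plan is to argue by contradiction using the conserved first integral \eqref{eqdifxre}, exploiting that any non-trivial solution must decay at infinity while the potential term $g$ is subquadratic near the origin. Suppose, contrary to the claim, that there exists $\phi \in {\cal A}_{\omega}$. By Lemma \ref{regul} the profile satisfies \eqref{reg}--\eqref{eqdifxre}; in particular the pointwise identity
\[
|\phi'(x)|^2 + \omega|\phi(x)|^2 + g(|\phi(x)|^2) = 0, \qquad x \neq 0.
\]
Moreover, Lemma \ref{positive} guarantees $|\phi(x)| > 0$ for every $x \in \mathbb{R}$, while \eqref{complimitado} gives $|\phi(x)| \to 0$ as $|x| \to \infty$. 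Hence the continuous, strictly positive function $|\phi|$ attains arbitrarily small strictly positive values.

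First I would record the behaviour of $g$ near the origin, using only the general hypotheses on $f$. Writing $s = |\phi(x)|^2$, since $f$ is continuous with $f(0) = 0$ by \eqref{ci}, one has
\[
|g(s)| = \Big| \int_0^s f(t)\,dt \Big| \leq s \sup_{0 < t \leq s}|f(t)| = o(s), \qquad s \to 0^+,
\]
because $\sup_{0 < t \leq s}|f(t)| \to 0$. Consequently $g(s)/s \to 0$ and
\[
-\omega s - g(s) = -s\left(\omega + \frac{g(s)}{s}\right) = -s\big(\omega + o(1)\big),
\]
which, since $\omega > 0$, is strictly negative for all $s \in (0,\delta)$ for some $\delta > 0$.

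Now I would close the argument. By the first paragraph I can choose $x_0 \neq 0$ with $0 < |\phi(x_0)|^2 < \delta$. Evaluating \eqref{eqdifxre} at $x_0$ and invoking the sign computation above yields
\[
|\phi'(x_0)|^2 = -\omega|\phi(x_0)|^2 - g(|\phi(x_0)|^2) < 0,
\]
which is impossible. Therefore no non-trivial solution can exist and ${\cal A}_{\omega} = \varnothing$.

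The reason a direct energy/testing argument in the spirit of Lemma \ref{nule} does not apply here is that, for $\omega > 0$, the term $-\big(\tfrac{Z^2}{4} + \omega\big)\|\phi\|_{L^2}^2$ carries an indefinite sign relative to $-\int f(|\phi|^2)|\phi|^2 > 0$, so pairing $G'_\omega(\phi)$ with $\phi$ produces no contradiction; this is precisely why I rely on the conserved Hamiltonian. The only genuinely delicate point is the near-origin asymptotics $g(s) = o(s)$, which must be extracted solely from \eqref{ci}--\eqref{cii} rather than from the explicit form \eqref{genf12}; the remainder is a sign inspection combined with the decay \eqref{complimitado} and the positivity supplied by Lemma \ref{positive}.
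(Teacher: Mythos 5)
Your proof is correct and follows essentially the same route as the paper: a contradiction obtained from the first integral \eqref{eqdifxre}, combining the decay \eqref{complimitado}, the non-vanishing of $\phi$ from Lemma \ref{positive}, and the fact that the potential term is $o(s)$ as $s \to 0^+$ so that $\omega > 0$ forces $|\phi'(x_0)|^2 < 0$ at points where $|\phi|$ is small. The paper packages this last point via the mean value theorem for integrals, writing $g(|\phi(x)|^2) = f(r(x))|\phi(x)|^2$ with $f(r(x)) \to 0$ as $|x| \to \infty$, which is equivalent to your direct estimate $|g(s)| \leq s \sup_{0 < t \leq s} |f(t)| = o(s)$.
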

\begin{proof}
We proceed by contradiction. If there exists $\phi\in H^1(\mathbb R)\setminus \{0\}$ satisfying \eqref{reg} - \eqref{eqdifxre} then, by the mean value theorem for integrals, we can rewrite \eqref{eqdifxre} in the following form:
\begin{equation}\label{meanv}
|\phi'(x)|^2+[\omega+ f(r(x))]|\phi(x)|^2=0, \qquad \text{for all } \; x\neq 0, 
\end{equation}
where $r(x)\in(0, |\phi(x)|^2)$. Now, since $\lim_{|x|\to\infty}|\phi(x)|^2=0$ and $f$ is a continuous function at $x=0$ with $f(0)=0$, we obtain
\[
\lim_{|x|\to\infty}f(r(x))=0.
\]
Thus, there exists $L>0$ such that $0<f(r(x))+\frac{\omega}{2}$,  for all $x$ with $|x|>L$.  Therefore, 
\[
0<|\phi'(x)|^2+[\omega+ f(r(x))]|\phi(x)|^2, \quad \text{for all } \; x>L,
\]
a contradiction with \eqref{meanv}.
\end{proof}

\begin{lemma}
\label{Z<0} 
Suppose $f = f(\cdot)$ is any function satisfying assumptions \eqref{ci} - \eqref{cii}. Let $\omega\in \mathbb R$. Then, we have that ${\cal A}_{\omega}=\varnothing$.
\end{lemma}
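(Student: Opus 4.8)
The plan is to argue by contradiction, mirroring \emph{verbatim} the proof of Lemma \ref{nule}: for $Z<0$ the quadratic form attached to $A_Z$ is manifestly nonnegative, and hypotheses \eqref{ci}--\eqref{cii} pin down the sign of the nonlinear contribution. (We treat here the case $Z<0$, which is the regime addressed by the statement.) First I would dispatch the subcase $\omega>0$ immediately by invoking Lemma \ref{cinco}, which already yields ${\cal A}_\omega=\varnothing$ for every $Z\in\mathbb R\setminus\{0\}$, and hence for $Z<0$. It therefore suffices to handle $\omega\le 0$.

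Assume, for contradiction, that there exists $\phi\in{\cal A}_\omega$, i.e.\ $\phi\in H^1(\mathbb R)\setminus\{0\}$ with $G'_\omega(\phi)=0$. Pairing the Euler--Lagrange identity with $\phi$ and using $\langle G'_\omega(\phi),\phi\rangle=F_Z[\phi,\phi]-\omega\|\phi\|_{L^2}^2-\int_{-\infty}^{\infty}f(|\phi|^2)|\phi|^2\,dx$ together with $F_Z[\phi,\phi]=\|\phi_x\|_{L^2}^2-Z|\phi(0)|^2$, I obtain
\[
0=\|\phi_x\|_{L^2}^2-Z|\phi(0)|^2-\omega\|\phi\|_{L^2}^2-\int_{-\infty}^{\infty}f(|\phi(x)|^2)|\phi(x)|^2\,dx.
\]
I would then inspect the four terms on the right: $\|\phi_x\|_{L^2}^2\ge 0$ trivially; $-Z|\phi(0)|^2\ge 0$ because $Z<0$; $-\omega\|\phi\|_{L^2}^2\ge 0$ because $\omega\le 0$; and the nonlinear term is \emph{strictly} positive. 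Adding the contributions forces the right-hand side to be strictly positive, contradicting its equality with $0$, whence ${\cal A}_\omega=\varnothing$.

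The only step that needs a word of justification -- and thus the sole (minor) obstacle -- is the strict positivity of the nonlinear term. From \eqref{ci}--\eqref{cii} one has $f(0)=0$ and $f'<0$ on $(0,\infty)$, so $f(s)=\int_0^s f'(\sigma)\,d\sigma<0$ for every $s>0$; since $\phi\not\equiv 0$, the integrand $-f(|\phi|^2)|\phi|^2$ is nonnegative everywhere and strictly positive on a set of positive measure, giving $-\int f(|\phi|^2)|\phi|^2\,dx>0$. Notice that for $Z<0$ the nonnegativity of $F_Z[\phi,\phi]$ is read off termwise, so one need not even invoke the spectral bound \eqref{boundbelo}.

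As an alternative that avoids splitting off $\omega>0$, one could instead run the phase-plane argument of Remark \ref{phaseplane}: for $\omega\le 0$ the first integral \eqref{eqdifxre} gives $|\phi'|^2=-\omega|\phi|^2-g(|\phi|^2)>0$ on all of $\mathbb R\setminus\{0\}$ (using $g<0$ on $(0,\infty)$ and $\phi\neq 0$ from Lemma \ref{positive}), so by the decay \eqref{complimitado} the derivative satisfies $\phi'>0$ on $(-\infty,0)$ and $\phi'<0$ on $(0,\infty)$; the jump condition \eqref{condsal} then reads $\phi'(0+)-\phi'(0-)=-Z\phi(0)$, whose left-hand side is negative while its right-hand side is positive for $Z<0$, a contradiction. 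I would nevertheless present the variational computation above as the main line, since it reproduces the structure of Lemma \ref{nule} and requires no appeal to the regularity and nonvanishing results.
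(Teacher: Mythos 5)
Your proof is correct, but it takes a genuinely different route from the paper's. The paper first reduces to a real, everywhere-positive solution via Lemmas \ref{positive} and \ref{carac}, then derives \emph{two} integral identities --- multiplying \eqref{eqdifx} by $\phi$ (using the jump condition \eqref{condsal}) and by $x\phi'$ (a Pohozaev-type identity) --- and adds them so that the $\omega$-terms cancel; a mean-value estimate using the monotonicity \eqref{cii} then shows the resulting right-hand side is strictly positive while the left-hand side $\tfrac{Z}{2}\phi^2(0)-\int(\phi')^2\,dx$ is negative for $Z<0$. This yields the conclusion uniformly in $\omega\in\mathbb{R}$, with no case splitting. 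You instead split off $\omega>0$ via Lemma \ref{cinco} and, for $\omega\leq 0$, use only the single Nehari-type pairing $\langle G_\omega'(\phi),\phi\rangle=0$ with termwise sign inspection, exactly as in Lemma \ref{nule}. Your route is more elementary in the regime $\omega\leq 0$: it works directly for complex-valued $\phi$, needing neither the reduction to positive real solutions nor the regularity properties of Lemma \ref{regul} (your strict-positivity justification of the nonlinear term is sound, granting --- as the paper itself does --- continuity of $f$ at $0$); the trade-off is that it delegates $\omega>0$ to Lemma \ref{cinco}, whose proof does invoke Lemma \ref{regul}, whereas the paper's Pohozaev argument handles all $\omega$ in one stroke. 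Two small remarks on your aside: your alternative phase-plane argument does \emph{not} ``avoid splitting off $\omega>0$'' --- as written it is restricted to $\omega\leq 0$ --- and its monotonicity step ($\phi'>0$ on $(-\infty,0)$, $\phi'<0$ on $(0,\infty)$) presumes $\phi$ real and positive, which requires Lemma \ref{carac} in addition to Lemma \ref{positive}; neither issue affects your main line.
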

\begin{proof}
We proceed by contradiction. By Lemmas \ref{positive} and \ref{carac} we can consider a real valued $\phi$ solution to \eqref{reg} - \eqref{eqdifxre} such that $\phi>0$. Then, multiply \eqref{eqdifx} by $\phi$, integrate by parts and use \eqref{condsal} to obtain
\begin{equation}
\label{1a}
\frac{Z}{2}\phi^2(0)-\frac12 \int_{-\infty}^{+\infty} (\phi')^2dx +\frac{\omega}{2} \int_{-\infty}^{+\infty} \phi^2dx + \frac{1}{2} \int_{-\infty}^{+\infty} f(\phi^2) \phi^2 \, dx = 0.
%
\end{equation}
Next, multiplying \eqref{eqdifx} by $x\phi'$ and integrating by parts yields
\begin{equation}
\label{1b}
-\frac12 \int_{-\infty}^{+\infty} (\phi')^2dx -\frac{\omega}{2} \int_{-\infty}^{+\infty} \phi^2dx - \frac{1}{2} \int_{-\infty}^{+\infty} g(\phi^2) \, dx = 0.
\end{equation}
Thus, from \eqref{1a} and \eqref{1b} we obtain 
\begin{equation}
\label{1c}
\frac{Z}{2}\phi^2(0)-\int_{-\infty}^{+\infty} (\phi')^2dx = - \frac{1}{2} \int_{-\infty}^{+\infty} f(\phi^2) \phi^2 \, dx + \frac{1}{2} \int_{-\infty}^{+\infty} g(\phi^2) \, dx.  
%
\end{equation}
Like in the proof of Lemma \ref{cinco}, let us write $g(\phi(x)^2) = f(r(x)) \phi(x)^2$ for all $x \in \R$ where $r(x) \in (0, \phi(x)^2)$. Since $f$ is monotone decreasing we have $- f(r(x)) < - f(\phi(x)^2)$. This yields $- g(\phi(x)^2) < - f(\phi(x)^2) \phi(x)^2$. Integrating this inequality we obtain,
\[
- \int_{-\infty}^{+\infty} f(\phi^2) \phi^2 \, dx + \int_{-\infty}^{+\infty} g(\phi^2) \, dx > 0,
\]
that is, the right hand side of \eqref{1c} is positive, a contradiction with the sign of the left hand side when $Z < 0$. This finishes the proof.
\end{proof}

Let us now specialize the function $f$ to the particular form \eqref{genf12}, where the parameters $\lambda_1<0$, $\lambda_2 <0$, $Z > 0$ and $\omega <0$ are such that standing wave profiles do exist (see Theorem \ref{solution}). The following lemma characterizes the set of non-trivial critical points in that case.

\begin{lemma}
\label{seis}
Let $p>1$, $\lambda_1<0$, $\lambda_2<0$, $Z>0$ and $\omega$ such that $-\frac{p\lambda_1^2}{(p+1)^2\lambda_2}<-\omega<\frac{Z^2}{4}$. Consider $f(x) = \lambda_1 x^{(p-1)/2} + \lambda_2 x^{p-1}$. Then ${\cal A}_{\omega}=\{e^{i\theta}\phi_{\omega}: \theta\in\mathbb{R}\}$ where $\phi_{\omega}$ denotes the standing wave profile given in \eqref{numeratorwithZ1}.
\end{lemma}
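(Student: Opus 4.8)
The plan is to characterize $\mathcal{A}_\omega$ completely by reducing an arbitrary non-trivial critical point to the real, positive profile $\phi_\omega$, then accounting for the symmetries. First I would invoke Lemma \ref{carac}: any $\phi \in \mathcal{A}_\omega$ satisfies either (i) $\Im(\phi) \equiv 0$, or (ii) $\Re(\phi) = c\,\Im(\phi)$ for some real constant $c$. In case (ii), writing $\phi = (c+i)\Im(\phi)$ and setting $\theta_0 = \arg(c+i)$, we see $e^{-i\theta_0}\phi$ is real-valued, so in either case $\phi = e^{i\theta_0}\psi$ for some real-valued $\psi$ solving the same equation \eqref{eqdifx} with the jump condition \eqref{condsal} and decay \eqref{complimitado}. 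By Lemma \ref{positive}, $\psi$ never vanishes, so it is of one sign; replacing $\psi$ by $-\psi$ (which absorbs into a phase shift of $\pi$) I may assume $\psi > 0$.

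The heart of the argument is then a uniqueness statement: \emph{the positive solution to \eqref{eqdifx}--\eqref{complimitado} with jump \eqref{condsal} for $Z>0$ is exactly $\phi_\omega$}. Here I would lean on the phase-plane picture of Remark \ref{phaseplane}. Since $\omega < 0$ and $f$ has the form \eqref{genf12} under the stated parameter constraints, the origin is a hyperbolic saddle of the Hamiltonian system for \eqref{eqdifx}, with $(0,0)$ its unique equilibrium. A positive solution decaying at both $\pm\infty$ must leave the origin along the unstable manifold (as $x \to -\infty$) and return along the stable manifold (as $x \to +\infty$), staying in the right half-plane $\{\phi > 0\}$. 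On each half-line $x<0$ and $x>0$ the solution is, up to translation, the explicit profile \eqref{matria} obtained by quadrature in Section \ref{secstanding}; the only freedom is the translation parameter, i.e. the shift $d$. The jump condition \eqref{condsal}, rewritten as $\phi'(-d) = \tfrac{Z}{2}\phi(-d)$ using evenness, forces $R_1(d) = Z/(2\sqrt{-\omega})$, and since $R_1$ is a decreasing diffeomorphism from $(-l_\omega,\infty)$ onto $(1,\infty)$, this pins down $d$ uniquely as $d = R_1^{-1}(Z/(2\sqrt{-\omega}))$, provided $Z/(2\sqrt{-\omega}) > 1$, which is exactly the hypothesis $-\omega < Z^2/4$. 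This uniquely produces $\phi_\omega$ from \eqref{numeratorwithZ1}.

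The one point requiring care is establishing that a positive solution must in fact be \emph{even}, so that the half-line analysis glues correctly and the jump condition takes the reduced form $\phi'(-d) = \tfrac{Z}{2}\phi(-d)$. I expect this to be the main obstacle. The cleanest route is to argue directly from the conserved quantity \eqref{eqdifxre}: since $|\phi'|^2 = -\omega|\phi|^2 - g(|\phi|^2)$ and $\phi > 0$, both the value $\phi(0)$ and the magnitude $|\phi'(0\pm)|$ are tied to the level set, and on each half-line $\phi$ is a monotone reparametrization of the explicit orbit \eqref{matria}. Matching the common value $\phi(0)$ on the two sides forces the two half-profiles to be reflections of one another, hence $\phi(x) = \phi(-|x|-d)$ as in \eqref{errsest}; this is precisely the construction carried out in Section \ref{secstanding}, now read in reverse as a uniqueness argument. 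Finally, reassembling the symmetries: the admissible real positive solution is the single function $\phi_\omega$, its negative $-\phi_\omega = e^{i\pi}\phi_\omega$ is the only other real solution, and restoring the phase $\theta_0$ from the first step yields $\mathcal{A}_\omega = \{e^{i\theta}\phi_\omega : \theta \in \mathbb{R}\}$, as claimed.
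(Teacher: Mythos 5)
Your proof is correct, but the core of your argument takes a genuinely different route from the paper's. For the reduction to a real-valued solution you invoke Lemma \ref{carac} (proportionality of real and imaginary parts), whereas the paper writes $g = e^{i\theta(x)}\rho(x)$ in polar form, derives $\rho^2\theta' = K$ on each half-line, and forces $K = 0$ from the decay of $\rho$; the two reductions are interchangeable (the paper itself uses Lemmas \ref{positive} and \ref{carac} this way in the proof of Lemma \ref{Z<0}). The real divergence is in the uniqueness of the positive solution. You classify all positive decaying half-line orbits as translates of the explicit quadrature profile \eqref{matria} (in effect rigorizing Remark \ref{phaseplane}), derive evenness by matching $\phi(0)$ through strict monotonicity of \eqref{matria}, and pin the shift via the diffeomorphism $R_1$. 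The paper never proves evenness nor classifies half-line solutions: it determines the Cauchy data at the defect outright. Integrating the equation against $v'$ gives $\frac{1}{2}v'(0\pm)^2 + F(v(0\pm)) = 0$, hence $|v'(0+)| = |v'(0-)|$; the case $v'(0+) = v'(0-)$ is excluded since the jump condition would force $v(0) = 0$ and then either $v \equiv 0$ or a sign change, so $v'(0+) = -\frac{Z}{2}v(0)$ and $P(v(0)) = 0$ with $P(c) = \frac{1}{2}\bigl(\frac{Z^2}{4}+\omega\bigr)c^2 + \frac{\lambda_1}{p+1}c^{p+1} + \frac{\lambda_2}{2p}c^{2p}$. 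A quadratic analysis in $r = c^{p-1}$ shows $P$ has a unique positive root $c_0$, and Cauchy--Lipschitz uniqueness for the half-line initial value problems with data $\bigl(c_0, \mp\frac{Z}{2}c_0\bigr)$ identifies $v$ with $\phi_\omega$ directly, since $\phi_\omega$ is known to solve them. What each approach buys: the paper's route needs no formula for, nor classification of, half-line solutions (only local Lipschitz continuity of the nonlinearity and the known solution $\phi_\omega$), so it is more robust; yours is more geometric and reuses the Section \ref{secstanding} construction read backwards, at the modest cost that your assertion ``every positive decaying half-line solution is a translate of \eqref{matria}'' deserves one explicit line — namely that $(\phi')^2 = -\omega\phi^2 - g(\phi^2) > 0$ for all $\phi > 0$ under the stated sign hypotheses, so $\phi'$ never vanishes on a half-line and separation of variables (equivalently, uniqueness of orbits of the autonomous system through a given phase point) applies. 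This is a justifiable gap of detail, not of substance.
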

\begin{proof} 
It is clear that for all $\theta\in\mathbb{R}$, $e^{i\theta}\phi_{\omega}\in{\cal A}_{\omega}$. Conversely, if $g\in{\cal A}_{\omega}$, then $g$ satisfies \eqref{reg} - \eqref{eqdifxre} and by Lemma \ref{positive} $|g|>0$. We will  show that
there exist $\theta\in\mathbb{R}$ such that $g(x)=e^{i\theta}\phi_\omega(x)$ for all $x\in\mathbb{R}$. 

Initially, we show that $\phi_\omega\in D(A_Z)$ is the unique positive solution for \eqref{genod}. Indeed,  from Lemma \ref{positive} is sufficient to consider $v\in H^1(\mathbb R)$ satisfying $v(x)>0$ for all $x\in \mathbb R$ and the properties \eqref{reg} - \eqref{eqdifxre}. We consider the following polynomial $P:(0, +\infty)\to \mathbb R$ defined by
\begin{equation}\label{rela0}
P(c)=\frac12 \frac{Z^2}{4} c^2 + F(c),\quad F(c)=\int_0^c \omega t + f(t^2)t \, dt,
\end{equation}
and the following initial value problem (IVP) on $(0,+\infty)$,
\begin{equation}\label{ivp}
\left\{
\begin{aligned}
-\psi''(x)&=H(\psi(x)),\quad x> 0,\\
\psi(0)&=c_0,\\
\psi'(0)&=-\frac{Z}{2}c_0,
\end{aligned}
\right.
\end{equation}
where $H(\psi)=\omega \psi +f(|\psi|^2)\psi$ and $c_0$ is the unique positive root of $P$ (to be determined below). Thus, since $H$ is a locally Lipschitz function around zero, we have that the IVP \eqref{ivp} has a unique  (positive) solution and it is given exactly by $\phi_\omega$. Indeed, since $v$ satisfies $-v''=H(v)$ we obtain by integration for any $R>0$ that
\[
\int_0^R-v'(x)v''(x)dx=\int_0^R H(v(x))v'(x)dx=\int_0^R F(v(x))v'(x)dx=F(v(R))-F(v(0+)).
\]
Thus, for all $R>0$, $\frac{1}{2}(v'(0+))^2-\frac{1}{2}(v'(R))^2-F(v(R))+F(v(0+))=0$.
Then for $R\to +\infty$ we obtain
\begin{equation}\label{rela1}
\frac{1}{2}(v'(0+))^2+F(v(0+))=0.
\end{equation}
Similarly, we have 
\begin{equation}\label{rela2}
\frac{1}{2}(v'(0-))^2+F(v(0-))=0.
\end{equation}
Next, since $v$ is continuous in $x=0$ we get from \eqref{rela1} and \eqref{rela2} that
$|v'(0+)|=|v'(0-)|$. Now we suppose that $v'(0+)=v'(0-)$. Hence, from Lemma \ref{regul}, there holds $v(0)=0$. Now, we divide our analysis into two steps:
\begin{itemize}
\item[(i)] if  $v'(0+)=v'(0-)=0$ then $v'(0)=0$ and the IVP \eqref{ivp} with $c_0=0$ has a unique solution, $v\equiv 0$. This is a contradiction with $v(x)>0$ for all $x \in \R$; 
\item[(ii)] if  $v'(0+)=v'(0-)\neq 0$ then there is $x_0\in \mathbb R$ with $v(x_0)<0$ for $x_0$ close to zero, which cannot happen.
\end{itemize}

From the analysis above, we necessarily have $v'(0+)=-v'(0-)$ and, from the jump condition, we obtain $v'(0+)=-\frac{Z}{2}v(0)$. Now, let us define $v(0+)=c>0$. From \eqref{rela1} we then obtain
\begin{equation}\label{rela3}
\frac{1}{2} \frac{Z^2}{4}v^2(0+)+F(v(0+))=0.
\end{equation}
Thus from \eqref{rela0} it follows that $P(v(0+))=0$. Next we determine the existence of a unique zero for $P$ on $(0,+\infty)$. Since
\[
P(c)=\frac{1}{2} \Big(\frac{Z^2}{4}+\omega \Big)c^2 + \frac{\lambda_1}{p+1}c^{p+1}+\frac{\lambda_2}{2p}c^{2p},
\]
we have $P'(0+)>0$, $\lim_{c\to +\infty} P(c)=-\infty$, and there is a unique critical point $a>0$ with $P(a)>0$. Indeed, $P'(c)=0$ if and only if $r=c^{p-1}$ satisfies the quadratic equation $\lambda_2 r^2+ \lambda_1 r +\frac{Z^2}{4}+\omega=0$. Since 
$\frac{Z^2}{4}+\omega>0$ and $\lambda_1, \lambda_2<0$, this polynomial has exactly two different real roots, with a unique positive root $r_0 > 0$. Thus $a=r_0^{\frac{1}{p-1}}$. Therefore,  there is $c_0>0$ ($c_0>a$) such that $P(c_0)=0$ and $c_0$ only depends {\it a priori} on the parameters $Z, \omega, \lambda_1,\lambda_2$.

Then, since $P(v(0+))=0$ with $v(0+)>0$ we need to have $v(0+)=c_0$. Therefore, $v$ is the unique (local) solution for the IVP \eqref{ivp}, at least for $x\in (0, a_0)$. Now, since $v\in C^j(0, +\infty)$, $j=0,1,2$ and $v(x)\to 0$ as $x\to +\infty$, it follows that $v\in L^{\infty}(0, +\infty)$. Therefore, from standard ODE arguments we can choose $a_0=+\infty$ and, consequently, the unique solution of \eqref{ivp} on $(0, +\infty)$ is positive. A similar analysis (but now on $(-\infty, 0)$) shows that $v$ is the unique solution to \eqref{ivp} on $(-\infty, 0)$.  Therefore, since $\phi_\omega$ is a continuous profile satisfying the IVP \eqref{ivp} on $(-\infty, 0)$ and $(0,+\infty)$, necessarily $v=\phi_\omega$.

Lastly, since $g\in C^2(0,+\infty)$ we can write $g(x)=e^{i\theta(x)}\rho(x)$, where $\theta, \rho\in C^2(0,+\infty)$, real-valued functions,  and $\rho>0$ ($|g|>0$). Thus, by substituting $g$ in \eqref{eqdifx} and taking real and imaginary part we obtain the system
\begin{equation}\label{system}
\left\{
\begin{aligned}
\theta''\rho+2\theta'\rho' &=0,\quad x> 0,\\
-(\theta')^2\rho+\rho''+\omega \rho +f(|\rho|^2)\rho&=0,\quad x> 0.
\end{aligned}
\right.
\end{equation}
Therefore, there is a real constant $K$ such that $\rho^2\theta'=K$ on $(0,+\infty)$. Now since $|g'|$ is bounded we get that $\rho^2(\theta')^2=\frac{K^2}{\rho^2}$ is bounded on $(0,+\infty)$. But, since $\rho(x)\to 0$ as $x\to +\infty$ we need to have $K=0$. Then $\theta(x)=\theta_0$ for all $x\in (0,+\infty)$. Thus, $g(x)=e^{i\theta_0}\rho(x)$ for all $x\in (0,+\infty)$. From second equation in \eqref{system} we obtain that $\rho$ is a positive solution for \eqref{eqdifx} and satisfying \eqref{reg}, \eqref{condsal}, \eqref{complimitado} and \eqref{eqdifxre}. Thus, by the analysis above we necessarily have that $\rho(x)=\phi_\omega(x)$ for all $x\in (0,+\infty)$. A similar analysis shows that $g(x)=e^{i\theta_1}\phi_\omega(x)$ for all $x\in (-\infty, 0)$. Finally, by continuity we obtain $e^{i\theta_1}=e^{i\theta_0}$ and hence $g(x)=e^{i\theta_0}\phi_\omega(x)$ for all $x\in \mathbb R$. This finishes the proof.
\end{proof}

A similar uniqueness result for $\omega = 0$ is obtained. We omit the proof.

\begin{lemma}
\label{lemB}
Let $1 < p < 5$. Assume that $\lambda_1 < 0$, $\lambda_2< 0$, $Z > 0$, $\omega = 0$ and consider $f(x) = \lambda_1 x^{(p-1)/2} + \lambda_2 x^{p-1}$. Then $\mathcal{A}_0 = \{e^{i \theta} \phi_0 \, : \, \theta \in \R\}$ where $\phi_0$ is the equilibrium solution defined in \eqref{nutorwithZ1}.
\end{lemma}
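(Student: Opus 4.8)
The plan is to follow the proof of Lemma \ref{seis} almost verbatim, simply specializing the polynomial $P$ to the case $\omega = 0$. As before, the inclusion $\{e^{i\theta}\phi_0 : \theta \in \R\} \subseteq \mathcal{A}_0$ is immediate by direct substitution. For the reverse inclusion, I would take any $g \in \mathcal{A}_0$; by Lemma \ref{regul} it satisfies \eqref{reg}--\eqref{eqdifxre}, and by Lemma \ref{positive} we have $|g| > 0$. Invoking Lemma \ref{carac}, after a global phase rotation it suffices to show that a positive real-valued $H^1$ solution of \eqref{eqdifx} obeying the jump condition \eqref{condsal} and the decay \eqref{complimitado} must coincide with $\phi_0$.

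First I would reduce the identification of the positive solution to a uniqueness statement for the initial value problem \eqref{ivp} with $H(\psi) = f(|\psi|^2)\psi$ (the $\omega = 0$ case). Multiplying $-v'' = H(v)$ by $v'$ and integrating, using \eqref{complimitado}, gives the quadrature identity \eqref{rela1}; combined with $|v'(0+)| = |v'(0-)|$ and the jump condition this forces $v'(0+) = -\tfrac{Z}{2}v(0)$ and $P(v(0+)) = 0$, where now
\[
P(c) = \tfrac12 \tfrac{Z^2}{4} c^2 + \tfrac{\lambda_1}{p+1} c^{p+1} + \tfrac{\lambda_2}{2p} c^{2p}.
\]
The existence of a unique positive root $c_0$ is the one place where $\omega = 0$ enters: for $c > 0$, $P'(c) = 0$ is equivalent to $\lambda_2 r^2 + \lambda_1 r + \tfrac{Z^2}{4} = 0$ with $r = c^{p-1}$, whose product of roots $\tfrac{Z^2}{4\lambda_2}$ is negative since $\lambda_2 < 0$; hence there is exactly one positive critical point $a$. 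Since $P(0)=0$, $P$ is positive and increasing on $(0,a]$ with $P(a) > 0$, and $P(c) \to -\infty$ as $c \to \infty$, there is a unique positive zero $c_0 > a$. Because $p > 1$ the map $H$ is $C^1$ up to $0$, so the solution of \eqref{ivp} is unique and globally defined on each half-line, and therefore equals $\phi_0$. Finally, writing $g = e^{i\theta}\rho$ with $\rho > 0$ and substituting into \eqref{eqdifx} yields $\rho^2 \theta' = K$ for a constant $K$; boundedness of $|g'|$ together with $\rho \to 0$ forces $K = 0$, so $\theta$ is constant on each half-line, and continuity at the origin gives $g = e^{i\theta_0}\phi_0$ on all of $\R$.

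The main point requiring care is the degeneracy of the equilibrium $(0,0)$ when $\omega = 0$ (cf. Remark \ref{phaseplaneom0}): the profile $\phi_0$ now decays only algebraically rather than exponentially. This does not affect the uniqueness for \eqref{ivp}, which is anchored at the strictly positive datum $c_0$, nor the concluding argument, which uses only $\rho \to 0$. It is, however, precisely the reason for the hypothesis $1 < p < 5$: by Remark \ref{rempl5} this is exactly the range in which $\phi_0 \in H^1(\R)$, so that the comparison between $g$ and $\phi_0$ takes place within the correct function space. Collecting these observations establishes $\mathcal{A}_0 = \{e^{i\theta}\phi_0 : \theta \in \R\}$.
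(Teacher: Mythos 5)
Your proposal is correct and is essentially the proof the paper has in mind: the paper omits the argument for Lemma \ref{lemB} precisely because it is the specialization of the proof of Lemma \ref{seis} to $\omega=0$, which is what you carry out, with the right modified polynomial $P(c)=\tfrac12\tfrac{Z^2}{4}c^2+\tfrac{\lambda_1}{p+1}c^{p+1}+\tfrac{\lambda_2}{2p}c^{2p}$ and the correct reduction of $P'(c)=0$ to $\lambda_2 r^2+\lambda_1 r+\tfrac{Z^2}{4}=0$ with a unique positive root. Your closing observation that $1<p<5$ enters only through $\phi_0\in H^1(\R)$ (Remark \ref{rempl5}), while the degenerate, algebraically decaying equilibrium does not disturb the IVP uniqueness anchored at $c_0>0$, accurately pinpoints the only place where the $\omega=0$ case differs.
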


\subsection{Orbital stability of standing waves for $\omega\neq 0$}

This section is devoted to prove Theorem \ref{main}. Let us suppose that the parameter values satisfy $1 < p < \infty$, $Z > 0$, $\lambda_1 \leqq  0$, $\lambda_2 < 0$ and that $\omega$ is such that
\[
-\frac{p\lambda_1^2}{(p+1)^2\lambda_2}<-\omega<\frac{Z^2}{4}.
\]
Consider a function $f$ of the form \eqref{genf12}, that is,
\[
f(x) = \lambda_1 x^{(p-1)/2} + \lambda_2 x^{p-1},
\]
and the following  minimization problem associated to $G_{\omega}$ defined in \eqref{genfunctional},
\[
m(\omega)=\inf \{ G_{\omega}(v) \, : \, v \in H^1(\mathbb{R}) \},
\]
and the minimal set
\[
M(\omega)=\{ u \in H^1(\mathbb{R}) \, : \, G_{\omega}(u)=m(\omega)\}.
\]

\begin{lemma}
\label{siete}
$-\infty<m(\omega)<0$ and $M(\omega)\subset{\cal A}_{\omega}$.
\end{lemma}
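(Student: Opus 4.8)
The plan is to establish the two claims of Lemma \ref{siete} separately. For the bound $-\infty < m(\omega) < 0$, I would first show $m(\omega) < 0$ by exhibiting an explicit test function that makes $G_\omega$ negative. The natural candidate is the eigenfunction $\Psi_Z(x) = \sqrt{Z/2}\,e^{-(Z/2)|x|}$ associated to the negative eigenvalue $-Z^2/4$ of $A_Z$, scaled by a small parameter. Indeed, for $v = \epsilon \Psi_Z$ one computes $\tfrac12\|v_x\|_{L^2}^2 - \tfrac{Z}{2}|v(0)|^2 = -\tfrac{Z^2}{8}\epsilon^2$, so that
\[
G_\omega(\epsilon \Psi_Z) = \tfrac12\epsilon^2\Big(-\tfrac{Z^2}{4}-\omega\Big) - \tfrac{1}{2}\int_\R g(\epsilon^2|\Psi_Z|^2)\,dx.
\]
Since $-\tfrac{Z^2}{4}-\omega < 0$ by hypothesis, and since the nonlinear term is $o(\epsilon^2)$ as $\epsilon \to 0$ (because $g(x)/x \to 0$ as $x \to 0^+$, the leading power being $p+1 > 2$), the quadratic part dominates for small $\epsilon$, giving $G_\omega(\epsilon\Psi_Z) < 0$. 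This forces $m(\omega) < 0$.

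For the lower bound $m(\omega) > -\infty$, I would show $G_\omega$ is coercive, or at least bounded below, on $H^1(\R)$. The key is controlling the delta term $-\tfrac{Z}{2}|v(0)|^2$ and the nonlinear term. Since $\lambda_1 \leq 0$ and $\lambda_2 < 0$, we have $-g(|v|^2) \geq 0$ pointwise (as $g \leq 0$), so the nonlinear contribution $-\tfrac12\int_\R g(|v|^2)\,dx \geq 0$ is actually favorable and can be discarded for the lower bound. It then suffices to bound $\tfrac12\|v_x\|_{L^2}^2 - \tfrac{Z}{2}|v(0)|^2 - \tfrac{\omega}{2}\|v\|_{L^2}^2$ below. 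Using the spectral estimate $\|v_x\|_{L^2}^2 - Z|v(0)|^2 \geq -\tfrac{Z^2}{4}\|v\|_{L^2}^2$ recorded in Section \ref{secexist}, I obtain
\[
G_\omega(v) \geq \tfrac12\Big(-\tfrac{Z^2}{4}-\omega\Big)\|v\|_{L^2}^2 + \tfrac14\|v_x\|_{L^2}^2,
\]
after splitting off a quarter of the gradient term; but since $-\tfrac{Z^2}{4}-\omega < 0$ this bound is negative and not obviously finite. The cleaner route is to observe that the form $\|v_x\|_{L^2}^2 - Z|v(0)|^2 + (\tfrac{Z^2}{4}+\eta)\|v\|_{L^2}^2$ is equivalent to the $H^1$-norm for small $\eta > 0$ (this is the equivalent norm $\|\cdot\|_{X_{\mathcal A}}$ from the proof of Theorem \ref{cazi}), whence $G_\omega(v) \geq \tfrac12\big(-\tfrac{Z^2}{4}-\omega\big)\|v\|_{L^2}^2 + (\text{positive form}) \geq C > -\infty$ once one checks the indefinite $L^2$-coefficient is absorbed; since the discarded nonlinear term only helps, boundedness below follows.

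For the inclusion $M(\omega) \subset \mathcal{A}_\omega$, I would argue that any minimizer $u \in M(\omega)$ is nontrivial and is a critical point of $G_\omega$. Nontriviality is immediate because $G_\omega(u) = m(\omega) < 0 = G_\omega(0)$, so $u \neq 0$. Since $G_\omega$ is $C^1$ on $H^1(\R)$ (as recorded after \eqref{genfunctional}, the map $u \mapsto G'_\omega(u)$ is continuous from $H^1$ into $H^{-1}$) and $u$ is an unconstrained minimizer, the first-order optimality condition gives $G'_\omega(u) = 0$. Together with $u \neq 0$ this is exactly the membership $u \in \mathcal{A}_\omega = \{v \in H^1(\R) : G'_\omega(v) = 0,\ v \neq 0\}$.

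The main obstacle is the lower bound $m(\omega) > -\infty$, specifically handling the indefinite sign of the $L^2$-coefficient $-\tfrac{Z^2}{4}-\omega$. The favorable sign of the nonlinear term (from $\lambda_1 \leq 0$, $\lambda_2 < 0$) is what rescues coercivity: the defocusing quintic-type term $-\tfrac{\lambda_2}{2p}\|v\|_{L^{2p}}^{2p} \geq 0$ grows faster than any negative $L^2$ contribution can shrink, so one should retain rather than discard it and invoke a Gagliardo–Nirenberg interpolation (Remark \ref{remGagliardoNir}) to dominate the negative $\|v\|_{L^2}^2$ term by the positive $\|v\|_{L^{2p}}^{2p}$ and $\|v_x\|_{L^2}^2$ contributions, thereby securing a finite infimum.
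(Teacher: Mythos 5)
Your arguments for $m(\omega)<0$ and for $M(\omega)\subset{\cal A}_{\omega}$ are correct and essentially match the paper's proof (same test function, the eigenfunction of $A_Z$ at small amplitude, with the nonlinear remainder shown to be $o(\epsilon^2)$; nontriviality of minimizers from $m(\omega)<0=G_\omega(0)$ plus the unconstrained Euler--Lagrange condition). The genuine gap is in the lower bound $m(\omega)>-\infty$. First, note that there is no indefinite $L^2$-coefficient to absorb: under the standing hypothesis $\omega<0$, the term $-\frac{\omega}{2}\|v\|_{L^2}^2$ in $G_\omega$ is \emph{nonnegative}. You manufactured a negative $L^2$ term by invoking the spectral bound $\|v_x\|_{L^2}^2-Z|v(0)|^2\geq-\frac{Z^2}{4}\|v\|_{L^2}^2$, a step which discards the gradient entirely and is fatally lossy: after it, no recovery is possible, because the purely quadratic functional $\frac12\|v_x\|_{L^2}^2-\frac{Z}{2}|v(0)|^2-\frac{\omega}{2}\|v\|_{L^2}^2$ is genuinely unbounded below on $H^1(\R)$ (evaluate at $s\Psi_Z$ and let $s\to\infty$: the value is $\frac{s^2}{2}\big(-\frac{Z^2}{4}-\omega\big)\to-\infty$ since $-\omega<\frac{Z^2}{4}$). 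Hence your initial claim that the nonlinear term ``can be discarded'' is false --- the defocusing term is essential, as you later suspect. But the rescue you propose in the final paragraph does not work either: the Gagliardo--Nirenberg inequalities of Remark \ref{remGagliardoNir} run in the wrong direction (they bound $\|v\|_{L^{2p}}$ by $\|v\|_{L^2}$ and $\|v_x\|_{L^2}$, not conversely), and in fact no inequality of the form $\|v\|_{L^2}^2\leq C\big(\|v\|_{L^{2p}}^{2p}+\|v_x\|_{L^2}^2+1\big)$ can hold on all of $\R$: wide, low bumps $v_n(x)=n^{-\alpha}\chi(x/n)$ with $\frac{1}{2p}<\alpha<\frac12$ satisfy $\|v_n\|_{L^2}^2\sim n^{1-2\alpha}\to\infty$ while $\|v_n\|_{L^{2p}}^{2p}\sim n^{1-2p\alpha}\to 0$ and $\|\partial_x v_n\|_{L^2}^2\to 0$.

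The correct mechanism --- the paper's --- is \emph{local}, and it is already packaged as Lemma \ref{lemaux}. Since $\omega<0$ and $\lambda_1\leq 0$, one has $G_\omega(v)\geq R(v)$ with $R$ as in \eqref{deffuncR}, so the only negative contribution left is $-\frac{Z}{2}|v(0)|^2$, a point value. This is controlled by the Sobolev trace estimate on the bounded interval $(-1,1)$, namely $|v(0)|\leq\sqrt{2}\,\|v\|_{L^2(-1,1)}^{1/2}\|v_x\|_{L^2(-1,1)}^{1/2}$, which absorbs half of the gradient, followed by H\"older and Young on $(-1,1)$ --- where the measure is finite and $2p>2$, so $\|v\|_{L^2(-1,1)}^2\leq\delta\|v\|_{L^{2p}(-1,1)}^{2p}+2C_\delta$ --- with the choice $\delta=-\lambda_2/(2pC_1)>0$, so that the defocusing $L^{2p}$ term present in $R$ swallows the local remainder. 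This yields $R(v)\geq\frac{Z}{2}|v(0)|^2-C\geq-C$ uniformly on $H^1(\R)$, hence $m(\omega)>-\infty$. In short: exploit the locality of the delta term rather than a global interpolation inequality, or simply invoke Lemma \ref{lemaux}, which was stated for precisely this purpose.
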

\begin{proof}
We first verify that $-\infty<m(\omega)$. Indeed, let us  write
\[
G_w(v)=R(v)-\frac{\omega}{2}\|v\|^2_{L^2}-\frac{\lambda_1}{p+1}\|v\|^{p+1}_{L^{p+1}}\quad v\in H^1(\mathbb R),
\]
where $R$ is the functional defined in \eqref{deffuncR}. Then, by Lemma \ref{lemaux} we get
\[
G_w(v)\geqq R(v)\geqq \frac{Z}{2}|v(0)|^2 -C\geqq -C,
\]
for all $v\in H^1(\mathbb{R})$ and some uniform $C>0$, yielding $-\infty<m(\omega)$, as claimed. In order to show that $m(\omega)<0$, let $v(x) := sh(x) \in H^1(\R)$ with $s>0$ and where $h(x)=e^{-\frac{Z|x|}{2}}$ is the eigenfunction of the operator $A_Z$ associated to the eigenvalue $\frac{-Z^2}{4}$. Therefore
\[
G_{\omega}(v) = - \frac{s^2}{2} \Big(\frac{Z^2}{4} + \omega \Big)\|h\|^2_{L^2}-\frac{1}{2}\int_{-\infty}^{\infty} g(s^2h^2(x))dx,
\]
where $g(s^2h^2(x))=f(r(x))s^2h^2(x)$, with  $0<r(x)<s^2h^2(x)\leq s^2$. Now, since $-f$ is an increasing function, we obtain that $-f(r(x))<-f(s^2)$ and $-g(s^2h^2(x))<-f(s^2)s^2h^2(x)$.
Thus
\[
G_{\omega}(v) \leq -\frac{s^2}{2}\|h\|^2_{L^2}\Big(\frac{Z^2}{4} + \omega + f(s^2)\Big).
\]
Since $Z^2/4+\omega > 0$ and $\lim_{s\to 0^+}f(s^2)=0$ we conclude that there exists $s_0>0$ such that $Z^2/4+\omega>-f(s^2)>0$ for $0<s\leq s_0$ and so
$G_{\omega}(s_0h)<0$. Lastly, suppose $M(\omega) \neq \varnothing$ then since for $h\in M(\omega)$ we have $h\neq 0$ and  $G_{\omega}'(h)=0$, then by Lemmata \ref{regul} and \ref{seis} we obtain $M(\omega)\subset{\cal A}_{\omega}$. This finishes the proof.
\end{proof}


At this point we recall the following refinement of Fatou's lemma due to Br\'ezis and Lieb \cite{BrLi83}.

\begin{lemma}[Br\'ezis-Lieb]
Let $2 \leq q < \infty$ and $\{ u_j \}$ be a bounded sequence in $L^q(\R)$ such that $u_j(x) \to u(x)$ a.e. in $x \in \R$ as $j \to \infty$. Then,
\[
\| u_j \|_{L^q}^q - \| u_j - u\|_{L^q}^q - \| u \|_{L^q}^q \rightarrow 0, \quad \text{as } \; j \to \infty.
\]
\end{lemma}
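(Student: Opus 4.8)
The plan is to show that the integrand converges to zero in $L^1(\R)$ via a truncation argument feeding into the dominated convergence theorem, rather than applying convergence theorems directly, since the sequence $\{u_j\}$ need not be dominated by a single fixed integrable function. First I would record that $u \in L^q(\R)$: because $u_j \to u$ almost everywhere and $\sup_j \|u_j\|_{L^q} < \infty$, Fatou's lemma gives $\|u\|_{L^q}^q \leq \liminf_{j} \|u_j\|_{L^q}^q < \infty$. Setting $v_j := u_j - u$, so that $v_j \to 0$ a.e. and $\sup_j \|v_j\|_{L^q} < \infty$, the goal reduces to proving that
\[
W_j(x) := \big| \, |v_j(x) + u(x)|^q - |v_j(x)|^q - |u(x)|^q \, \big| \longrightarrow 0 \quad \text{in } L^1(\R).
\]

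The key elementary estimate I would establish is the following: for every $\epsilon > 0$ there exists a constant $C_\epsilon > 0$ such that for all $a, b \in \C$,
\[
\big| \, |a + b|^q - |a|^q \, \big| \leq \epsilon |a|^q + C_\epsilon |b|^q.
\]
This follows from the mean value theorem applied to $s \mapsto s^q$ on $[0,\infty)$ together with Young's inequality, distinguishing the regimes $|b| \leq |a|$ and $|b| > |a|$. Applying it with $a = v_j$ and $b = u$ and then adding $|u|^q$ to both sides yields the pointwise domination
\[
W_j(x) \leq \epsilon |v_j(x)|^q + (C_\epsilon + 1)|u(x)|^q.
\]

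With this domination in hand I would run the standard truncation. Define the nonnegative function $G_j^\epsilon := \big( W_j - \epsilon |v_j|^q \big)^+$. By the previous inequality, $0 \leq G_j^\epsilon \leq (C_\epsilon + 1)|u|^q$, a fixed function in $L^1(\R)$; moreover $G_j^\epsilon \to 0$ a.e., since $v_j \to 0$ a.e. forces $W_j \to 0$ a.e. The dominated convergence theorem then gives $\int_\R G_j^\epsilon \, dx \to 0$ as $j \to \infty$. Since $W_j \leq G_j^\epsilon + \epsilon |v_j|^q$ pointwise, integrating produces
\[
\int_\R W_j \, dx \leq \int_\R G_j^\epsilon \, dx + \epsilon \sup_k \|v_k\|_{L^q}^q,
\]
whence $\limsup_{j\to\infty} \int_\R W_j \, dx \leq \epsilon \sup_k \|v_k\|_{L^q}^q$. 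As $\epsilon > 0$ is arbitrary and $\sup_k \|v_k\|_{L^q}^q < \infty$, we conclude $\int_\R W_j \, dx \to 0$, which is exactly the asserted limit.

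The main obstacle is precisely that $\{|v_j|^q\}$ is not controlled by any single integrable dominating function — the mass can escape to infinity — so one cannot apply dominated convergence to $W_j$ directly. The $\epsilon$-truncation through $G_j^\epsilon$ circumvents this by absorbing the uncontrolled term $\epsilon|v_j|^q$ into an error that is uniformly small in $j$ thanks to the $L^q$-boundedness, leaving a genuinely dominated remainder to which the dominated convergence theorem does apply.
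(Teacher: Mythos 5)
Your proof is correct, and it is precisely the classical Br\'ezis--Lieb argument: the elementary inequality $\big|\,|a+b|^q-|a|^q\,\big|\leq \epsilon|a|^q+C_\epsilon|b|^q$, the truncation $G_j^\epsilon=\big(W_j-\epsilon|v_j|^q\big)^+$ dominated by $(C_\epsilon+1)|u|^q$, dominated convergence, and letting $\epsilon\to 0$. The paper offers no proof of its own --- it simply cites Br\'ezis and Lieb \cite{BrLi83} --- so your argument coincides with the proof the paper implicitly relies on, with every step (the Fatou observation that $u\in L^q(\R)$, the a.e.\ convergence $W_j\to 0$, and the absorption of the non-dominated term $\epsilon|v_j|^q$ via the uniform $L^q$ bound) carried out correctly.
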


The following lemma establishes the improvement from weak to strong convergence due to convergence of the charge/energy functional.

\begin{lemma}\label{ocho} Let $h_n\in H^1(\mathbb{R})$ be such that $\lim_{n\to\infty}G_{\omega}(h_n)=m(\omega)$. Then there exists a subsequence $h_{n_j}$ and $h\in H^{1}(\mathbb{R})$ such that $\lim_{{n_j}\to\infty}h_{n_j}=h$ in $H^1(\mathbb{R})$ and $G_{\omega}(h)=m(\omega)$.

\end{lemma}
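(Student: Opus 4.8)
The plan is to establish compactness along the usual chain for this kind of problem: uniform boundedness of the minimizing sequence, extraction of a weak limit, a Br\'ezis--Lieb decomposition of the functional along the sequence, and a final step in which the \emph{nonnegativity} of the defocusing part of $G_\omega$ upgrades weak convergence to strong convergence.

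First I would show that $(h_n)$ is bounded in $H^1(\mathbb{R})$. Starting from the key estimate obtained inside the proof of Lemma \ref{lemaux}, namely $Z|v(0)|^2 \le \tfrac12\|v_x\|_{L^2}^2 - \tfrac{\lambda_2}{2p}\|v\|_{L^{2p}}^{2p} + C$, and inserting it into the definition \eqref{genfunctional} of $G_\omega$, one obtains a coercivity bound of the form $G_\omega(v) \ge \tfrac14\|v_x\|_{L^2}^2 - \tfrac{\omega}{2}\|v\|_{L^2}^2 - \tfrac{C}{2}$, where both terms on the right are nonnegative since $\omega<0$ (the $L^{p+1}$ and $L^{2p}$ contributions are discarded, being nonnegative). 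As $G_\omega(h_n)\to m(\omega)$, the quantities $\|(h_n)_x\|_{L^2}$ and $\|h_n\|_{L^2}$ are then bounded, so $(h_n)$ is bounded in $H^1(\mathbb{R})$. By reflexivity I extract a subsequence $h_{n_j}\rightharpoonup h$ weakly in $H^1(\mathbb{R})$; local compactness of the embedding $H^1\hookrightarrow C$ gives $h_{n_j}\to h$ uniformly on compact sets, whence $h_{n_j}(0)\to h(0)$ and $h_{n_j}\to h$ pointwise a.e.

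Next I would decompose the functional. Writing $w_j:=h_{n_j}-h$, so that $w_j\rightharpoonup 0$ in $H^1(\mathbb{R})$, the Hilbert-space identity $\|u_j\|^2=\|u_j-u\|^2+\|u\|^2+2\langle u_j-u,u\rangle$ applied to the $L^2$ inner product and to the gradient inner product yields $\|(h_{n_j})_x\|_{L^2}^2=\|(w_j)_x\|_{L^2}^2+\|h_x\|_{L^2}^2+o(1)$ and $\|h_{n_j}\|_{L^2}^2=\|w_j\|_{L^2}^2+\|h\|_{L^2}^2+o(1)$, the cross terms vanishing by weak convergence. The Br\'ezis--Lieb lemma, applied with $q=p+1$ and $q=2p$ (both $\ge 2$ since $p>1$, using the $L^q$-boundedness furnished by Remark \ref{remGagliardoNir} and the a.e.\ convergence), gives the analogous splittings of $\|h_{n_j}\|_{L^{p+1}}^{p+1}$ and $\|h_{n_j}\|_{L^{2p}}^{2p}$, while $|h_{n_j}(0)|^2\to|h(0)|^2$ disposes of the delta term. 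Collecting everything I arrive at
\[
G_\omega(h_{n_j}) = G_\omega(h) + \widetilde G_\omega(w_j) + o(1),
\]
where $\widetilde G_\omega(w):=\tfrac12\|w_x\|_{L^2}^2-\tfrac{\omega}{2}\|w\|_{L^2}^2-\tfrac{\lambda_1}{p+1}\|w\|_{L^{p+1}}^{p+1}-\tfrac{\lambda_2}{2p}\|w\|_{L^{2p}}^{2p}$ is $G_\omega$ stripped of its point-interaction term.

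The decisive structural feature---and the reason the usual concentration-compactness dichotomy need not be invoked---is that $\widetilde G_\omega(w)\ge 0$ for every $w\in H^1(\mathbb{R})$: under $\omega<0$, $\lambda_1\le 0$, $\lambda_2<0$ each of its four terms is nonnegative. Passing to the limit in the decomposition gives $\widetilde G_\omega(w_j)\to m(\omega)-G_\omega(h)$; since the left side is nonnegative while $G_\omega(h)\ge m(\omega)$ by definition of the infimum, this forces $G_\omega(h)=m(\omega)$ and $\widetilde G_\omega(w_j)\to 0$. Because each summand of $\widetilde G_\omega(w_j)$ is nonnegative, the two with strictly positive coefficients must individually vanish, so $\|(w_j)_x\|_{L^2}\to 0$ and $\|w_j\|_{L^2}\to 0$, i.e.\ $h_{n_j}\to h$ strongly in $H^1(\mathbb{R})$; moreover $h\ne 0$, since $G_\omega(h)=m(\omega)<0=G_\omega(0)$ by Lemma \ref{siete}. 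The genuinely delicate points are the local compactness of the trace $v\mapsto v(0)$ and the recognition that all binding is supplied by the attractive defect $Z>0$, the unique negative contribution to $G_\omega$; I expect the careful bookkeeping in the Br\'ezis--Lieb decomposition to be the main technical step.
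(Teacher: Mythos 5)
Your proposal is correct and follows essentially the same route as the paper: coercivity of $G_\omega$ via the trace estimate of Lemma \ref{lemaux} (equivalently, $I_\omega(v)\le 2G_\omega(v)+C$), weak $H^1$ convergence with $h_{n_j}(0)\to h(0)$ by the compact embedding $H^1(-1,1)\hookrightarrow C[-1,1]$, the Br\'ezis--Lieb splitting of the $L^2$, $\dot H^1$, $L^{p+1}$ and $L^{2p}$ terms, and the sign structure $\omega<0$, $\lambda_1\le 0$, $\lambda_2<0$ forcing the remainder functional of $w_j=h_{n_j}-h$ to vanish, hence strong $H^1$ convergence. The only (harmless) deviation is that you deduce $G_\omega(h)=m(\omega)$ from the nonnegativity of $\widetilde G_\omega(w_j)$ in the decomposition, whereas the paper gets it first from weak lower semicontinuity and then uses the decomposition only for the strong convergence.
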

\begin{proof} First, notice that for all $v\in H^1(\mathbb{R})$
\begin{equation}\label{imyr}
I_\omega(v) := \frac{1}{2}\|v_x\|^2_{L^2}-\frac{\omega}{2}\|v\|^2_{L^2}=G_{\omega}(v) + \frac{Z}{2}|v(0)|^{2} + \frac{\lambda_1}{p+1}\|v\|^{p+1}_{L^{p+1}}+\frac{\lambda_2}{2p}\|v\|^{2p}_{L^{2p}}.
\end{equation}
Since $\omega < 0$, it follows that $I_\omega(v)$ is equivalent to $\|v\|^2_{H^1}$. From \eqref{starR} and the fact that $\lambda_1, \lambda_2<0$, we obtain
\[
\frac{1}{2}\|v_x\|^2_{L^2}-\frac{\omega}{2}\|v\|^2_{L^2}\leq G_{\omega}(v)+R(v)+C \leq 2G_{\omega}(v)+C,
\]
for some uniform $C > 0$. Hence, it is clear that if the sequence $G_{\omega}(h_n)$ converges then the sequence $h_n$ is bounded in $H^1(\mathbb{R})$. Thus, there exists a subsequence $h_{n_j}$ and  $h\in H^1(\mathbb{R})$ such that $\{h_{n_j}\}$ converges wealky to $h$ in $H^1(\mathbb{R})$. In addition, since $H^1(-1,1)$ is compactly embedded in $C[-1,1]$, we deduce that $h_{n_j}(0)\rightarrow h(0)$. Thus,
\[
m(\omega)\leq G_{\omega}(h)\leq \liminf\limits_{n_j\to\infty}G_{\omega}(h_{n_j})=m(\omega),
\] 
which implies that $h\in M(\omega)$. Now, since $h_{n_j} \rightharpoonup h$ weakly in $H^1(\R)$ we have that $h_{n_j}(x) \to h(x)$ a.e. in $x \in \R$ and also that
\begin{equation}
\label{goodh1}
\begin{aligned}
\|h_{n_j} - h \|_{L^2}^2 + \| h \|_{L^2}^2 = \|h_{n_j} \|_{L^2}^2 + o(1),\\
\|\partial_x h_{n_j} - h_x \|_{L^2}^2 + \| h_x \|_{L^2}^2 = \| \partial_x h_{n_j} \|_{L^2}^2 + o(1),
\end{aligned}
\end{equation}
as $n_j \to \infty$. Since $\| h_{n_j} \|_{H^1}$ is uniformly bounded, the Gagliardo-Nirenberg interpolation inequalities \eqref{gagliNir} imply that $\| h_{n_j} \|_{L^{p+1}}$ and $\| h_{n_j} \|_{L^{2p}}$ are uniformly bounded as well. This fact, together with $h_{n_j}(x) \to h(x)$ a.e. in $x \in \R$, allows us to apply Br\'ezis-Lieb lemma and to obtain
\begin{equation}
\label{goodhp}
\begin{aligned}
\|h_{n_j} - h \|_{L^{p+1}}^{p+1} + \| h \|_{L^{p+1}}^{p+1} = \|h_{n_j} \|_{L^{p+1}}^{p+1} + o(1),\\
\|h_{n_j} - h \|_{L^{2p}}^{2p} + \| h \|_{L^{2p}}^{2p} = \|h_{n_j} \|_{L^{2p}}^{2p} + o(1),
\end{aligned}
\end{equation}
as $n_j \to \infty$. Combine \eqref{goodh1} and \eqref{goodhp} to arrive at
\[
G_\omega(h_{n_j} - h) + G_\omega(h) = G_\omega(h_{n_j}) + o(1), \qquad \text{as } \: n_j \to \infty.
\]
From \eqref{imyr} we then have
\[
\begin{aligned}
0 \leq I_\omega(h_{n_j} - h) &\leq I_\omega(h_{n_j} - h) - \frac{\lambda_1}{p+1}\|h_{n_j} - h\|^{p+1}_{L^{p+1}}-\frac{\lambda_2}{2p}\|h_{n_j} - h\|^{2p}_{L^{2p}} \\&= G_\omega(h_{n_j} - h) + \frac{Z}{2} |h_{n_j}(0) - h(0) |^2 \\&= G_\omega(h_{n_j}) - G_\omega(h) + o(1),
\end{aligned}
\]
inasmuch as $h_{n_j}(0) \to h(0)$. This yields $h_{n_j}\rightarrow h$ in $H^1(\mathbb{R})$. This finishes the proof.
\end{proof}

\begin{remark}
\label{remconvLp}
It is to be noticed that from \eqref{imyr} we also deduce $h_{n_j} \to h$ in $L^{p+1}(\R)$ and in $L^{2p}(\R)$ in view that $I_\omega(v) \geq 0$ for all $v \in H^1(\R)$.
\end{remark}

\begin{lemma}\label{nueve}
$M(\omega)={\cal A}_{\omega}=\{e^{i\theta}\phi_{\omega}:\theta\in \mathbb{R}\}$, where $\phi_{\omega}$ denotes the standing wave profile given in \eqref{numeratorwithZ1}.
\end{lemma}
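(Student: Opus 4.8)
The plan is to assemble the three preceding lemmas so that the only new observation required is the phase-invariance of the functional $G_{\omega}$. First I would point out that the substantive analytic work has already been carried out: Lemma \ref{ocho} guarantees that any minimizing sequence for $m(\omega)$ converges (along a subsequence) in $H^1(\mathbb{R})$ to an actual minimizer, so that $M(\omega) \neq \varnothing$; Lemma \ref{siete} provides the inclusion $M(\omega) \subset {\cal A}_{\omega}$; and Lemma \ref{seis} identifies the critical set explicitly as ${\cal A}_{\omega} = \{e^{i\theta}\phi_{\omega} : \theta \in \mathbb{R}\}$. Consequently the statement reduces to the single reverse inclusion $\{e^{i\theta}\phi_{\omega} : \theta \in \mathbb{R}\} \subset M(\omega)$.

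The key structural remark is that $G_{\omega}$ is invariant under the phase rotation $v \mapsto e^{i\theta}v$. Each term in \eqref{genfunctional} depends on $v$ only through $|v_x|$, $|v(0)|$, or $|v|$: indeed $\|(e^{i\theta}v)_x\|^2_{L^2} = \|v_x\|^2_{L^2}$, $|e^{i\theta}v(0)|^2 = |v(0)|^2$, $\|e^{i\theta}v\|^2_{L^2} = \|v\|^2_{L^2}$, and $g(|e^{i\theta}v(x)|^2) = g(|v(x)|^2)$ pointwise. Hence $G_{\omega}(e^{i\theta}v) = G_{\omega}(v)$ for every $\theta \in \mathbb{R}$ and every $v \in H^1(\mathbb{R})$, and in particular $m(\omega)$ and the set $M(\omega)$ are invariant under this rotation.

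I would then conclude as follows. Since $M(\omega) \neq \varnothing$ and $M(\omega) \subset {\cal A}_{\omega} = \{e^{i\theta}\phi_{\omega} : \theta \in \mathbb{R}\}$, there exists some $\theta_0 \in \mathbb{R}$ with $e^{i\theta_0}\phi_{\omega} \in M(\omega)$, that is, $G_{\omega}(e^{i\theta_0}\phi_{\omega}) = m(\omega)$. By the phase-invariance just established, $G_{\omega}(\phi_{\omega}) = G_{\omega}(e^{i\theta_0}\phi_{\omega}) = m(\omega)$, so $\phi_{\omega} \in M(\omega)$; applying invariance once more gives $G_{\omega}(e^{i\theta}\phi_{\omega}) = G_{\omega}(\phi_{\omega}) = m(\omega)$ for all $\theta \in \mathbb{R}$, whence $\{e^{i\theta}\phi_{\omega} : \theta \in \mathbb{R}\} \subset M(\omega)$. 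Combining this with the inclusion $M(\omega) \subset {\cal A}_{\omega}$ from Lemma \ref{siete} and the identification of ${\cal A}_{\omega}$ from Lemma \ref{seis} yields the chain of equalities $M(\omega) = {\cal A}_{\omega} = \{e^{i\theta}\phi_{\omega} : \theta \in \mathbb{R}\}$.

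There is no genuine obstacle at this stage, precisely because the difficult steps---compactness of minimizing sequences, the characterization of critical points via the shooting/uniqueness argument, and the strict negativity of $m(\omega)$---have been dispatched in Lemmata \ref{siete}, \ref{ocho}, and \ref{seis}. The only points demanding care are the nonemptiness of $M(\omega)$ (so that a minimizer exists to be rotated) and the elementary verification of phase-invariance; both are immediate from the results already in hand.
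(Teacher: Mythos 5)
Your proposal is correct and follows essentially the same route as the paper: nonemptiness of $M(\omega)$ from Lemmata \ref{siete} and \ref{ocho}, the inclusion $M(\omega)\subset{\cal A}_{\omega}$ from Lemma \ref{siete}, the identification ${\cal A}_{\omega}=\{e^{i\theta}\phi_{\omega}:\theta\in\mathbb{R}\}$ from Lemma \ref{seis}, and the reverse inclusion via phase-invariance of $G_{\omega}$. The only difference is cosmetic---you verify the rotation invariance explicitly, term by term in \eqref{genfunctional}, where the paper leaves it implicit in asserting $G_{\omega}(\phi_{\omega})=m(\omega)$.
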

\begin{proof} From Lemmata \ref{siete} and \ref{ocho} we know that $M(\omega)\neq\varnothing$. Then there exists $h\in H^1(\mathbb{R})$ such that $G_{\omega}(h)=m(\omega)$. From Lemma \ref{siete}, $h$ is a non-trivial critical point of $G_{\omega}$, that is, $h\in {\cal A}_{\omega}$. Apply Lemma \ref{seis} to obtain that $h=e^{i\theta_0}\phi_{\omega}$ for some $\theta_0\in\mathbb{R}$. Thus, since $\phi_{\omega}\in H^1(\mathbb{R})$ and $G_{\omega}(\phi_{\omega})=m(\omega)$, then $\phi_{\omega}\in M(\omega)$. This implies that ${\cal A}_{\omega}\subset M(\omega)$. The other inclusion was proved in Lemma \ref{seis}. This finishes the proof of the lemma.    
\end{proof}

After these preparations we are now ready to prove Theorem \ref{main}.

\begin{proof}[Proof of Theorem \ref{main}]
We argue by contradiction. Suppose that the standing wave  $e^{-i\omega t}\phi_\omega$ is orbitally unstable. Then there exists $\epsilon_0>0$, a sequence $\{h_n(t)\}$ of solutions of \eqref{deltasch1} (by Theorem \ref{cazi}) and a sequence $t_n>0$, such that
\begin{subequations}
\begin{align}
&\lim_{n\rightarrow\infty}\|h_n(0)-\phi_{\omega}\|_{H^1}=0,\label{con}\\
&\inf_{\theta\in\mathbb{R}}\|h_n(t_n)-e^{i\theta}\phi_{\omega}\|_{H^1}\geq \epsilon_0.\label{div}
\end{align}
\end{subequations}
Since $G_{\omega}$ is conserved by the flow of the Schr\"odinger equation \eqref{deltasch1}, we get that $G_{\omega}(h_n(t_n))=G_{\omega}(h_n(0))$ for all $n\in\mathbb{N}$. Then \eqref{con} and continuity of $G_\omega$ yield
\[
\lim_{n\to\infty}G_{\omega}(h_n(t_n))=G_{\omega}(\phi_{\omega})=m(\omega).
\]  
Henceforth, Lemmata \ref{ocho} and \ref{nueve} imply the existence of a subsequence $h_{n_j}$ such that $h_{n_j}(t_{n_j})\to h$ with $G_\omega(h)=m(\omega)$. Then $h\in {\cal A}_{\omega}$ and $h=e^{i\theta_0}\phi_{\omega}$ for some $\theta_0 \in \R$. Therefore,
\[
\lim_{{n_j}\to\infty}h_{n_j}(t_{n_j})=e^{i\theta_0}\phi_{\omega},
\] 
in $H^1(\mathbb{R})$, which contradicts \eqref{div}. Hence, we conclude that $e^{-i\omega t}\phi_\omega$ is orbitally stable.
\end{proof}
 
\subsection{Orbital stability of equilibrium solutions}

This section is devoted to prove Theorem \ref{main2}. Let $1 < p < 5$, $Z > 0$, $\lambda_1 < 0$, $\lambda_2 < 0$, $\omega = 0$ and $f$ is of the form \eqref{genf12}. We consider the space
\[
X := \{ v \in L^{p+1}(\R) \cap L^{2p}(\R) \, : \, v_x \in L^2(\R) \}.
\]
Thus, $X$ is a reflexive Banach space with norm
\[
\|v\|_X^2 = \|v\|_{L^{p+1}}^{p+1} + \|v \|_{L^{2p}}^{2p} + \|v_x \|_{L^2}^2.
\]
Notice as well that $H^1(\R) \subset X \subset L^{p+1}(\R) \cap L^{2p}(\R)$. For $\omega=0$, $G_\omega$ in \eqref{genfunctional} coincides with the conserved quantity $E$.

First, let us observe that if we define the following $C^1$ functional on $X$, $\widetilde{R}:X\to \mathbb R$, as
\begin{equation}
\label{deffunctR}
\widetilde{R}(v) := \frac{1}{2}\|v_x \|^2_{L^2}-\frac{Z}{2}|v(0)|^{2} - \frac{\lambda_1}{p+1}\|v\|^{p+1}_{L^{p+1}} = E(v) + \frac{\lambda_2}{2p}\|v\|^{2p}_{L^{2p}}, \qquad v\in X,
\end{equation}
the $\widetilde{R}(v) \leq E(v)$ and  by the same arguments as in the proof of Lemma \ref{lemaux}, it is easy to show that there exists a uniform constant $C > 0$ such that
\begin{equation}
\label{clave2}
\frac{Z}{2} |v(0)|^2 \leq \widetilde{R}(v) + C,
\end{equation}
for all $v \in X$, where we are applying the inequality
\[
\|v\|_{L^2(-1,1)}^2  \leq \delta \|v \|_{L^{p+1}(-1,1)}^{p+1} + 2C_\delta,
\]
with $\delta = - \lambda_1 / (p+1)C_1 > 0$ in view that $\lambda_1 < 0$. The rest of the proof goes verbatim.

Henceforth, we now consider the variational problem for $E: X \to \R$
\begin{align}
m_0 &= \inf \{ E(v) \, : \, v \in X \}, \nonumber\\
M_0 &= \{ v \in X \, : \, E(v) = m_0 \}, \label{varprob0}\\
\widetilde{\mathcal{A}}_0 &= \{ v \in X \, : \, E'(v) = 0, \, v \neq 0\}. \nonumber
\end{align}
Note that, clearly, $\mathcal{A}_0 \subset \widetilde{\mathcal{A}}_0$ because $G_0(v) = E(v)$ when $\omega = 0$ and $H^1(\R) \subset X$.

\begin{lemma}
\label{lemseven}
$- \infty < m_0 < 0$ and $M_0 \subset \widetilde{\mathcal{A}}_0 = \mathcal{A}_0$.
\end{lemma}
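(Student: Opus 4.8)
The plan is to mirror the proof of Lemma~\ref{siete}, transplanting each step to the energy space $X$ and to the degenerate frequency $\omega = 0$, and then to settle the one genuinely new point, the identification $\widetilde{\mathcal{A}}_0 = \mathcal{A}_0$.

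First I would pin down the two-sided bound on $m_0$. For the lower bound, since $\lambda_2 < 0$ the splitting $E(v) = \widetilde{R}(v) - \frac{\lambda_2}{2p}\|v\|_{L^{2p}}^{2p}$ from \eqref{deffunctR} gives $E(v) \geq \widetilde{R}(v)$, and \eqref{clave2} then yields $E(v) \geq \frac{Z}{2}|v(0)|^2 - C \geq -C$ for every $v \in X$; hence $m_0 > -\infty$. For the upper bound I would test with $v = sh$, where $h(x) = e^{-Z|x|/2}$ is the eigenfunction of $A_Z$ associated with the eigenvalue $-Z^2/4$. Exactly as in Lemma~\ref{siete}, now with $\omega = 0$, writing $g(s^2h^2) = f(r)s^2h^2$ with $0 < r < s^2h^2 \leq s^2$ and using that $-f$ is increasing, one obtains $E(sh) \leq -\frac{s^2}{2}\|h\|_{L^2}^2\big(\frac{Z^2}{4} + f(s^2)\big)$; since $\frac{Z^2}{4} > 0$ and $f(s^2) \to 0$ as $s \to 0^+$, there is $s_0 > 0$ with $E(s_0 h) < 0$, so that $m_0 < 0$.

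The inclusion $M_0 \subset \widetilde{\mathcal{A}}_0$ is then immediate: a minimizer $u \in M_0$ is nontrivial because $E(u) = m_0 < 0 = E(0)$, and, the minimization over $X$ being unconstrained with $E$ of class $C^1$ on $X$, it satisfies $E'(u) = 0$; thus $u \in \widetilde{\mathcal{A}}_0$.

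The crux is the equality $\widetilde{\mathcal{A}}_0 = \mathcal{A}_0$. Since $G_0 = E$ and $H^1(\R) \subset X$, the inclusion $\mathcal{A}_0 \subset \widetilde{\mathcal{A}}_0$ is already recorded, so only $\widetilde{\mathcal{A}}_0 \subset \mathcal{A}_0$ remains. Given $v \in \widetilde{\mathcal{A}}_0$, the relation $E'(v) = 0$ is precisely the distributional form of \eqref{genod} with $\omega = 0$, and I would first promote $v$ to the conclusions \eqref{reg}--\eqref{eqdifxre} of Lemma~\ref{regul}. The local bootstrap is unaffected, since $v_x \in L^2$ and $v \in L^{p+1}_{\mathrm{loc}} \subset L^2_{\mathrm{loc}}$; the decay \eqref{complimitado} follows from $v \in L^{p+1}(\R) \cap L^{2p}(\R)$ together with $v_x \in L^2(\R)$, because these force $v$ to be uniformly continuous and integrable, hence $v(x) \to 0$, whereupon $v'' = -f(|v|^2)v \to 0$ and $v_x \in L^2$ give $v'(x) \to 0$ as well. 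With \eqref{reg}--\eqref{eqdifxre} secured, the uniqueness argument of Lemma~\ref{lemB}, the $\omega = 0$ counterpart of Lemma~\ref{seis}, applies verbatim, for it invokes only the profile equation, the jump condition and the decay at infinity rather than global membership in $L^2(\R)$; it delivers $v = e^{i\theta}\phi_0$ for some $\theta \in \R$, with $\phi_0$ the profile \eqref{nutorwithZ1}. Finally, the hypothesis $1 < p < 5$ ensures $\phi_0 \in H^1(\R)$ by Remark~\ref{rempl5}, so $v \in H^1(\R)$ and $v$ is a nontrivial critical point of $G_0 = E$ on $H^1(\R)$, that is, $v \in \mathcal{A}_0$. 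The delicate feature, and the reason the restriction $1 < p < 5$ is indispensable, is exactly that $X$ does not embed into $L^2(\R)$: the passage from a critical point in $X$ to one in $H^1(\R)$ rests on the decay estimate and on the range of $p$ that keeps the extremal profile square integrable.
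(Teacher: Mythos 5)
Your proposal is correct and takes essentially the same route as the paper: the lower bound via $E(v)\geq \widetilde{R}(v)\geq \frac{Z}{2}|v(0)|^2-C$ from \eqref{clave2}, the upper bound by testing with $s\,e^{-Z|x|/2}$ exactly as in Lemma~\ref{siete} with $\omega=0$, and the inclusion $\widetilde{\mathcal{A}}_0\subset\mathcal{A}_0$ by promoting a critical point in $X$ to the conclusions \eqref{reg}--\eqref{eqdifxre} of Lemma~\ref{regul} and then invoking the uniqueness argument behind Lemma~\ref{lemB}, with $1<p<5$ guaranteeing $\phi_0\in H^1(\R)$ as in Remark~\ref{rempl5}. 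The only difference is that you spell out steps the paper leaves implicit (the bootstrap and the decay of $v\in X$ from uniform continuity plus $v\in L^{p+1}(\R)\cap L^{2p}(\R)$, and the trivial inclusion $M_0\subset\widetilde{\mathcal{A}}_0$), which is an elaboration rather than a divergence.
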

\begin{proof}
From the definition of the functional $\widetilde{R}$ and from \eqref{clave2} we clearly have
\[
E(v) \geq  \widetilde{R}(v) \geq \frac{Z}{2} |v(0)|^2 - C \geq - C,
\]
for some uniform $C > 0$, yielding $- \infty < m_0$. The proof that $m_0 < 0$ follows exactly the same ideas as for   $m(\omega) < 0$ in Lemma \ref{siete}. Lastly, if $g \in \widetilde{\mathcal{A}}_0$ then $g$ is a critical point of the variational problem \eqref{varprob0} and it satisfies \eqref{reg}-\eqref{eqdifxre} with $\omega = 0$. Since $1 < p < 5$, by the arguments of the proof Lemma \ref{lemB} we obtain that $g \in \mathcal{A}_0$. This shows that $\widetilde{\mathcal{A}}_0 \subset \mathcal{A}_0$. This concludes the proof.
\end{proof}

\begin{lemma}
\label{lemeight}
Let $\{h_n\} \subset X$ be a minimizing sequence such that $\lim_{n \to \infty} E(h_n) = m_0$. Then there exists a subsequence $h_{n_j}$ and $h \in X$ such that $\lim_{n_j \to \infty} h_{n_j} = h$ in $X$ and $E(h) = m_0$.
\end{lemma}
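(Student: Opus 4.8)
The plan is to reproduce the three-step scheme of Lemma~\ref{ocho}, the only structural change being that the $H^1$-coercive quantity $I_\omega$ degenerates at $\omega=0$ and must be replaced by the functional
\[
J(v):=E(v)+\tfrac{Z}{2}|v(0)|^2=\tfrac12\|v_x\|_{L^2}^2-\tfrac{\lambda_1}{p+1}\|v\|_{L^{p+1}}^{p+1}-\tfrac{\lambda_2}{2p}\|v\|_{L^{2p}}^{2p},\qquad v\in X.
\]
Because $\lambda_1,\lambda_2<0$, each of the three terms of $J$ is nonnegative, and in fact $J(v)\ge c\|v\|_X^2$ with $c:=\min\{\tfrac12,-\tfrac{\lambda_1}{p+1},-\tfrac{\lambda_2}{2p}\}>0$; thus $J$ is comparable to the square of the $X$-norm and plays exactly the role that $I_\omega$ played in the $H^1$ setting.

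First I would prove that the minimizing sequence is bounded in $X$. Since $\widetilde R(v)\le E(v)$, estimate \eqref{clave2} gives $\tfrac{Z}{2}|v(0)|^2\le E(v)+C$, whence $J(v)=E(v)+\tfrac{Z}{2}|v(0)|^2\le 2E(v)+C$. As $E(h_n)\to m_0$, the bound $c\|h_n\|_X^2\le J(h_n)\le 2E(h_n)+C$ shows that $\{h_n\}$ is bounded in $X$. Since $X$ is reflexive I extract a subsequence $h_{n_j}\rightharpoonup h$ weakly in $X$; continuity of the embeddings then yields $\partial_x h_{n_j}\rightharpoonup h_x$ in $L^2$ and $h_{n_j}\rightharpoonup h$ in $L^{p+1}$ and $L^{2p}$. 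To recover pointwise information I note that on any interval $(-M,M)$ the bounds on $\|\partial_x h_{n_j}\|_{L^2}$ and $\|h_{n_j}\|_{L^{p+1}}$ (hence, by H\"older on a bounded interval, on $\|h_{n_j}\|_{L^2(-M,M)}$) make $\{h_{n_j}\}$ bounded in $H^1(-M,M)$; the compact embedding $H^1(-M,M)\hookrightarrow C[-M,M]$ together with a diagonal argument produces, along a further subsequence, $h_{n_j}(x)\to h(x)$ a.e. and $h_{n_j}(0)\to h(0)$.

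Next, weak lower semicontinuity of the convex functional $J$ combined with $|h_{n_j}(0)|^2\to|h(0)|^2$ gives $E(h)=J(h)-\tfrac{Z}{2}|h(0)|^2\le\liminf E(h_{n_j})=m_0$; since $h\in X$ forces $E(h)\ge m_0$, we conclude $E(h)=m_0$, i.e. $h\in M_0$. To upgrade weak convergence to strong, I would apply the Br\'ezis--Lieb lemma to the $L^{p+1}$ and $L^{2p}$ terms (legitimate because $2<p+1,2p<\infty$, the sequences are bounded in $L^{p+1}$ and $L^{2p}$ directly from the $X$-bound, and $h_{n_j}\to h$ a.e.) and combine it with the Hilbert-space expansions of $\|\partial_x h_{n_j}\|_{L^2}^2$ and $|h_{n_j}(0)|^2$ to obtain the splitting $E(h_{n_j})=E(h_{n_j}-h)+E(h)+o(1)$. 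Hence $E(h_{n_j}-h)=E(h_{n_j})-E(h)+o(1)\to0$, and since
\[
0\le c\|h_{n_j}-h\|_X^2\le J(h_{n_j}-h)=E(h_{n_j}-h)+\tfrac{Z}{2}|h_{n_j}(0)-h(0)|^2\longrightarrow 0,
\]
we obtain $h_{n_j}\to h$ strongly in $X$, completing the argument.

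The step I expect to be the genuine obstacle is precisely the coercivity and compactness at $\omega=0$: the vanishing of the mass term destroys all $L^2$ control, so neither the boundedness of the minimizing sequence nor the lower semicontinuity can be phrased in $H^1$, and the entire argument must live in the reflexive space $X$ with $J$ (built solely from the gradient and the two defocusing nonlinear terms) as the coercive functional. Once this replacement is in place and the local $H^1(-M,M)$ bounds are used to pass to a.e. and point-value convergence, the remaining steps are a verbatim transcription of the proof of Lemma~\ref{ocho}.
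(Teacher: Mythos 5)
Your proposal is correct and follows essentially the same route as the paper's proof: coercivity of $J(v)=E(v)+\frac{Z}{2}|v(0)|^2$ via \eqref{clave2}, weak compactness in the reflexive space $X$, local compactness to recover $h_{n_j}(0)\to h(0)$ and a.e.\ convergence, weak lower semicontinuity to get $E(h)=m_0$, and a Br\'ezis--Lieb splitting combined with the nonnegativity of $J$ to upgrade to strong convergence in $X$. The only (harmless) deviation is that you bound $\|h_{n_j}\|_{L^2(-M,M)}$ by H\"older's inequality from the $L^{p+1}$ bound (plus an explicit diagonal argument for a.e.\ convergence, a detail the paper glosses over), where the paper instead invokes an interpolation inequality on $(-1,1)$; both yield the local $H^1$ bound needed for the compact embedding into $C[-1,1]$.
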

\begin{proof}
First let us observe that for any $v \in X$
\[
0 \leq \frac{1}{2} \| v_x\|_{L^2}^2 - \frac{\lambda_1}{p+1} \|v \|_{L^{p+1}}^{p+1} - \frac{\lambda_2}{2p} \|v \|_{L^{2p}}^{2p} = E(v) + \frac{Z}{2}|v(0)|^2 \leq E(v) + \widetilde{R}(v) + C \leq 2E(v) + C,
\]
with uniform $C > 0$ in view of estimate \eqref{clave2}. Hence it is clear that if $E(h_n)$ converges then the sequence $h_n$ is bounded in $X$. Since the space $X$ is reflexive there exist a subsequence $h_{n_j}$ and $h \in X$ such that $h_{n_j} \rightharpoonup h$ weakly in $X$. Hence we have that , $h_{n_j} \rightharpoonup h$ weakly in $L^{p+1}(\R)$, $h_{n_j} \rightharpoonup h$ weakly in $L^{2p}(\R)$ and $\partial_x h_{n_j} \rightharpoonup \partial_x h$ weakly in $L^2(\R)$. By classical interpolation inequalities in bounded intervals (cf. Br\'ezis \cite{Brez11}, chapter 6) one can show that
\[
|h_{n_j}(x)|^2 \leq C \| h_{n_j} \|_{L^{p+1}(-1,1)}^{1-\theta}  \| \partial_x h_{n_j} \|_{L^2(-1,1)}^{\theta},
\]
for $\theta = 2/(p+3) \in (0,1)$ and a.e. in $x \in (-1,1)$, yielding $h_{n_j} \in L^2(-1,1)$ (upon integration in a bounded interval). Therefore, $h_{n_j} \in H^1(-1,1)$ and since $H^1(-1,1)$ is compactly embedded in $C[-1,1]$, we deduce that $h_{n_j}(0)\rightarrow h(0)$. Consequently, we conclude that
\[
m_0 \leq E(h) \leq \liminf_{n_j \to \infty} E(h_{n_j})= m_0,
\]
which implies that $h \in M_0$. By the same arguments of the proof of Lemma \ref{ocho} (see also Remark \ref{remconvLp}) we conclude that 
\[
\|h_{n_j}\|_{L^{p+1}} \to \|h\|_{L^{p+1}}, \quad \|h_{n_j}\|_{L^{2p}} \to \|h\|_{L^{2p}}, \quad \|\partial_x h_{n_j}\|_{L^2} \to \|\partial_x h\|_{L^2}
\]
(strongly), as $n_j \to \infty$, that is, $h_{n_j} \to h$ in $X$. This concludes the proof.
\end{proof}

\begin{lemma}
\label{lemnine}
Let $h_n \in H^1(\R)$ be a sequence such that $\lim_{n \to \infty} E(h_n) = E(\phi_0)$ and $\lim_{n \to \infty}\|h_n\|_{L^2} = \| \phi_0\|_{L^2}$. Then there exists a subsequence $h_{n_j}$ and $\theta_0 \in \R$ such that $h_{n_j} \to e^{i \theta_0} \phi_0$ in $H^1(\R)$ as $n_j \to \infty$.
\end{lemma}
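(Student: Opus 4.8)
The plan is to recognize $\{h_n\}$ as a minimizing sequence for the variational problem \eqref{varprob0} and then to promote the strong convergence in $X$ furnished by Lemma \ref{lemeight} to strong convergence in $H^1(\R)$ by exploiting the extra hypothesis on the $L^2$ norms. First I would record that $m_0 = E(\phi_0)$ and $M_0 = \{e^{i\theta}\phi_0 : \theta \in \R\}$. Indeed, applying Lemma \ref{lemeight} to any minimizing sequence produces a minimizer, so $M_0 \neq \varnothing$; by Lemma \ref{lemseven} we have $M_0 \subset \mathcal{A}_0 = \{e^{i\theta}\phi_0\}$ (the last equality from Lemma \ref{lemB}), and since $E$ is phase-invariant every $e^{i\theta}\phi_0$ is then a minimizer. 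In particular $E(\phi_0) = m_0$, so the hypothesis $E(h_n) \to E(\phi_0)$ together with $H^1(\R) \subset X$ says precisely that $\{h_n\} \subset X$ is a minimizing sequence for $E$ on $X$.

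By Lemma \ref{lemeight} there is then a subsequence $h_{n_j}$ and $h \in M_0$ with $h_{n_j} \to h$ strongly in $X$; write $h = e^{i\theta_0}\phi_0$. Strong convergence in $X$ already yields
\[
\partial_x h_{n_j} \to \partial_x h \ \text{ in } L^2(\R), \qquad h_{n_j} \to h \ \text{ in } L^{p+1}(\R) \text{ and } L^{2p}(\R).
\]
The only missing ingredient for $H^1$ convergence is strong convergence $h_{n_j} \to h$ in $L^2(\R)$, and this is exactly the point where the $X$-norm gives no control: since $\omega = 0$, the functional $E$ (and hence the norm of $X$) carries no $L^2$ term, so Lemma \ref{lemeight} alone cannot pin down the $L^2$ component. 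This is where the second hypothesis $\|h_n\|_{L^2} \to \|\phi_0\|_{L^2}$ enters.

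To recover the $L^2$ limit I would argue as follows. The hypothesis bounds $\{h_{n_j}\}$ in $L^2(\R)$, so a subsequence converges weakly in $L^2$; its weak limit is identified with $h$ by testing against $\varphi \in C_c^\infty(\R)$: on the compact support $[-M,M]$ one has, by H\"older's inequality and $p+1 > 2$,
\[
|\langle h_{n_j} - h, \varphi\rangle| \leq \|h_{n_j} - h\|_{L^2(-M,M)} \|\varphi\|_{L^2} \leq (2M)^{\frac{p-1}{2(p+1)}} \|h_{n_j} - h\|_{L^{p+1}(-M,M)} \|\varphi\|_{L^2} \to 0,
\]
and density of $C_c^\infty(\R)$ in $L^2(\R)$ together with the uniform $L^2$ bound gives $h_{n_j} \rightharpoonup h$ weakly in $L^2(\R)$. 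Since moreover $\|h_{n_j}\|_{L^2} \to \|\phi_0\|_{L^2} = \|h\|_{L^2}$, the Hilbert-space identity
\[
\|h_{n_j} - h\|_{L^2}^2 = \|h_{n_j}\|_{L^2}^2 - 2\Re\langle h_{n_j}, h\rangle + \|h\|_{L^2}^2 \longrightarrow 0
\]
upgrades weak to strong convergence in $L^2(\R)$. Combining this with $\partial_x h_{n_j} \to \partial_x h$ in $L^2(\R)$ yields $h_{n_j} \to h = e^{i\theta_0}\phi_0$ in $H^1(\R)$, as claimed.

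The main obstacle is precisely the mismatch between the space $X$ in which the variational problem is coercive and the space $H^1(\R)$ in which stability is measured; the whole content of the lemma is that adding back the conserved quantity $\|\cdot\|_{L^2}$ (available along the NLSDP flow) exactly fills this gap, through the weak-convergence-plus-norm-convergence criterion in the Hilbert space $L^2(\R)$.
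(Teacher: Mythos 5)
Your proof is correct and takes essentially the same route as the paper's: strong convergence in $X$ from Lemma \ref{lemeight}, weak convergence $h_{n_j} \rightharpoonup e^{i\theta_0}\phi_0$ in $L^2(\R)$, and the hypothesis $\|h_n\|_{L^2} \to \|\phi_0\|_{L^2}$ upgrading weak to strong $L^2$ convergence (the paper phrases this via weak lower semicontinuity of the norm, you via the Hilbert-space identity $\|h_{n_j}-h\|_{L^2}^2 = \|h_{n_j}\|_{L^2}^2 - 2\Re\langle h_{n_j},h\rangle + \|h\|_{L^2}^2$, which is the same Radon--Riesz mechanism). The only difference is that you explicitly justify the identification of the weak $L^2$ limit by testing against $C_c^\infty(\R)$ functions and using the strong $L^{p+1}$ convergence on compact intervals, a step the paper asserts without proof.
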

\begin{proof}
Lemmata \ref{lemseven} and \ref{lemeight} imply that $M_0 = \mathcal{A}_0 = \{e^{i \theta} \phi_0 \, : \, \theta \in \R\}$ and that $m_0 = E(\phi_0)$. In addition, Lemma \ref{lemeight} guarantees the existence of a subsequence $h_{n_j} \in H^1(\R)$ and of $\theta_0 \in \R$ such that $h_{n_j} \to e^{i \theta_0} \phi_0$ in $X$ as $n_j \to \infty$.  Moreover, $h_{n_j} \rightharpoonup e^{i \theta_0} \phi_0$ weakly in $L^2(\R)$. In view that, by hypothesis, $\|h_n\|_{L^2} \to \| \phi_0\|_{L^2}$, we obtain
\[
\| e^{i \theta_0} \phi_0 \|_{L^2} \leq \liminf_{n_j \to \infty} \| h_{n_j} \|_{L^2} = \| \phi_0\|_{L^2} = \| e^{i \theta_0} \phi_0 \|_{L^2},
\]
thanks to weakly lower semicontinuity of norm. Hence, $h_{n_j} \to e^{i \theta_0} \phi_0$ in $L^2(\R)$. Together with the results of Lemma \ref{lemeight}, this yields $h_{n_j} \to e^{i \theta_0} \phi_0$ in $H^1(\R)$.
\end{proof}

Finally, we are able to prove Theorem \ref{main2}.
\begin{proof}[Proof of Theorem \ref{main2}]
By contradiction, let us assume that the equilibrium solution $\phi_0$ is orbitally unstable in the space $H^1(\R)$. Then there exist $\epsilon_0 > 0$, a sequence $(h_n(t))$ of solutions to equation \eqref{deltasch1} and a sequence $t_n > 0$ such that
\begin{subequations}
\begin{align}
&\lim_{n\rightarrow\infty}\|h_n(0)-\phi_0 \|_{H^1}=0,\label{con0}\\
&\inf_{\theta\in\mathbb{R}}\|h_n(t_n)-e^{i\theta}\phi_0\|_{H^1}\geq \epsilon_0.\label{div0}
\end{align}
\end{subequations}
Since energy and charge are conserved by the flow of \eqref{deltasch1} we have that $E(h_n(t_n)) = E(h_n(0))$ for all $n \in \mathbb{N}$. Thus, \eqref{con0} implies that
\[
\lim_{n \to \infty} E(h_n(t_n)) = E(\phi_0) = m_0,
\]
and $\|h_n(t_n)\|_{L^2}=\|h_n(0)\|_{L^2}\to \|\phi_0\|_{L^2}$. Apply Lemmata \ref{lemeight} and \ref{lemnine} to deduce the existence of a subsequence $(h_{n_j})$ and of $\theta_0 \in \R$ such that
\[
\lim_{n_j \to \infty} h_{n_j}(t_{n_j}) = e^{i \theta_0} \phi_0, \qquad \text{in } \; H^1(\R).
\]
This is a contradiction with \eqref{div0} and we conclude that $\phi_0$ is orbitally stable in $H^1(\R)$.
\end{proof}

\section*{Acknowledgements}

This research was conducted while J. Angulo Pava and  C. Hern\' andez Melo were visiting the Instituto de Investigaciones en Matem\'aticas Aplicadas y en Sistemas (IIMAS), Universidad Nacional Aut\'{o}noma de M\'{e}xico, Cd. de M\'{e}xico (M\'exico). Also, they would like to thank to IIMAS by the support and the warm stay. J. Angulo  was partially  supported by Grant CNPq/Brazil.  R. G. Plaza was partially supported by DGAPA-UNAM, program PAPIIT, grant IN100318.

%
%



\end{document}